\newcommand{\NN}{{\mathbb N}}
\newcommand{\RR}{{\mathbb R}}
\newcommand{\abs}[1]{ \left| #1 \right|}
\newcommand{\del}{\partial}
\newcommand{\eps}{\varepsilon}
\newcommand{\nor}[2]{\left\|#1\right\|_{#2}}
\newcommand{\oline}[1]{\overline{#1}}
\newcommand{\oo}{\infty}
\newcommand{\pars}[1]{\left(#1\right)}
\newcommand{\tr}{\on{tr}}
\DeclareMathOperator*{\osc}{osc}
\newcommand{\mcl}{\mathcal}
\newcommand{\mbb}{\mathbb}
\newcommand{\mbf}{\mathbf}
\newcommand{\on}{\operatorname}
\newtheorem{lemma}{Lemma}
\newtheorem{theorem}{Theorem}
\newtheorem{definition}{Definition}
\numberwithin{equation}{section}
\numberwithin{lemma}{section}
\numberwithin{proposition}{section}
\numberwithin{theorem}{section}
\numberwithin{corollary}{section}
\numberwithin{definition}{section}
\begin{document}
\title{H\"older regularity of Hamilton-Jacobi equations with stochastic forcing}
\author{Pierre Cardaliaguet$^{1,2}$ and Benjamin Seeger$^{1,3,4}$}
\address{$^1$Universit\'e Paris-Dauphine \& Coll\`ege de France \\ Place du Mar\'echal de Lattre de Tassigny \\ 75016 Paris, France}
\email{$^2$cardaliaguet@ceremade.dauphine.fr, $^3$seeger@ceremade.dauphine.fr}

\thanks{$^4$Partially supported by the National Science Foundation Mathematical Sciences Postdoctoral Research Fellowship under Grant Number DMS-1902658
}

\subjclass[2010]{60H15, 35B65, 35G20}
\keywords{Hamilton-Jacobi equations, stochastic PDEs, H\"older regularity}

\date{\today}
\date{\today}

\maketitle

\begin{abstract}
	We obtain space-time H\"older regularity estimates for solutions of first- and second-order Hamilton-Jacobi equations perturbed with an additive stochastic forcing term. The bounds depend only on the growth of the Hamiltonian in the gradient and on the regularity of the stochastic coefficients, in a way that is invariant with respect to a hyperbolic scaling.
\end{abstract}

\section{Introduction}

The objective of this paper is to study the H\"older regularity of stochastically perturbed equations of the form
\begin{equation}\label{E:main1}
	du + H(Du,x,t)dt = f(x) \cdot dB
\end{equation}
and
\begin{equation}\label{E:main2}
	du + F(D^2 u, Du, x,t)dt = f(x) \cdot dB,
\end{equation}
where $H: \RR^d \times \RR^d \times \RR \to \RR$ and $F: \mbb S^d \times \RR^d \times \RR^d \times \RR\to \RR$ are coercive in $Du$, $F$ is degenerate elliptic in $D^2 u \in \mbb S^d$, $\mbb S^d$ is the space of symmetric $d \times d$ matrices, $f \in C_b^2(\RR^d,\RR^m)$, and $B$ is an $m$-dimensional Brownian motion defined over a fixed probability space $(\Omega, \mbf F, \mbf P)$.

More precisely, we are interested in the regularizing effect that comes about from the coercivity in the $Du$-variable. The goal is to show that bounded solutions of \eqref{E:main1} and \eqref{E:main2} are locally H\"older continuous with high probability, with a H\"older bound and exponent that are independent of the regularity of $H$ or $F$ in $(x,t)$, or the ellipticity in the $D^2 u$-variable. 

A major motivation for this paper is to study the average long-time, long-range behavior of solutions of \eqref{E:main1} and \eqref{E:main2} with the theory of homogenization. Specifically, if $u^\eps(x,t) := \eps u(x/\eps,t/\eps)$ for $\eps > 0$ and $(x,t) \in \RR^d \times \RR$, then $u^\eps$ solves
\begin{equation}\label{E:scaledmain1}
	du^\eps + H\pars{ Du^\eps, \frac{x}{\eps}, \frac{t}{\eps}}dt = \eps^{1/2} f\pars{\frac{x}{\eps}} \cdot dB^\eps
\end{equation}
or
\begin{equation}\label{E:scaledmain2}
	du^\eps + F\pars{  \eps D^2 u^\eps , Du^\eps, \frac{x}{\eps},\frac{t}{\eps} }dt = \eps^{1/2} f\pars{ \frac{x}{\eps}} \cdot dB^\eps,
\end{equation}
where $B^\eps(t) := \eps^{1/2}B(t/\eps)$ has the same law as $B$. Observe that the new coefficients
\[
	f^\eps(x) := \eps^{1/2} f(x/\eps),
\]
which are required to be continuously differentiable in order to make sense of the equation (twice in the case of \eqref{E:scaledmain2}), blow up in $C^1(\RR^d,\RR^m)$ and $C^2(\RR^d,\RR^m)$ as $\eps \to 0$. A major contribution of this paper is to obtain estimates that, although they depend on $\nor{Df}{\oo}$ and $\nor{D^2 f}{\oo}$, are bounded independently of $\eps$, and, in fact, the probability tails of the H\"older semi-norms converge to $0$ as $\eps \to 0$.

\subsection{Main results}
We give two types of results, for both first and second order equations. The first is an interior H\"older estimate for bounded solutions on space-time cylinders. We then use this result to prove an instantaneous H\"older regularization effect for initial value problems with bounded initial data.

For $u$ defined on the cylinder
\[
	Q_1 := \oline{ B_1} \times [-1,0] := \left\{ (x,t) \in \RR^d \times \RR : |x| \le 1, \; -1 \le t \le 0 \right\},
\]
we show that $u$ is H\"older continuous on the cylinder $B_{1/2} \times [-1/2,0]$, given that $u$ is a solution of the appropriate equation, and is nonnegative and has a random upper bound, that is, for some $\mcl S : \Omega \to [0,\oo)$,
\begin{equation}\label{A:introrandombound}
	0 \le u \le \mcl S \quad \text{in } Q_1.
\end{equation}

\begin{theorem}\label{T:introfirstorder}
	Assume, for some $A > 1$, $q > 1$, and $K > 0$, that
	\begin{equation}\label{A:Hsuperlinear}
		\frac{1}{A} |p|^q - A \le H(p,x,t) \le A|p|^q + A \quad \text{for all } (p,x,t) \in \RR^d \times \RR^d \times [-1,0],
	\end{equation}
	\begin{equation}\label{A:fC1bound}
		f \in C^1(\RR^d,\RR^m), \quad \nor{f}{\oo} + \nor{f}{\oo} \cdot \nor{Df}{\oo} \le K,
	\end{equation}
	and $u$ solves \eqref{E:main1} in $Q_1$ and satisfies \eqref{A:introrandombound}. Fix $M > 0$ and $p \ge 1$. Then there exist $\alpha = \alpha(A,q) > 0$, $\sigma = \sigma(A,q) > 0$, $\lambda_0 = \lambda_0(A,K,M,q) > 0$, and $C = C(A,K,M,p,q) > 0$ such that, for all $\lambda \ge \lambda_0$,
	\[
		\mbf P\pars{ \sup_{(x,t), (\tilde x,\tilde t) \in B_{1/2} \times [-1/2,0]} \frac{ |u(x,t) - u(\tilde x, \tilde t)|}{|x - \tilde x|^\alpha + |t - \tilde t|^{\alpha/(q - \alpha(q-1))}} > \lambda} \le \mbf P \pars{ (\mcl S - M)_+ > \lambda^{\sigma}} + \frac{C \nor{f}{\oo}^p}{\lambda^{\sigma p}}.
	\]
\end{theorem}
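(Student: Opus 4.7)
My strategy has three parts: reduce \eqref{E:main1} to a pathwise deterministic PDE by subtracting a stochastic corrector; prove a purely PDE-level interior H\"older estimate with exponent depending only on $A$ and $q$; and assemble the tail bound from moment estimates on the corrector.

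\textbf{Pathwise reduction and moment bounds.} I would set $N(x,t) := \int_{-1}^t f(x) \cdot dB(s)$ on $Q_1$ and $v := u - N$, so that pathwise $v$ is a viscosity solution of
\[
	v_t + \tilde H(Dv, x, t) = 0 \quad \text{in } Q_1, \qquad \tilde H(p,x,t) := H(p + DN(x,t), x, t).
\]
The modified Hamiltonian $\tilde H$ retains $q$-superlinear growth with constants depending on $A$ and $\nor{DN}{\oo}^q$, but is otherwise deterministic for each $\omega$. Applying Burkholder-Davis-Gundy together with Kolmogorov's continuity theorem, and using the hypothesis $\nor{f}{\oo}\nor{Df}{\oo} \le K$, I obtain, for every $p \ge 1$, a constant $C_p = C_p(K, d)$ such that
\[
	\EE\left[ \nor{N}{\oo}^p + \nor{DN}{\oo}^p \right] \le C_p \nor{f}{\oo}^p,
\]
and similar pathwise control on the Lipschitz-in-$x$ and $(1/2-\delta)$-H\"older-in-$t$ semi-norms of $N$.

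\textbf{Deterministic H\"older estimate.} The core analytic ingredient is the following deterministic statement: there exist $\alpha = \alpha(A,q) \in (0,1)$ and $\beta = \beta(A,q) > 0$ such that every viscosity solution $w$ of $w_t + \hat H(Dw, x, t) = 0$ on $Q_1$ with
\[
	\tfrac{1}{A} |p|^q - A(1+R^q) \le \hat H(p,x,t) \le A|p|^q + A(1+R^q)
\]
and $\nor{w}{\oo,Q_1} \le W$ is $\alpha$-H\"older in $x$ and $\alpha/(q-\alpha(q-1))$-H\"older in $t$ on $B_{1/2} \times [-1/2,0]$ with semi-norm at most $C(A,q)\, (R + W + 1)^\beta$. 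The time exponent $q - \alpha(q-1)$ is precisely the hyperbolic-scaling exponent under which the coercive leading term $|Dw|^q$ is invariant, $(x,t,w) \mapsto (rx, r^{q-\alpha(q-1)}t, r^\alpha w)$. I would prove this by an iterated decay-of-oscillation argument on rescaled cylinders $B_r \times [-r^{q-\alpha(q-1)}, 0]$: coercivity is used to build Barenblatt-type barriers that yield $\osc_{Q_{1/2}} w \le \theta \osc_{Q_1} w + C$ for some $\theta = \theta(A, q) < 1$, and iteration yields the H\"older bound.

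\textbf{Assembly and main obstacle.} Fix $M > 0$ and $p \ge 1$. On the favorable event $E_\lambda := \{(\mcl S - M)_+ \le \lambda^\sigma\} \cap \{\nor{N}{\oo} + \nor{DN}{\oo} \le \lambda^\sigma\}$, the deterministic estimate applies to $v$ with $R, W \lesssim \lambda^\sigma + M$, giving a polynomial-in-$\lambda$ bound on the H\"older semi-norm of $v$; adding back $N$ (whose pathwise H\"older norm is also polynomial in $\lambda$ on $E_\lambda$) yields the same for $u$. Choosing $\sigma = \sigma(A, q)$ small enough so that this polynomial bound is $\le \lambda$ for $\lambda \ge \lambda_0(A, K, M, q)$, the theorem follows from Markov's inequality:
\[
	\mbf P(E_\lambda^c) \le \mbf P\pars{ (\mcl S - M)_+ > \lambda^\sigma } + \frac{C_p \nor{f}{\oo}^p}{\lambda^{\sigma p}}.
\]
The delicate point is the deterministic step, specifically that the exponent $\alpha$ depends \emph{only} on $A$ and $q$ and not on the drift size $R$; a naive Ishii-Lions doubling-of-variables approach produces an $\alpha$ that degrades as $R \to \oo$, which would be fatal for the homogenization application ($R \to \oo$ as $\eps \to 0$). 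The way around this is to exploit the hyperbolic-scaling invariance of the coercive leading term: the drift contribution scales like $r^{1-\alpha}$ relative to $|Dw|^q$ and is therefore subcritical at small scales, so after a finite number of pre-shrinking steps the oscillation-decay iteration can proceed with a geometric rate independent of $R$.
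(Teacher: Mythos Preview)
Your moment bound $\EE[\nor{DN}{\oo}^p] \le C_p(K,d)\,\nor{f}{\oo}^p$ is false, and this is fatal. Since $N(x,t)=f(x)\cdot(B_t-B_{-1})$, one has $\nor{DN}{L^\oo(Q_1)}=\nor{Df}{\oo}\sup_{t\in[-1,0]}|B_t-B_{-1}|$, so $\EE\nor{DN}{\oo}^p\asymp\nor{Df}{\oo}^p$. The hypothesis $\nor{f}{\oo}\nor{Df}{\oo}\le K$ only gives $\nor{Df}{\oo}\le K/\nor{f}{\oo}$, which goes the wrong way. Concretely, for $f(x)=\eps^{1/2}\sin(x/\eps)$ one has $\nor{f}{\oo}=\eps^{1/2}$, $\nor{Df}{\oo}=\eps^{-1/2}$, $K\le 2$, yet $\EE\nor{DN}{\oo}^p\sim\eps^{-p/2}$ while your claimed bound is $\eps^{p/2}$. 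Consequently your assembly step produces a tail $C\nor{Df}{\oo}^p/\lambda^{\sigma p}$ rather than $C\nor{f}{\oo}^p/\lambda^{\sigma p}$; since $\nor{Df}{\oo}$ is not controlled by $K$ alone, this cannot be absorbed into $C=C(A,K,M,p,q)$, and under the hyperbolic rescaling $f\mapsto f^\eps$ it blows up instead of vanishing. This is exactly the obstruction the paper names in the introduction when it says the naive transformation ``does not immediately yield scale-invariant estimates.''

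The paper's argument is structurally different: it never estimates $\nor{DN}{\oo}$. It keeps the stochastic forcing in the equation and proves oscillation decay directly for the stochastic problem, using the Hopf--Lax representation together with a uniform pathwise bound on the stochastic integrals along curves. The key lemma produces a random variable $\mcl D$ with $\mbf P(\mcl D>\lambda)\le C\nor{f}{\oo}^p/\lambda^p$ (here $C$ depends on $K$) such that, for \emph{every} Lipschitz $\gamma$ and every $\delta\in(0,1]$,
\[
\Bigl|\int_s^t f(\gamma_r)\cdot dB_r\Bigr|\le \delta^{q'}\!\int_s^t|\dot\gamma_r|^{q'}\,dr+\frac{\mcl D}{\delta^q}(t-s)^\kappa.
\]
The first term is absorbed into the running cost $H^*(-\dot\gamma)$ at the price of replacing $A$ by $2A$; the second replaces the forcing by a random additive constant whose tail carries the correct $\nor{f}{\oo}$ scaling. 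The improvement-of-oscillation iteration is then run starting from a random scale $\rho$ determined by $\mcl S$ and $\mcl D$. The product $\nor{f}{\oo}\nor{Df}{\oo}$ enters only through the constant in the tail of $\mcl D$, via a chaining argument that partitions $[s,t]$ into pieces of length $h\sim(\nor{f}{\oo}/\nor{Df}{\oo})(t-s)^{1/q'}$; this is what makes the product, rather than $\nor{Df}{\oo}$ by itself, the relevant quantity. Your subcriticality observation about the additive drift under hyperbolic scaling is correct and is indeed used, but it must be applied to the random constant coming from $\mcl D$, not to $\nor{DN}{\oo}$.
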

To state the assumptions for the regularity results for \eqref{E:main2}, we introduce the notation, for any $X \in \mbb S^d$,
\[
	m_+(X) := \max_{|v| \le 1} v \cdot Xv \quad \text{and} \quad m_-(X) := \min_{|v| \le 1} v \cdot Xv.
\]
That is, $m_+(X)$ and $m_-(X)$ are, respectively, the largest nonnegative and lowest nonpositive eigenvalue of $X$. Note that, if $F: \mbb S^d \to \RR$ is uniformly continuous and degenerate elliptic, then, for some constants $\nu > 0$ and $A > 0$ and for all $X \in \mbb S^d$,
\[
	- \nu m_+(X) - A \le F(X) \le - \nu m_-(X) + A.
\]
In order for the coercivity in the gradient to dominate the second-order dependence of $F$ at small scales, it is necessary to assume that the growth of $F$ in $Du$ is super-quadratic.

\begin{theorem}\label{T:introsecondorder}
	Assume that, for some $A > 1$, $q > 2$, $\nu > 0$, and $K > 0$,
	\begin{equation}\label{A:Fsuperquadratic}
		\left\{
		\begin{split}
		&- \nu m_+(X) + \frac{1}{A} |p|^q - A \le F(X,p,x,t) \le -\nu m_-(X) + A|p|^q + A\\
		&\text{for all } (X,p,x,t) \in \mbb S^d \times \RR^d \times \RR^d \times [-1,0],
		\end{split}
		\right.
	\end{equation}
	\begin{equation}\label{A:fC2bound}
		f \in C^2(\RR^d,\RR^m), \quad \nu + \nor{f}{\oo} + \nor{f}{\oo} \cdot \nor{Df}{\oo} + \nu \nor{f}{\oo} \nor{D^2 f}{\oo} \le K,
	\end{equation}
	and $u$ solves \eqref{E:main2} in $Q_1$ and satisfies \eqref{A:introrandombound}. Fix $M > 0$ and $p \ge 1$. Then there exist $\alpha = \alpha(A,q) > 0$, $\sigma = \sigma(A,q) > 0$, $\lambda_0 = \lambda_0(A,K,M,q) > 0$, and $C = C(A,K,M,p,q) > 0$ such that, for all $\lambda \ge \lambda_0$,
	\[
		\mbf P\pars{ \sup_{(x,t), (\tilde x,\tilde t) \in B_{1/2} \times [-1/2,0]} \frac{ |u(x,t) - u(\tilde x, \tilde t)|}{|x - \tilde x|^\alpha + |t - \tilde t|^{\alpha/(q - \alpha(q-1))}} > \lambda} \le \mbf P \pars{ (\mcl S - M)_+ > \lambda^{\sigma}} + \frac{C \nor{f}{\oo}^p}{\lambda^{\sigma p}}.
	\]
\end{theorem}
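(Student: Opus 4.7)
The plan is to adapt the argument for Theorem \ref{T:introfirstorder} by first absorbing the stochastic forcing into a change of unknown, then applying a deterministic interior Hölder estimate, and finally combining the two with standard Brownian motion tail bounds. Because the noise in \eqref{E:main2} is additive and independent of $u$ and $Du$, setting $v(x,t) := u(x,t) - f(x) \cdot B(t)$ produces, pathwise, the degenerate parabolic equation
\[
v_t + \tilde F_\omega(D^2 v, Dv, x, t) = 0 \quad \text{in } Q_1,
\]
where $\tilde F_\omega(X, p, x, t) := F(X + D^2 f(x) B(t,\omega), p + Df(x) B(t,\omega), x, t)$. On the event $E_\Lambda := \{\sup_{[-1,0]} |B| \le \Lambda\}$, assumptions \eqref{A:Fsuperquadratic} and \eqref{A:fC2bound} ensure that $\tilde F_\omega$ retains $\nu$-degenerate ellipticity and super-quadratic $q$-coercivity in $p$ (with constants depending additionally on $\Lambda$), while $\nor{v}{L^\infty(Q_1)} \le \mcl S + \nor{f}{\oo} \Lambda$.

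The second step invokes the deterministic a priori interior Hölder estimate for bounded viscosity solutions of degenerate parabolic HJ equations with super-quadratic coercivity, which is the central analytic contribution of the paper and whose invariance is captured by the hyperbolic scaling $|t| \sim |x|^{q - \alpha(q-1)}$. Applied pathwise to $v$ on $E_\Lambda$ it yields, on $B_{1/2} \times [-1/2, 0]$,
\[
\sup_{(x,t) \ne (\tilde x, \tilde t)} \frac{|v(x,t) - v(\tilde x, \tilde t)|}{|x - \tilde x|^\alpha + |t - \tilde t|^{\alpha/(q - \alpha(q-1))}} \le C(1 + \mcl S + \nor{f}{\oo} \Lambda)^\beta,
\]
for constants $\alpha, \beta, C$ depending only on $A, q, K$, and $\Lambda$. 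Writing $u - u(\tilde x, \tilde t) = (v - v(\tilde x, \tilde t)) + (f(x) - f(\tilde x)) B(t) + f(\tilde x)(B(t) - B(\tilde t))$, the spatial contribution is controlled by $\nor{Df}{\oo} \Lambda |x - \tilde x|$ on $E_\Lambda$, and Kolmogorov's continuity theorem controls the temporal contribution by $\nor{f}{\oo} [B]_{C^{1/2 - \eta}} |t - \tilde t|^{1/2 - \eta}$, which is of higher order than $|t - \tilde t|^{\alpha/(q - \alpha(q-1))}$ once $\alpha$ is small enough. Standard Gaussian and Kolmogorov tail estimates give $\mbf P(E_\Lambda^c) + \mbf P([B]_{C^{1/2 - \eta}} > \Lambda) \le C e^{-c \Lambda^2}$; optimizing $\Lambda = \lambda^{\sigma'}$ and combining with the assumed tail on $\mcl S$ yields the announced inequality.

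The main obstacle is the deterministic Hölder estimate used in the second step: the degenerate second-order term must not destroy the scaling-invariant Hölder regularity available for first-order coercive HJ equations. The hypothesis $q > 2$ is crucial here, since under the hyperbolic rescaling $u^r(x,t) := r^{-\alpha} u(rx, r^{q - \alpha(q-1)} t)$ the Hessian term acquires a prefactor $r^{2 - q - \alpha}$ which vanishes as $r \to 0$, rendering the equation at small scales a vanishing-viscosity perturbation of a first-order coercive equation. Carrying out this small-scale comparison rigorously, for instance via sup- and inf-convolutions, weak Bernstein estimates, or an improvement-of-oscillation argument adapted from the first-order case, is where the principal technical effort lies.
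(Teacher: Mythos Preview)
Your approach has a genuine gap that the paper explicitly addresses in its Background subsection. The substitution $v = u - f(x)\cdot B(t)$ produces the transformed operator
\[
\tilde F_\omega(X,p,x,t) = F\bigl(X + D^2f(x)\,B(t),\, p + Df(x)\,B(t),\, x,t\bigr),
\]
whose effective coercivity constant $A'$ on the event $E_\Lambda$ is of order $A + \nu\nor{D^2f}{\oo}\Lambda + (\nor{Df}{\oo}\Lambda)^q$. The hypothesis \eqref{A:fC2bound} bounds only the \emph{scale-invariant} combinations $\nor{f}{\oo}\nor{Df}{\oo}$ and $\nu\nor{f}{\oo}\nor{D^2f}{\oo}$, not $\nor{Df}{\oo}$ or $\nor{D^2f}{\oo}$ separately; these can be arbitrarily large when $\nor{f}{\oo}$ is small. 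Consequently the Cardaliaguet--Silvestre interior estimate you invoke yields a H\"older exponent $\alpha(A',q)$ and constants that depend on $\nor{Df}{\oo}$, $\nor{D^2f}{\oo}$ rather than on $K$, so you cannot obtain $\alpha=\alpha(A,q)$ or $C=C(A,K,M,p,q)$ as required. You in fact concede that $\alpha$ depends on $\Lambda$, which is then incompatible with optimizing $\Lambda=\lambda^{\sigma'}$: the exponent in the statement must be fixed before $\lambda$ varies. The paper points out precisely this failure of scale invariance for the naive transformation.

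The paper's proof avoids this by never reducing to a deterministic estimate on all of $Q_1$. Instead it runs the improvement-of-oscillation argument directly on the stochastic equation: at each dyadic scale it compares the (rescaled) solution with the solutions $w_\pm$ of \emph{homogeneous} equations via Lemma~\ref{L:Ebarrier}, whose error term $A\mcl E\,\eps_2^{q'-1+\kappa}/\eps_1^{q'}$ involves only the scale-invariant quantities in \eqref{A:fC2bound}. This control comes from the optimal control / differential games representation (Lemma~\ref{L:HJBformula}) together with the uniform bound on the pathwise stochastic integrals (Lemma~\ref{L:intcontrol2}), which absorbs $\int f(X_r)\,dB_r$ into a small multiple of the running cost $\int|\alpha_r|^{q'}dr$ plus a term with the correct $\nor{f}{\oo}^p/\lambda^p$ tail. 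The barriers of Lemma~\ref{L:CSbarriers} then close the oscillation decay exactly as in Cardaliaguet--Silvestre, but with the forcing already removed at each scale. This is the mechanism that keeps $\alpha$ depending only on $(A,q)$ and produces the $\nor{f}{\oo}^p$ factor in the tail.
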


Although the bounds in Theorem \ref{T:introfirstorder} and \ref{T:introsecondorder} do depend on the regularity of $f$, the important point is that the dependence is scale-invariant. Indeed, the function $f^\eps$ defined by $f^\eps(x) := \eps^{1/2} f(x/\eps)$ satisfies
\[
	\nor{f^\eps}{\oo} = \eps^{1/2} \nor{f}{\oo}, \quad \nor{Df^\eps}{\oo} := \frac{1}{\eps^{1/2}} \nor{Df}{\oo}, \quad \text{and} \quad \nor{D^2 f^\eps}{\oo} = \frac{1}{\eps^{3/2}} \nor{D^2 f}{\oo}.
\]
As a consequence, $f^\eps$ satisfies \eqref{A:fC1bound} and \eqref{A:fC2bound} with some $K > 0$ independent of $\eps$ (the latter because, in \eqref{A:Fsuperquadratic}, $\nu$ is replaced with $\eps \nu$). This leads to the following scale-invariant estimates for the regularizing effect of \eqref{E:scaledmain1} and \eqref{E:scaledmain2}.
 
\begin{theorem}\label{T:introfirstorderscaling}
	For $A > 1$, $M > 0$, and $q > 1$, assume that
	\[
		\frac{1}{A}|p|^q - A \le H(p,x,t) \le A|p|^q + A
	\]
	and $f \in C^1_b(\RR^d,\RR^m)$, and, for $0 < \eps < 1$, let $u^\eps$ be the solution of \eqref{E:scaledmain1} with $\nor{u^\eps(\cdot,0)}{\oo} \le M$. Fix $\tau > 0$, $R > 0$, and $T > 0$. Then there exist $C = C(R,\tau,T,A, \nor{f}{C^1},M, q) > 0$, $\alpha = \alpha(A,q) > 0$, and $\sigma = \sigma(A,q) > 0$ such that, for all $\lambda > 0$,
	\[
		\mbf P \pars{ \sup_{(x,t), (\tilde x,\tilde t) \in B_R \times [\tau,T]} \frac{|u^\eps(x,t) - u^\eps(\tilde x, \tilde t)|}{|x - \tilde x|^\alpha + |t - \tilde t|^{\alpha/(q - \alpha(q-1))} } > C + \lambda} \le \frac{C\eps^{p/2}}{\lambda^{\sigma p}}.
	\]
\end{theorem}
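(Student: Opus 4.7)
The plan is to apply Theorem \ref{T:introfirstorder} pathwise on a finite covering of $B_R \times [\tau,T]$ by unit cylinders $Q_i := (x_i,t_i) + Q_1 \subset B_{R+1} \times [\tau/2, T+1]$, and to combine the resulting local estimates by a union bound. The crucial observation, already emphasized in the text preceding the statement, is that $f^\eps(x) := \eps^{1/2} f(x/\eps)$ satisfies \eqref{A:fC1bound} with a constant $K$ \emph{independent of $\eps$}, because $\nor{f^\eps}{\oo}\nor{Df^\eps}{\oo} = \nor{f}{\oo}\nor{Df}{\oo}$; moreover $(p,x,t) \mapsto H(p, x/\eps, t/\eps)$ satisfies \eqref{A:Hsuperlinear} with the same $A$ and $q$. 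Thus Theorem \ref{T:introfirstorder} applies pathwise on each $Q_i$ after translating $u^\eps$ by $-\inf_{Q_i} u^\eps$ to make it non-negative.

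Before invoking the interior estimate, I would establish a random bound $\mcl S^\eps$ on $\osc_{Q_i} u^\eps$. The upper bound $u^\eps(x,t) \le M + A t + \phi^\eps(x,t)$ on $[0, T+1]$, with $\phi^\eps(x,t) := \int_0^t f^\eps(x) \cdot dB$, follows from a pathwise comparison using $H \ge -A$. The matching lower bound is obtained by considering the deterministic PDE satisfied pathwise by $w^\eps := u^\eps - \phi^\eps$ and exploiting the superlinear coercivity of $H$ together with the scale-invariant product $\nor{f^\eps}{\oo}\nor{Df^\eps}{\oo}$ from \eqref{A:fC1bound}. Together these yield a bound $\mcl S^\eps$ satisfying $\mbf P((\mcl S^\eps - M')_+ > \lambda^\sigma) \le C\eps^{p/2}/\lambda^{\sigma p}$ for an $\eps$-independent $M' = M'(R, \tau, T, A, \nor{f}{C^1}, M, q)$, using Gaussian tail bounds for Brownian maxima.

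With these ingredients, Theorem \ref{T:introfirstorder} applied on each $Q_i$ (with its $M$ parameter set to $M'$ and $\mcl S := \mcl S^\eps$) yields a local H\"older quotient bound whose failure probability is $\mbf P((\mcl S^\eps - M')_+ > \lambda^\sigma) + C\nor{f^\eps}{\oo}^p/\lambda^{\sigma p}$; both terms are $O(\eps^{p/2}/\lambda^{\sigma p})$ since $\nor{f^\eps}{\oo}^p = \eps^{p/2}\nor{f}{\oo}^p$. A union bound over the $O(R^d T/\tau)$ cylinders and the standard procedure of assembling local H\"older quotients into a global one on $B_R \times [\tau,T]$ each cost only a multiplicative constant, which together with the threshold $\lambda_0$ from Theorem \ref{T:introfirstorder} is absorbed into the additive constant $C$ appearing in the target inequality. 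The main obstacle is obtaining the lower bound on $u^\eps$ in a genuinely scale-invariant way: the naive approach of tracking $\min_x w^\eps$ through the PDE it satisfies introduces a factor $\nor{Df^\eps}{\oo}^q \sim \eps^{-q/2}$ that would destroy the scale invariance, and overcoming this requires a more careful pathwise argument that leverages the product constraint $\nor{f^\eps}{\oo}\nor{Df^\eps}{\oo} \le K$ rather than bounding the two factors separately.
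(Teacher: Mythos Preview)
Your overall strategy is correct and matches the paper's: exploit the scale invariance of $f^\eps$ in \eqref{A:fC1bound}, establish a random $L^\infty$ bound $\mcl S^\eps$ with $\eps$-independent deterministic part, then cover $B_R\times[\tau,T]$ by unit cylinders (after first rescaling to reduce to $\tau>1/2$, which you omit but is routine) and apply Theorem~\ref{T:introfirstorder} on each. Your elementary supersolution $M+At+\phi^\eps(x,t)$ does give the upper bound, since $H\ge -A$.

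The genuine gap is exactly where you flag it: the scale-invariant \emph{lower} bound on $u^\eps$. There is no elementary subsolution analogous to your supersolution, because any candidate must dominate $H(D\phi^\eps,\ldots)\sim\eps^{-q/2}$, and you do not actually supply the ``more careful pathwise argument'' you invoke---you only describe what it must accomplish. The paper's resolution is Lemma~\ref{L:Dbarrier}: rather than compare with an explicit barrier, one passes to the Hopf--Lax representation (Lemma~\ref{L:HJformula}) and controls the path integral $\int_s^t f^\eps(\gamma_r)\cdot dB_r$ uniformly over Lipschitz $\gamma$ via Lemma~\ref{L:intcontrol1}. Integration by parts turns this integral into boundary terms of order $\nor{f^\eps}{\oo}$ plus $\int_s^t \dot\gamma_r\cdot Df^\eps(\gamma_r)\,B_r\,dr$; Young's inequality then trades the latter for $\delta^{q'}\int|\dot\gamma_r|^{q'}dr$ (absorbed into the Lagrangian at the cost of doubling $A$) plus a random constant $\mcl D$ with $\mbf P(\mcl D>\lambda)\le C\nor{f^\eps}{\oo}^p/\lambda^p$. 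This is precisely where the product $\nor{f^\eps}{\oo}\nor{Df^\eps}{\oo}$ enters rather than the factors separately. Applying Lemma~\ref{L:Dbarrier} on successive unit time intervals yields both bounds $|u^\eps|\le\nor{u_0}{\oo}+A\sum_{k=1}^{\lceil T\rceil}\mcl D_k$ symmetrically, and the rest of your argument goes through.
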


\begin{theorem}\label{T:introsecondorderscaling}
	For $A > 1$, $\nu > 0$, $M > 0$, and $q > 2$, assume that
	\[
		-\nu m_+(X) + \frac{1}{A}|p|^q - A \le F(X,p,x,t) \le - \nu m_-(X) + A|p|^q + A
	\]
	and $f \in C^2_b(\RR^d,\RR^m)$, and, for $0 < \eps < 1$, let $u^\eps$ be the solution of \eqref{E:scaledmain2} with $\nor{u^\eps(\cdot,0)}{\oo} \le M$. Fix $\tau > 0$, $R > 0$, and $T > 0$. Then there exist $C = C(\nu,R,\tau,T,A, \nor{f}{C^2},M, q) > 0$, $\alpha = \alpha(A,q) > 0$, and $\sigma = \sigma(A,q) > 0$ such that, for all $\lambda > 0$,
	\[
		\mbf P \pars{ \sup_{(x,t), (\tilde x,\tilde t) \in B_R \times [\tau,T]} \frac{|u^\eps(x,t) - u^\eps(\tilde x, \tilde t)|}{|x - \tilde x|^\alpha + |t - \tilde t|^{\alpha/(q - \alpha(q-1))} } > C + \lambda} \le \frac{C\eps^{p/2}}{\lambda^{\sigma p}}.
	\]
\end{theorem}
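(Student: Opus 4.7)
I would deduce Theorem~\ref{T:introsecondorderscaling} from the local estimate of Theorem~\ref{T:introsecondorder}, combining the scale-invariance of the hypotheses noted in the introduction with an a priori $L^\infty$ bound on $u^\eps$. The first ingredient is the observation that the Hamiltonian $F^\eps(X,p,x,t) := F(\eps X, p, x/\eps, t/\eps)$ appearing in \eqref{E:scaledmain2} satisfies \eqref{A:Fsuperquadratic} with $\nu$ replaced by $\eps \nu$, while the rescaled forcing $f^\eps(x) := \eps^{1/2} f(x/\eps)$ satisfies \eqref{A:fC2bound} with a constant depending only on $\nu$ and $\nor{f}{C^2}$, not on $\eps$. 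The second ingredient is a bound of the form $|u^\eps(x,t)| \le M + At + |f^\eps(x) \cdot B^\eps(t)|$ on $\RR^d \times [0,T]$, which I would obtain by the standard device of setting $v^\eps := u^\eps - f^\eps(x) \cdot B^\eps(t)$: this substitution eliminates the stochastic forcing and yields a pathwise deterministic equation for $v^\eps$, to which comparison with the linear barriers $\pm(M + At)$ may be applied using the growth bounds on $F^\eps$.

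With these in hand, I would cover $B_R \times [\tau, T]$ by a finite family of cylinders $Q_\rho(x_0, t_0) := B_\rho(x_0) \times [t_0 - \rho, t_0]$ of radius $\rho := \min(\tau, 1)/2$, each contained in $\RR^d \times [0, T]$. For each such cylinder I would perform the hyperbolic rescaling $w(y,s) := \rho^{-1} u^\eps(x_0 + \rho y, t_0 + \rho s)$, producing a solution on $Q_1$ of an equation of the same form, now with effective ellipticity $\eps \rho^{-1} \nu$ and effective forcing whose $C^2$-norm still obeys \eqref{A:fC2bound} uniformly in $\eps$ (for $\eps \le \rho$; the finitely many values $\eps \ge \rho$ are absorbed into $C$). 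After shifting $w$ by its infimum to ensure nonnegativity---which preserves the equation, since additive constants are invariants---I would apply Theorem~\ref{T:introsecondorder} with $\mcl S := 2\rho^{-1}(M + AT + \nor{f^\eps}{\oo} \sup_{[0,T]} |B^\eps|)$ and with the threshold parameter $M$ there chosen to be its deterministic part $2\rho^{-1}(M + AT)$. Undoing the rescaling produces the Hölder estimate on $Q_\rho(x_0, t_0)$; the tail $\mbf P((\mcl S - M)_+ > \lambda^\sigma)$ is controlled by Markov's inequality applied to the $p$-th moment of $\nor{f^\eps}{\oo} \sup_{[0,T]} |B^\eps|$, which yields a bound of order $\eps^{p/2}/\lambda^{\sigma p}$, and this combines with the $C \nor{f^\eps}{\oo}^p / \lambda^{\sigma p} = C \eps^{p/2} \nor{f}{\oo}^p / \lambda^{\sigma p}$ contribution from the local theorem to give the advertised estimate after absorbing $\lambda_0$ into the additive $C$ on the left.

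The main obstacle I foresee is the a priori $L^\infty$ step: the pathwise equation for $v^\eps$ involves Hamiltonian arguments $Dv^\eps + (Df^\eps) B^\eps$ and $\eps D^2 v^\eps + \eps (D^2 f^\eps) B^\eps$ whose unperturbed bounds blow up like $\eps^{-1/2}$ and $\eps^{-3/2}$ respectively. Securing a bound for $v^\eps$ with scale-invariant dependence on the data requires exploiting the super-quadratic growth of $F$ in $p$ to dominate the gradient perturbation and the smallness of $\eps \nu$ to dominate the Hessian perturbation---precisely the balance encoded in \eqref{A:fC2bound}. Once this delicate pathwise estimate is in place, the remainder reduces to careful bookkeeping with the scaling and Gaussian tails.
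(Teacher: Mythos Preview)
Your overall plan---reduce to the local estimate of Theorem~\ref{T:introsecondorder} via scale-invariance and an $L^\infty$ bound, then cover $B_R\times[\tau,T]$ by cylinders---matches the paper's, and the covering/rescaling step is essentially the same as the paper's reduction to $\tau>1/2$. The genuine difficulty lies exactly where you flagged it: the $L^\infty$ bound. Unfortunately the resolution you propose does not work.

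Concretely, for the linear barrier $w=M+At$ to be a supersolution of the transformed equation for $v^\eps$, you need
\[
A + F\bigl(\eps D^2 f^\eps(x)\,B^\eps(t),\;Df^\eps(x)\,B^\eps(t),\;x/\eps,\;t/\eps\bigr)\ge 0.
\]
Using the lower growth bound on $F$ and computing $\eps D^2 f^\eps=\eps^{-1/2}D^2 f(\cdot/\eps)$, $Df^\eps=\eps^{-1/2}Df(\cdot/\eps)$, this reduces to
\[
\tfrac{1}{A}\,\eps^{-q/2}\,\bigl|Df(x/\eps)\,B^\eps(t)\bigr|^q \;\ge\; \nu\,\eps^{-1/2}\,m_+\bigl(D^2 f(x/\eps)\,B^\eps(t)\bigr),
\]
which fails at any point where $Df(x/\eps)\,B^\eps(t)=0$ while $D^2 f(x/\eps)\,B^\eps(t)$ has a positive eigenvalue. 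The super-quadratic exponent cannot help at such points, and the right-hand side, of order $\eps^{-1/2}$, is not controlled by the scale-invariant quantity $\nu\|f\|_\infty\|D^2f\|_\infty$ appearing in \eqref{A:fC2bound} (the relevant product here is $\nu\|D^2f\|_\infty$, which is not what \eqref{A:fC2bound} bounds). So the bound $|v^\eps|\le M+At$ is not available with $\eps$-independent constants, and the claimed inequality $|u^\eps|\le M+At+|f^\eps(x)\cdot B^\eps(t)|$ is unsupported.

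The paper avoids this by never passing to the transformed equation for the $L^\infty$ step. Instead it applies Lemma~\ref{L:Ebarrier} (with $\eps_1=\eps_2=1$, $\mcl C=\RR^d$) iteratively on unit time intervals: this compares $u^\eps$ directly with solutions $w_\pm$ of the \emph{homogeneous} equations $\partial_t w_\pm - \nu m_\pm(D^2 w_\pm)+(2A)^{\mp1}|Dw_\pm|^q=0$, at the cost of a random error $A\mcl E$ with tails $\mbf P(\mcl E>\lambda)\le C\|f^\eps\|_\infty^p/\lambda^p=C\eps^{p/2}\|f\|_\infty^p/\lambda^p$. The point is that $\mcl E$ is built from the representation formulae of Lemma~\ref{L:HJBformula} together with the stochastic-integral control of Lemma~\ref{L:intcontrol2}, which depends only on the scale-invariant combinations in \eqref{A:fC2bound}; it is \emph{not} expressible as a simple function of $\sup|B^\eps|$. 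Summing over $\lceil T\rceil$ intervals gives $0\le u^\eps+\text{const.}\le \mcl S$ with $(\mcl S-M)_+$ having the required $\eps^{p/2}/\lambda^p$ tails, after which your covering argument and Theorem~\ref{T:introsecondorder} finish the proof exactly as you describe.
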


A natural question is whether the methods and results of this paper can be generalized to treat a fixed, deterministic path $B$ that is, say, $\kappa$-H\"older continuous for some $\kappa \in (0,1)$. We strongly suspect that Theorems \ref{T:introfirstorder} and \ref{T:introsecondorder} can be adapted to such a setting in a straightforward manner and, in this case, the bounds in \eqref{A:fC1bound} and \eqref{A:fC2bound} are replaced by, respectively,
\[
	\nor{f}{\oo} + \nor{f}{\oo}^\kappa \nor{Df}{\oo}^{1-\kappa} \le K
	\quad \text{and} \quad
	\nu + \nor{f}{\oo} + \nor{f}{\oo}^\kappa \nor{Df}{\oo}^{1-\kappa} + \nor{f}{\oo}^\kappa (\nu \nor{D^2 f}{\oo})^{1-\kappa} \le K,	
\]
with constants depending additionally on the H\"older semi-norm of $B$. However, our Theorems \ref{T:introfirstorderscaling} and \ref{T:introsecondorderscaling} regarding Brownian motion do not immediately follow from such a statement. Indeed, with probability one, Brownian paths are $\kappa$-H\"older continuous if and only if $\kappa < 1/2$. The function $f^\eps(x) = \eps^{1/2} f(x/\eps)$, which arises due to the hyperbolic scaling in \eqref{E:scaledmain1} and \eqref{E:scaledmain2} as well as the self-similarity of Brownian motion, then satisfies
\[
	\nor{f^\eps}{\oo}^{\kappa} \nor{Df^\eps}{\oo}^{1-\kappa} = \eps^{-(1/2 - \kappa)} \nor{f}{\oo} \nor{Df}{\oo}.
\]
This quantity blows up as $\eps \to 0$ if $\kappa < 1/2$. We therefore emphasize that the methods used to prove Theorems \ref{T:introfirstorder} - \ref{T:introsecondorderscaling} are really probabilistic in nature, and use features of Brownian paths beyond their almost-sure regularity, in particular, the independence and scaling properties of increments (see Lemmas \ref{L:intcontrol1} and \ref{L:intcontrol2} in the appendix).

\subsection{Background}
The regularizing effects of Hamilton-Jacobi-Bellman equations like
\begin{equation}\label{E:classical}
	\del_t u + F(D^2 u, Du, x,t) = 0
\end{equation}
have been studied by many authors, including Cardaliaguet \cite{C}, Cannarsa and Cardaliaguet \cite{CC}, and Cardalia\-guet and Silvestre \cite{CS}, Chan and Vasseur \cite{chan2015giorgi} and Stockols and Vasseur \cite{stokols2017giorgi}. In these works, under a coercivity assumption on $F$ in the gradient variable  (but no regularity condition on $F$), bounded solutions are seen to be H\"older continuous, with estimate and exponents depending only on the growth of the $F$ in $Du$. These results were used to obtain homogenization results for problems set on periodic or stationary-ergodic spatio-temporal media; see, for instance, Schwab \cite{S} and Jing, Souganidis, and Tran \cite{JST}.

The equations \eqref{E:main1} and \eqref{E:main2} do not fit into this framework, due to the singular term on the right-hand side, which is nowhere pointwise-defined. A simple transformation (see Definition \ref{D:solutions} below) leads to a random equation that is everywhere pointwise-defined of the form \eqref{E:classical}. More precisely, if $u$ solves \eqref{E:main2} and 
\[
	\tilde u(x,t) = u(x,t) - f(x) \cdot B(t),
\]
then
\[
	\del_t \tilde u + F(D^2 \tilde u + D^2 f(x) \cdot B(t), D \tilde u + Df(x) \cdot B(t), x,t) = 0.
\]
However, this strategy does not immediately yield scale-invariant estimates. Indeed, the transformed equation corresponding to \eqref{E:scaledmain2} is, for $\eps > 0$,
\[
	\del_t \tilde u^\eps + F \pars{ \eps D^2 \tilde u^\eps + \frac{1}{\eps^{1/2}} D^2 f\pars{ \frac{x}{\eps}} B^\eps(t), D \tilde u^\eps + \frac{1}{\eps^{1/2}} Df \pars{ \frac{x}{\eps}} B^\eps(t), \frac{x}{\eps}, \frac{t}{\eps} } = 0,
\]
for which the results in the above references yield estimates that depend on $\eps$.

These issues were considered by Seeger \cite{Shomog} for the equation \eqref{E:main1} with $H$ independent of $(x,t)$ and convex in $p$. In this paper, we further extend the regularity results from \cite{Shomog} to apply also to second-order equations and with more complicated $(x,t)$-dependence for $F$ and $H$.  To do so, we follow \cite{CS} and prove that the equations exhibit an improvement of oscillation effect at all sufficiently small scales, which is a consequence only of the structure of the equation. The main difference with \cite{CS} is the addition of the random forcing term $f(x)\cdot dB(t)$ which obliges to revisit the analysis of \cite{CS} in a substantial way. 

\subsection{Organization of the paper}
In Section \ref{S:prelim}, we discuss the notion of pathwise viscosity solutions of equations like \eqref{E:main1} and \eqref{E:main2}, and we present a number of lemmas needed throughout the paper. The interior estimates are proved in Sections \ref{S:firstorder} and \ref{S:secondorder}, and the results for initial value problems are presented in Section \ref{S:apps}. Finally, in Appendix \ref{S:intcontrol}, we prove some results on controlling certain stochastic integrals.

\subsection{Notation}

 If $a$ and $b$ are real numbers, then we set $a\vee b=\max \{a, b\}$, $a\wedge b=\min\{a,b\}$ and  denote by $\lceil a \rceil$  the smallest integer greater than or equal to $a$. We let $\mbb S^d$ be the set of  symmetric real matrices of size $d\times d$. We say that a map $F: \mbb S^d\to \RR$ is degenerate elliptic if, for $X,Y\in \mbb S^d$ with $X\leq Y$, we have $F(X)\geq F(Y)$. Given $H: \RR^d \to \RR$, $H^*$ is defined for $\alpha \in \RR^d$ by $H^*(\alpha)=\sup_{p\in \RR^d} \left\{ \alpha\cdot p -H(p) \right\}$. Given a subset $C$ of $\RR^d$ and $-\oo< t_0< t_1<\oo$, $\partial^*(C\times (t_0,t_1))$ denotes the parabolic boundary of $C\times (t_0,t_1)$, namely
$$
\partial^*(C\times (t_0,t_1))= (C\times \{t_0\})\cup (\partial C\times (t_0,t_1)).
$$
For an open domain $U \subset \RR^N$, $USC(U)$ (respectively $LSC(U)$) denotes the space of upper- (respectively lower-) semicontinuous functions on $U$, and $BUC(U)$ is the space of bounded and uniformly continuous functions on $U$. For a bounded function $u: U \to \RR$, we define $\osc_U u := \sup_U u - \inf_U u$.

\section{Preliminaries}\label{S:prelim}
\subsection{Pathwise viscosity solutions}
Fix   $-\oo < t_0 < t_1 < \oo$ and let $U \subset \RR^d\times (t_0,t_1)$ be an open set. For $\zeta \in C((t_0,t_1), \RR^m)$, a degenerate elliptic    $F\in C(\mbb S^d \times \RR^d \times U\times (t_0,t_1),\RR)$, and $f \in C^2(\RR^d,\RR^m)$, we discuss the meaning of viscosity sub- and super-solutions of the equation
\begin{equation}\label{E:pathwise}
	du + F(D^2 u, Du, x, t)dt = f(x) \cdot d\zeta, \quad (x,t) \in U.
\end{equation}
The general theory of pathwise viscosity solutions, initiated by Lions and Souganidis \cite{LS1,LS2,LS3,LS4,Snotes}, covers a wide variety of equations for which $f$ may also depend on $u$ or $Du$. In the case of \eqref{E:pathwise}, the theory is much more tractable, and solutions are defined through a simple transformation.

\begin{definition}\label{D:solutions}
	A function $u \in USC(U)$ (resp. $u \in LSC(U)$) is a sub- (resp. super-) solution of \eqref{E:pathwise} if the function $\tilde u$ defined, for $(x,t) \in U$, by
	\[
		\tilde u(x,t) = u(x,t) - f(x) \cdot \zeta(t)
	\]
	is a sub- (resp. super-) solution of the equation
	\[
		\del_t \tilde u + F(D^2 \tilde u + D^2 f(x) \zeta(t), D \tilde u + Df(x) \zeta(t), x,t) = 0, \quad (x,t) \in U.
	\]
	A solution $u \in C(U)$ is both a sub- and super-solution.
\end{definition}

We remark that, if $F$ is independent of $D^2 u$, then we may take $f \in C^1(\RR^d,\RR^m)$.

We will often denote the fact that $u$ is a sub- (resp. super-) solution of \eqref{E:pathwise}, by writing
\[
	du + F(D^2 u, Du, x,t) dt \le f(x) \cdot d\zeta \quad \pars{ \text{resp. } du + F(D^2 u, Du, x,t) dt \ge f(x) \cdot d\zeta }.
\]
At times, when it does not cause confusion, we also use the notation
\[
	\del_t u + F(D^2 u, Du, x, t) = f(x) \cdot \dot\zeta(t),
\]
even when $\zeta$ is not continuously differentiable. This will become particularly useful in proofs that involve scaling, in which case the argument of $\dot \zeta$ may change.

\subsection{Control and differential games formulae}\label{SS:formulae}

Just as for classical viscosity solutions, some equations allow for representation formulae with the use of the theories of optimal control or differential games. Before we explain this, we give meaning to certain pathwise integrals that come up in the formulae.

\begin{lemma}\label{L:intzeta1}
	Assume that $s < t$ and $f \in C^{0,1}([s,t],\RR^m)$. Then the map
	\[
		C^1([s,t],\RR^m) \ni \zeta \mapsto \int_s^t f(r)\cdot \dot \zeta(r)dr = \sum_{i=1}^m \int_s^t f^i(r) \cdot \dot \zeta^i(r)dr
	\]
	extends continuously to $\zeta \in C([s,t],\RR^m)$.
\end{lemma}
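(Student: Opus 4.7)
The plan is to exploit an integration by parts to move the derivative off of $\zeta$ and onto $f$, which is allowed since $f \in C^{0,1}([s,t],\RR^m)$ has an essentially bounded weak derivative $\dot f \in L^\infty([s,t],\RR^m)$. Concretely, for $\zeta \in C^1([s,t],\RR^m)$, I would write
\[
    \int_s^t f(r)\cdot \dot\zeta(r)\,dr \;=\; f(t)\cdot \zeta(t) - f(s)\cdot \zeta(s) - \int_s^t \dot f(r)\cdot \zeta(r)\,dr,
\]
where the last integral is well-defined in the Lebesgue sense for any $\zeta \in C([s,t],\RR^m)$ because $\dot f \in L^\infty$ and $\zeta$ is bounded. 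I would then \emph{define} the extended functional $\Phi:C([s,t],\RR^m)\to \RR$ by exactly this right-hand side.

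To check continuity on $C([s,t],\RR^m)$ with the sup norm, take $\zeta, \tilde\zeta \in C([s,t],\RR^m)$ and estimate
\[
    \bigl|\Phi(\zeta) - \Phi(\tilde\zeta)\bigr| \;\le\; \bigl(|f(t)| + |f(s)|\bigr)\,\nor{\zeta - \tilde\zeta}{\oo} + \nor{\dot f}{\oo}(t-s)\,\nor{\zeta - \tilde\zeta}{\oo},
\]
so $\Phi$ is Lipschitz on $C([s,t],\RR^m)$ with constant controlled by the $C^{0,1}$-norm of $f$ and $t-s$. Since $C^1([s,t],\RR^m)$ is dense in $C([s,t],\RR^m)$ with respect to the sup norm (mollification suffices), $\Phi$ is the unique continuous extension of the original linear functional.

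There is really no subtle obstacle here; the only point that needs care is that $f \in C^{0,1}$ only guarantees a weak (a.e.) derivative, so the integration-by-parts formula must be justified by approximation. The cleanest route is to mollify: let $f_\delta := \rho_\delta * f$ (extending $f$ to $\RR$ in some Lipschitz way, say by constants outside $[s,t]$), apply the classical integration-by-parts identity to $(f_\delta, \zeta)$ for $\zeta \in C^1$, and pass to the limit $\delta \to 0$ using $f_\delta \to f$ uniformly and $\dot f_\delta \to \dot f$ in every $L^p$ with $p < \oo$ (and a.e.), together with the uniform bound $\nor{\dot f_\delta}{\oo}\le \nor{\dot f}{\oo}$. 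This yields the integration-by-parts formula for the original $C^1$-functional, and the continuous extension follows as above.
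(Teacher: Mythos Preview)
Your proof is correct and follows exactly the same route as the paper: integrate by parts to rewrite the functional as $f(t)\cdot\zeta(t) - f(s)\cdot\zeta(s) - \int_s^t \dot f(r)\cdot\zeta(r)\,dr$, which is manifestly continuous in $\zeta \in C([s,t],\RR^m)$. The paper simply states the integration-by-parts identity and declares the result immediate; your additional justification of the identity for $f \in C^{0,1}$ and the explicit Lipschitz bound are fine but more than the paper supplies.
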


\begin{proof}
	The result is immediate upon integrating by parts, which yields, for $\zeta \in C^1([s,t],\RR^m)$,
	\[
		 \int_s^t f(r)\dot \zeta(r)dr = f(t) \zeta(t) - f(s) \zeta(s) - \int_s^t \dot f(r) \zeta(r)dr.
	\]
\end{proof}

\begin{lemma}\label{L:intzeta2}
	Assume that $s < t$, $f \in C^1_b(\RR^d,\RR^m)$, $W: [s,t] \times \mcl A \to \RR$ is a Brownian motion on some probability space $(\mcl A, \mcl F, \mbb P)$, $\alpha,\sigma: [s,t] \times \mcl A \to \RR^d$ are  bounded and progressively measurable with respect to the filtration of $W$, $\tau \in [s,t]$ is a $W$-stopping time, and
	\[
		dX_r = \alpha_r dr + \sigma_r dW \quad \text{for } r \in [s,t].
	\]
	Then the map
	\[
		C^1([s,t],\RR^m) \ni \zeta \mapsto \int_s^\tau f(X_r)\cdot \dot \zeta(r)dr = \sum_{i=1}^m \int_s^\tau f^i(X_r) \cdot \dot \zeta^i(r)dr \in L^2(\mcl A)
	\]
	extends continuously to $\zeta \in C([s,t],\RR^m)$, and, moreover,
	\begin{align*}
		\mbb E \left[ \int_s^\tau f(X_r)\cdot \dot \zeta(r)dr \right] &= \mbb E\left[ f(X_\tau)\cdot \zeta(\tau) - f(X_s) \cdot \zeta(s) \right] \\
		&- \mbb E\left[ \int_s^\tau \zeta(r) \cdot \pars{ Df(X_r) \cdot \alpha_r  + \frac{1}{2} \langle D^2 f(X_r) \sigma_r, \sigma_r \rangle }dr \right].
	\end{align*}
\end{lemma}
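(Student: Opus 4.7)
The natural strategy is to perform an It\^o integration by parts when $\zeta$ is smooth, and then extend by density. For $\zeta\in C^1([s,t],\RR^m)$, applying It\^o's formula componentwise to the product $f^i(X_r)\,\zeta^i(r)$ and using
\[
df^i(X_r)=Df^i(X_r)\cdot\alpha_r\,dr+Df^i(X_r)\cdot\sigma_r\,dW+\tfrac12\langle D^2f^i(X_r)\sigma_r,\sigma_r\rangle\,dr,
\]
then integrating up to the stopping time $\tau$, yields the almost-sure identity
\begin{align*}
\int_s^\tau f(X_r)\cdot\dot\zeta(r)\,dr
&= f(X_\tau)\cdot\zeta(\tau)-f(X_s)\cdot\zeta(s)\\
&\quad -\int_s^\tau \zeta(r)\cdot\pars{Df(X_r)\cdot\alpha_r+\tfrac12\langle D^2f(X_r)\sigma_r,\sigma_r\rangle}dr\\
&\quad -\int_s^\tau \zeta(r)\cdot\pars{Df(X_r)\cdot\sigma_r}\,dW.
\end{align*}

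The right-hand side remains well-defined for any $\zeta\in C([s,t],\RR^m)$: the boundary values are immediate, the Lebesgue integral is absolutely convergent by the uniform bounds on $f$, $Df$, $D^2f$, $\alpha$, and $\sigma$, and the It\^o integral has a bounded progressively measurable integrand. I would take this expression as the definition of the extended map. To verify $L^2(\mcl A)$-continuity, I approximate a given $\zeta\in C([s,t],\RR^m)$ uniformly by a sequence $\zeta_n\in C^1([s,t],\RR^m)$; the boundary and Lebesgue pieces then converge in $L^\infty(\mcl A)$ by the uniform bounds, while It\^o's isometry gives
\[
\EE\pars{\int_s^\tau (\zeta-\zeta_n)(r)\cdot(Df(X_r)\sigma_r)\,dW}^2\le (t-s)\,\nor{Df}{\oo}^2\nor{\sigma}{\oo}^2\nor{\zeta-\zeta_n}{\oo}^2,
\]
so the stochastic term also converges in $L^2(\mcl A)$.

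For the expectation formula, I rewrite the stopped It\^o integral as $\int_s^t \ind_{\{r\le\tau\}}\zeta(r)\cdot(Df(X_r)\sigma_r)\,dW$. Its integrand is bounded and progressively measurable, so this is a genuine martingale starting at $0$ and thus has zero expectation. Taking expectations in the displayed identity, then passing to the limit $\zeta_n\to\zeta$ using the $L^2(\mcl A)$-convergence established above, yields the claimed formula. The substantive step is the initial It\^o computation; everything else is standard $L^2$-approximation and the vanishing of a bounded martingale. The only real place to be careful is tracking the vector--matrix contractions across the $m$ components of $f$ and the $d$ components of $X$, but no genuine obstacle is expected beyond this bookkeeping.
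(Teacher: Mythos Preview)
Your proposal is correct and follows essentially the same approach as the paper: apply It\^o's formula to the product $f^i(X_r)\zeta^i(r)$ for smooth $\zeta$, use the It\^o isometry to get $L^2(\mcl A)$-continuity of the stochastic integral term (and hence the extension to continuous $\zeta$), and obtain the expectation formula from the martingale property of the stopped It\^o integral (the paper phrases this last step as an appeal to the optional stopping theorem). The only cosmetic difference is that you write the stopped integral with an explicit indicator, whereas the paper leaves this implicit.
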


\begin{proof}
	If $\zeta \in C^1([s,t],\RR^m)$, then It\^o's formula yields, for $i = 1,2,\ldots,m$,
	\begin{align*}
		d \left[ f^i(X_r)\cdot \zeta^i(r) \right] &= \left[ f^i(X_r) \dot \zeta^i(r) + Df^i(X_r) \cdot \alpha_r \zeta^i(r) + \frac{1}{2} \langle D^2 f^i(X_r) \sigma_r, \sigma_r \rangle \zeta^i(r) \right]dr \\
		&+ (Df^i(X_r) \cdot \sigma_r \zeta^i(r))dW_r,
	\end{align*}
	and so
	\begin{equation}\label{Ito}
		\begin{split}
		\int_s^\tau f^i(X_r) \dot \zeta^i(r)dr &= f^i(X_\tau)\zeta^i(\tau) - f^i(X_s) \zeta^i(s) - \int_s^\tau \zeta^i(r) \pars{ Df^i(X_r) \cdot \alpha_r  + \frac{1}{2} \langle D^2 f^i(X_r) \sigma_r, \sigma_r \rangle}dr \\
		&- \int_s^\tau \zeta^i(r) Df(X_r) \cdot \sigma_r dW_r.
		\end{split}
	\end{equation}
	The It\^o isometry property implies that
	\[
		L^2([s,t]) \ni \zeta^i \mapsto \int_s^\tau \zeta^i(r) Df(X_r) \cdot \sigma_r dW_r \in L^2(\mcl A)
	\]
	is continuous, and, in particular, the map extends to $\zeta^i \in C([s,t])$. The result follows from the fact that the other terms on the right-hand side of \eqref{Ito} are continuous with respect to $\zeta^i \in C([s,t])$. The final claim follows upon taking the expectation of both sides of \eqref{Ito} and appealing to the optional stopping theorem.
\end{proof}

For arbitrary continuous $\zeta$, we freely interchange notations such as
\[
	\int_s^t f_r \cdot d\zeta_r \quad \text{and} \quad \int_s^t f(r) \cdot \dot \zeta(r) dr.
\]
Throughout the paper, $\zeta$ is often taken to be a Brownian motion, defined on a probability space that is independent of $W$.

We now consider some equations for which sub- and super-solutions can be compared from above or below with particular formulae. For convenience, we write the equations backward in time.
%

\begin{lemma}\label{L:HJformula}
	Assume $\mcl C \subset \RR^d$ is open, $x_0 \in \mcl C$, $t_0 < t_1$, $U$ is an open domain containing $\oline{\mcl C} \times [t_1,t_0]$, $\zeta \in C(\RR,\RR^m)$, $f \in C^1(U)$, and $H: \RR^d \to \RR$ is convex and superlinear. Let $u \in C(U)$ be a pathwise viscosity sub- (resp. super-) solution, in the sense of Definition \ref{D:solutions}, of
	\[
		-du + H(Du)dt = f(x) \cdot d\zeta \quad \text{in } U.
	\]
	Then
	\[
		u(x_0,t_0) \le \; \pars{ \text{resp.} \ge } \; \inf \left\{ u(\gamma_\tau,\tau) + \int_{t_0}^{\tau} H^*(-\dot\gamma_r) dr + \int_{t_0}^{\tau} f(\gamma_r) \cdot d\zeta_r : \gamma \in W^{1,\oo}([t_0,t_1], \RR^d), \; \gamma_{t_0} = x_0  \right\},
	\]
	where, for fixed $\gamma \in W^{1,\oo}([t_0,t_1], \RR^d)$,
	\begin{equation}\label{stoppingtime1}
		\tau = \tau^\gamma := \inf\{ t \in (t_0,t_1] : \gamma_t \in \del \mcl C \}.
	\end{equation}
\end{lemma}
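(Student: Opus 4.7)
The plan is to remove the stochastic forcing by invoking Definition \ref{D:solutions}, thereby reducing to a classical Hamilton-Jacobi equation to which the standard optimal-control representation applies, and then to convert the resulting Lagrangian integral back into a $d\zeta$-integral via Lemma \ref{L:intzeta1}. Recasting the equation as $du + (-H)(Du)\,dt = (-f) \cdot d\zeta$ so as to fit the form handled by Definition \ref{D:solutions}, I set $\tilde u(x,t) := u(x,t) + f(x)\cdot \zeta(t)$ and deduce that $\tilde u$ is a (classical) viscosity sub- (resp. super-) solution of
\[
	-\partial_t \tilde u + \tilde H(D\tilde u, x, t) = 0 \quad \text{in } \mcl C \times (t_0, t_1), \qquad \tilde H(p, x, t) := H\bigl(p - Df(x)\zeta(t)\bigr).
\]
The transformed Hamiltonian $\tilde H$ is continuous in $(p,x,t)$, convex and superlinear in $p$ (inherited from $H$), and, on compact subsets of $U$, locally Lipschitz in $x$ uniformly for bounded $p$, thanks to the $C^1$ regularity of $f$ and the continuity of $\zeta$.

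Next, I invoke the standard calculus-of-variations representation for terminal-value problems on the cylinder $\mcl C \times [t_0, t_1]$ with exit time $\tau$ from $\mcl C$. In the sub-solution case this yields, for every $\gamma \in W^{1,\oo}([t_0, t_1], \RR^d)$ with $\gamma_{t_0} = x_0$,
\[
	\tilde u(x_0, t_0) \le \tilde u(\gamma_\tau, \tau) + \int_{t_0}^\tau \tilde H^*(-\dot\gamma_r, \gamma_r, r)\,dr,
\]
while the super-solution inequality follows by comparing $\tilde u$ with the value function
\[
	V(x,t) := \inf_\gamma\Bigl\{\tilde u(\gamma_\tau,\tau) + \int_t^\tau \tilde H^*(-\dot\gamma_r,\gamma_r,r)\,dr\Bigr\},
\]
which is a viscosity sub-solution of the transformed equation agreeing with $\tilde u$ on the parabolic boundary, so that the HJ comparison principle forces $V \le \tilde u$. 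A direct substitution $q = p - Df(x)\zeta(t)$ in the Legendre supremum gives
\[
	\tilde H^*(\alpha, x, t) = H^*(\alpha) + \alpha \cdot Df(x)\zeta(t).
\]

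To convert the extra term $-\int_{t_0}^\tau \dot\gamma_r \cdot Df(\gamma_r)\zeta(r)\,dr$ arising from $\tilde H^*$ back to a $d\zeta$-integral, I use that $r \mapsto f(\gamma_r)$ is Lipschitz and apply Lemma \ref{L:intzeta1}, which gives
\[
	\int_{t_0}^\tau \dot\gamma_r \cdot Df(\gamma_r)\zeta(r)\,dr = f(\gamma_\tau)\cdot\zeta(\tau) - f(x_0)\cdot\zeta(t_0) - \int_{t_0}^\tau f(\gamma_r) \cdot d\zeta_r.
\]
Substituting this identity and expanding $\tilde u = u + f\cdot\zeta$ on both sides of the sub-/super-solution inequality, the boundary contributions $f(\gamma_\tau)\zeta(\tau)$ and $f(x_0)\zeta(t_0)$ cancel exactly, leaving precisely the formula claimed in the lemma; taking the infimum over $\gamma$ finishes the sub-solution case, and the super-solution case was already expressed as an infimum via $V$. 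The main technical hurdle is the HJ comparison principle needed for the super-solution half, which reduces to a standard continuity-in-$x$ estimate for $\tilde H$ that follows from the $C^1$ regularity of $f$ on $\oline{\mcl C}$ together with the continuity of $\zeta$ on $[t_0,t_1]$.
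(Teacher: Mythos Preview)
Your proposal is correct and follows essentially the same route as the paper: transform via Definition \ref{D:solutions} to remove the forcing, invoke the classical optimal-control representation for the resulting Hamilton--Jacobi problem on $\mcl C \times [t_0,t_1]$ together with the comparison principle, and then integrate by parts (Lemma \ref{L:intzeta1}) to convert the $Df(\gamma_r)\zeta(r)$ term into the $d\zeta$-integral. The only cosmetic difference is that the paper compares $\tilde u$ directly with the value function $w$ in both the sub- and super-solution cases, whereas you handle the sub-solution case by a per-curve verification inequality before taking the infimum; these are equivalent.
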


\begin{proof}
	We prove the claim for sub-solutions, as it is identical for super-solutions. 
	
	Definition \ref{D:solutions} implies that if
	\[
		\tilde u(x,t) := u(x,t) + f(x) \cdot \zeta(t) \quad \text{for } (x,t) \in U,
	\]
	then $\tilde u$ is a sub-solution of the boundary-terminal-value problem
	\begin{equation}\label{E:BTVP}
		\begin{dcases}
		-\del_t \tilde u+ H(D \tilde u - Df(x) \cdot \zeta(t)) = 0 & \text{in } \mcl C \times [t_0,t_1) \text{ and}\\
		\tilde u(x,t) = u(x,t) + f(x) \cdot \zeta(t) & \text{if } t = t_1 \text{ or } x \in \del \mcl C.
		\end{dcases}
	\end{equation}
	The unique solution of \eqref{E:BTVP} (see \cite{L}) is given by
	\[
		w(x,t) = \inf \left\{ u(\gamma_\tau,\tau) + f(\gamma_\tau) \cdot \zeta(\tau) + \int_t^\tau \left[ H^*(-\dot \gamma_r) - \dot \gamma_r \cdot Df(\gamma_r) \cdot \zeta(r) \right]dr : \gamma \in W^{1,\oo}([t,t_1],\RR^d), \; \gamma_t = x\right\},
	\]
	where $\tau$ is as in \eqref{stoppingtime1}. Integrating by parts gives
	\[
		\int_t^\tau \dot \gamma_r \cdot Df(\gamma_r) \cdot \zeta(r)dr = f(\gamma_\tau) \zeta(\tau) - f(x) \zeta(t) - \int_t^\tau f(\gamma_r)\cdot d\zeta(r),
	\]
	and, hence,
	\[
		w(x,t) = f(x) \zeta(t) + \inf \left\{ u(\gamma_\tau,\tau) + \int_t^\tau H^*(-\dot \gamma_r) dr + \int_t^\tau f(\gamma_r) \cdot d\zeta(r) : \gamma \in W^{1,\oo}([t,t_1],\RR^d), \; \gamma_t = x \right\}.
	\]
	The result now follows because, by the comparison principle for \eqref{E:BTVP}, $\tilde u \le w$ on $\oline{\mcl C} \times [t_0,t_1]$.
\end{proof}

We next give formulae for solutions of some Hamilton-Jacobi-Bellman and Hamilton-Jacobi-Isaacs equations. 

For $-\oo < t_0 < t_1 < \oo$, assume that
\begin{equation}\label{W}
	W: [t_0,t_1] \times \mcl A \to \RR \text{ is a Brownian motion defined on a probability space } (\mcl A, \mcl F, \mbb P),
\end{equation}
 with associated expectation $\mbb E$, and define the spaces of admissible controls
\begin{align*}
	&\mathscr C := \left\{ \mu \in L^\oo\pars{ [t_0,t_1] \times \mcl A,\RR^d } : \mu \text{ is adapted with respect to $W$}\right\} \text{ and}\\
	&\mathscr C_M := \left\{ \mu \in \mathscr C : \nor{\mu}{\oo} \le M \right\}.
\end{align*}
The Isaacs' equations require us to use the spaces of strategies defined by
\begin{align*}
	&\mathscr S := \left\{ \beta: \mathscr C \to \mathscr C : \mu_1 = \mu_2 \text{ on } [t_0,t] \; \Rightarrow \; \beta(\mu_1)(t) = \beta(\mu_2)(t) \right\} \text{ and}\\
	&\mathscr S_M := \left\{ \beta \in \mathscr S : \beta(\mathscr C) \subset \mathscr C_M \right\}.
\end{align*}

\begin{lemma}\label{L:HJBformula}
	Assume $\mcl C \subset \RR^d$ is open and convex, $x_0 \in \mcl C$, $t_0 < t_1$, $U$ is an open domain containing $\oline{\mcl C} \times [t_0,t_1]$, $f \in C^2(U)$, $H: \RR^d \to \RR$ is convex and superlinear, and $\nu > 0$. Given $(\alpha,\sigma) \in \mathscr C \times \mathscr C$, denote by $X = X^{\alpha,\sigma,x_0,t_0}$ the solution of
	\begin{equation}\label{SDEformula}
		dX_r = \alpha_r dr + \sigma_r dW_r \quad \text{in } [t_0,t_1] \text{ and } X_{t_0} = x_0,
	\end{equation}
	and
	\begin{equation}\label{stoppingtime2}
		\tau = \tau^{\alpha,\sigma,x_0,t_0} := \inf\left\{ t \in (t_0,t_1] : X^{\alpha,\sigma,x_0,t_0}_t \in \del \mcl C \right\}.
	\end{equation}
	
	\begin{enumerate}[(a)]
	\item Let $u \in C(U)$ be a pathwise viscosity super-solution, in the sense of Definition \ref{D:solutions}, of
	\[
		-du + \left[ - \nu m_-(D^2 u) + H(Du) \right]dt = f(x) \cdot d\zeta \quad \text{in }U.
	\]
	Then
	\[
		u(x_0,t_0) \ge \inf_{(\alpha,\sigma) \in \mathscr C \times \mathscr C_{\sqrt{2\nu}}} \mbb E \left[ u(X_\tau,\tau) + \int_{t_0}^\tau H^*(-\alpha_r)dr + \int_{t_0}^\tau f(X_r) \cdot d\zeta_r  \right].
	\]
	
	\item Let $u \in C(U)$ be a pathwise viscosity sub-solution, in the sense of Definition \ref{D:solutions}, of
	\[
		-du + \left[ - \nu m_+(D^2 u) + H(Du) \right]dt = f(x) \cdot d\zeta \quad \text{in } U.
	\]
	Then
	\[
		u(x_0,t_0) \le \inf_{\alpha \in \mathscr C} \sup_{\beta \in \mathscr S_{\sqrt{2\nu}}} \mbb E \left[ u(X_\tau,\tau) + \int_{t_0}^\tau H^*(-\alpha_r)dr + \int_{t_0}^\tau f(X_r) \cdot d\zeta_r \right],
	\]
	where $X$ and $\tau$ are as in respectively \eqref{SDEformula} and \eqref{stoppingtime2} with $\sigma = \beta(\alpha)$.
	\end{enumerate}
\end{lemma}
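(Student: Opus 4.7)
The plan is to reduce both statements to classical control/game representation formulae for a deterministic (pathwise) PDE, in the spirit of Lemma \ref{L:HJformula}. Setting $\tilde u(x,t) := u(x,t) + f(x)\cdot\zeta(t)$, a direct computation (analogous to the one in the proof of Lemma \ref{L:HJformula}) shows that $\tilde u$ is a viscosity super-solution in case (a), respectively a sub-solution in case (b), of
\[
-\partial_t\tilde u + \bigl[-\nu m_\mp\bigl(D^2\tilde u - D^2 f(x)\zeta(t)\bigr) + H\bigl(D\tilde u - Df(x)\zeta(t)\bigr)\bigr] = 0
\]
in $\mcl C \times [t_0,t_1)$ (with $m_-$ in case (a), $m_+$ in case (b)), and with terminal/boundary data $\tilde u = u + f\cdot\zeta$ on $\partial^*(\mcl C\times(t_0,t_1))$.

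Next, I would rewrite the operator in standard Legendre form. The substitution $\sigma = \sqrt{2\nu}\,v$ gives
\[
-\nu m_-(X) = \sup_{|\sigma|\le\sqrt{2\nu}}\bigl[-\tfrac{1}{2}\tr(\sigma\sigma^T X)\bigr]\quad\text{and}\quad -\nu m_+(X) = \inf_{|\sigma|\le\sqrt{2\nu}}\bigl[-\tfrac{1}{2}\tr(\sigma\sigma^T X)\bigr],
\]
and convexity plus superlinearity of $H$ yield $H(p-q) = \sup_\alpha[-\alpha\cdot p + \alpha\cdot q - H^*(-\alpha)]$. In case (a), the equation for $\tilde u$ becomes a standard second-order HJB equation with joint supremum over $(\alpha,\sigma)$; in case (b), it is an Isaacs equation whose $\alpha$- and $\sigma$-parts separate additively, so that sup and inf commute and the upper and lower Isaacs operators coincide.

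With the PDE in standard form, I would invoke the classical stochastic-control representation of HJB super-solutions and the Fleming--Souganidis representation of sub-solutions of Isaacs equations by the upper value of the associated stochastic differential game, yielding respectively
\[
\tilde u(x_0,t_0) \ge \inf_{(\alpha,\sigma)\in\mathscr C\times\mathscr C_{\sqrt{2\nu}}}\EE\Bigl[\tilde u(X_\tau,\tau) + \int_{t_0}^\tau \tilde L\,dr\Bigr]
\]
and
\[
\tilde u(x_0,t_0) \le \inf_{\alpha\in\mathscr C}\sup_{\beta\in\mathscr S_{\sqrt{2\nu}}}\EE\Bigl[\tilde u(X_\tau,\tau) + \int_{t_0}^\tau \tilde L\,dr\Bigr],
\]
where the running cost is $\tilde L := H^*(-\alpha_r) - \alpha_r\cdot Df(X_r)\zeta(r) - \tfrac{1}{2}\langle D^2 f(X_r)\zeta(r)\sigma_r,\sigma_r\rangle$, and $(X,\tau)$ are as in \eqref{SDEformula}--\eqref{stoppingtime2} with $\sigma = \beta(\alpha)$ in case (b). The final step is to apply Lemma \ref{L:intzeta2} to rewrite the two $\zeta$-dependent terms in $\tilde L$ as $-\int_{t_0}^\tau f(X_r)\cdot d\zeta_r$ plus the boundary contributions $f(X_\tau)\cdot\zeta(\tau) - f(x_0)\cdot\zeta(t_0)$; after substituting $\tilde u = u + f\cdot\zeta$, the boundary $\zeta$-terms cancel and the claimed inequalities emerge.

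The main technical point to address is that the coefficients of the transformed PDE depend on $t$ only through the merely continuous path $\zeta$, whereas the classical representation theorems are usually stated under smoother time-regularity. This will be handled by a standard approximation: mollify $\zeta$ to $\zeta^\eps\in C^1([t_0,t_1],\RR^m)$ with $\zeta^\eps\to\zeta$ uniformly, apply the classical representation for each $\eps$, and pass to the limit using uniform convergence of viscosity solutions together with the $L^2(\mcl A)$-continuity of $\zeta\mapsto\int_{t_0}^\tau f(X_r)\cdot d\zeta_r$ provided by Lemma \ref{L:intzeta2}. Unboundedness of $\mathscr C$ causes no difficulty, since the superlinearity of $H^*$ forces near-optimal $\alpha$ to remain uniformly bounded a priori, so both infima may be restricted to some $\mathscr C_N$ without change in value.
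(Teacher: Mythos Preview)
Your proposal is correct and follows essentially the same route as the paper: transform to $\tilde u = u + f\cdot\zeta$, rewrite the operator in sup/inf form over $(\alpha,\sigma)$, invoke the classical stochastic control (respectively Fleming--Souganidis differential games) representation for the resulting boundary-terminal-value problem, and then use Lemma \ref{L:intzeta2} to convert the $\zeta$-dependent running cost back into $\int_{t_0}^\tau f(X_r)\cdot d\zeta_r$. Your explicit mollification argument for merely continuous $\zeta$ and your remark on restricting to $\mathscr C_N$ are more careful than the paper, which simply cites the classical references without discussing these points.
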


\begin{proof}
	As a preliminary step, assume that $(\alpha,\sigma) \in \mathscr C \times \mathscr C$ and $X$ and $\tau$ are as in \eqref{SDEformula} and \eqref{stoppingtime2}. Then Lemma \ref{L:intzeta2} gives
	\begin{equation}\label{intbypartsstep}
		\begin{split}
		\mbb E \left[ \int_t^\tau f(X_r) \cdot d\zeta_r \right] &= \mbb E\left[ f(X_\tau) \zeta(\tau) - f(X_t) \zeta(t)\right] \\
		&- \mbb E \left[ \int_t^\tau \zeta(r) \cdot \pars{ Df(X_r)\cdot \alpha_r + \frac{1}{2} D^2 f(X_r)\sigma_r \cdot \sigma_r } dr \right].
		\end{split}
	\end{equation}

	(a) By Definition \ref{D:solutions}, if
	\[
		\tilde u(x,t) := u(x,t) + f(x) \cdot \zeta(t),
	\]
	then $\tilde u$ is a classical viscosity super-solution of
	\begin{equation}\label{E:BTVP2}
		\begin{dcases}
		-\del_t \tilde u - \nu m_-\pars{ D^2 \tilde u - D^2 f(x) \cdot \zeta(t)} + H\pars{ D\tilde u - Df(x) \cdot \zeta(t)} = 0 & \text{in } \mcl C \times [t_0,t_1),\\
		\tilde u(x,t) = u(x,t) + f(x) \cdot \zeta(t) \quad \text{if } t = t_1 \text{ or } x \in \del \mcl C.
		\end{dcases}
	\end{equation}
	For $(X,p,x,t) \in \mbb S^d \times \RR^d \times U$, we have
	\begin{align*}
		&-\nu m_-\pars{ X - D^2 f(x) \cdot \zeta(t)} + H\pars{ p - Df(x) \cdot \zeta(t)}\\
		&=
		\sup_{|\sigma| \le \sqrt{2\nu}, \; \alpha \in \RR^d} \left\{ - \frac{1}{2} \sigma \cdot X\sigma + \frac{1}{2} \sigma \cdot D^2 f(x) \sigma \cdot \zeta(t) - \alpha \cdot p + \alpha \cdot Df(x) \cdot \zeta(t) - H^*(-\alpha) \right\},
	\end{align*}
	and so standard results from the theory of stochastic control (see Theorem II.3 in \cite{L}) imply that the unique solution of \eqref{E:BTVP2} is given by
	\begin{align*}
		w(x,t) &:= \inf_{(\alpha,\sigma) \in \mathscr C \times \mathscr C_{\sqrt{2\nu}}} 
		\mbb E \bigg[ u(X_\tau,\tau) + f(X_\tau) \cdot \zeta(\tau)\\
		&+ \int_t^\tau \left[ H^*(-\alpha_r) -\zeta(r) \cdot \pars{ \alpha_r \cdot Df(X_r) + \frac{1}{2} \sigma_r \cdot D^2 f(X_r) \sigma_r }\right]dr \bigg]\\
		&= f(x) \cdot \zeta(t) + \inf_{(\alpha,\sigma) \in \mathscr C \times \mathscr C_{\sqrt{2\nu}}} \mbb E \left[ u(X_\tau,\tau) + \int_t^\tau H^*(-\alpha_r) dr + \int_t^\tau f(X_r) \cdot d\zeta_r \right],
	\end{align*}
	where the last equality follows from \eqref{intbypartsstep}. The result follows from the comparison principle for \eqref{E:BTVP2}, which implies that $\tilde u(x,t) \ge w(x,t)$ for $(x,t) \in \oline{\mcl C} \times [t_0,t_1]$.
	
	(b) By Definition \ref{D:solutions}, if
	\[
		\tilde u(x,t) := u(x,t) + f(x) \cdot \zeta(t),
	\]
	then $\tilde u$ is a classical viscosity sub-solution of
	\begin{equation}\label{E:BTVP3}
		\begin{dcases}		
		-\del_t\tilde u - \nu m_+\pars{ D^2 \tilde u - D^2 f(x) \cdot \zeta(t)} + H\pars{ D\tilde u - Df(x) \cdot \zeta(t)} = 0 & \text{in } \mcl C \times [t_0,t_1),\\
		\tilde u(x,t) = u(x,t) + f(x) \cdot \zeta(t) \quad \text{if } t = t_1 \text{ or } x \in \del \mcl C.
		\end{dcases}
	\end{equation}
	For $(X,p,x,t) \in \mbb S^d \times \RR^d \times U$, we have
	\begin{align*}
		&-\nu m_+\pars{ X - D^2 f(x) \cdot \zeta(t)} + H\pars{ p - Df(x) \cdot \zeta(t)}\\
		&= \sup_{\alpha \in \RR^d} \inf_{|\sigma| \le \sqrt{2\nu}} \left\{ - \frac{1}{2} \sigma \cdot X\sigma + \frac{1}{2} \sigma \cdot D^2 f(x) \sigma \cdot \zeta(t) - \alpha \cdot p + \alpha \cdot Df(x) \cdot \zeta(t) - H^*(-\alpha) \right\}\\
		&= \inf_{|\sigma| \le \sqrt{2\nu}} \sup_{\alpha \in \RR^d}\left\{ - \frac{1}{2} \sigma \cdot X\sigma + \frac{1}{2} \sigma \cdot D^2 f(x) \sigma \cdot \zeta(t) - \alpha \cdot p + \alpha \cdot Df(x) \cdot \zeta(t) - H^*(-\alpha) \right\},
	\end{align*}
	and so standard results from the theory of stochastic differential games (see Theorem 2.6 of \cite{FS}) imply that, keeping in mind that $\sigma = \beta(\alpha)$ below, the unique solution of \eqref{E:BTVP3} is given by
	\begin{align*}
		w(x,t) &:= \inf_{\alpha \in \mathscr C} \sup_{\beta \in \mathscr S_{\sqrt{2\nu}}}
		\mbb E \bigg[ u(X_\tau,\tau) + f(X_\tau) \cdot \zeta(\tau)\\
		&+ \int_t^\tau \left[ H^*(-\alpha_r) -\zeta(r) \cdot \pars{ \alpha_r \cdot Df(X_r) + \frac{1}{2} \sigma_r \cdot D^2 f(X_r) \sigma_r }\right]dr
		\bigg]\\
		&= f(x) \cdot \zeta(t) + \inf_{\alpha \in \mathscr C} \sup_{\beta \in \mathscr S_{\sqrt{2\nu}}} \mbb E_{x,t} \left[ u(X_\tau,\tau) + \int_t^\tau H^*(-\alpha_r) dr + \int_t^\tau f(X_r) \cdot d\zeta_r \right],
	\end{align*}
	where \eqref{intbypartsstep} gives the last equality. The result follows from the comparison principle for \eqref{E:BTVP2}, which implies that $\tilde u(x,t) \le w(x,t)$ for $(x,t) \in \oline{\mcl C} \times [t_0,t_1]$.
\end{proof}

\subsection{Comparison with homogenous equations} We now take $\zeta$ to be a Brownian motion, and we assume that
\begin{equation}\label{BM}
	B: [-1,0] \times \Omega \to \RR^m \quad \text{is a standard Brownian motion on the probability space } (\Omega, \mbf F, \mbf P).
\end{equation}
In this case, the forcing term $\sum_{i=1}^m f^i(x) \cdot dB^i(t)$ is nowhere pointwise defined, and the naive estimate
\[
 	\abs{ \sum_{i=1}^m f^i(x) \cdot dB^i(t)} \le \nor{f}{\oo} \nor{dB}{\oo}
\]
cannot be used in comparison principle arguments, as would be the case if $B$ belonged to $C^1$.

The results given below provide another way to compare solutions of \eqref{E:main1} and \eqref{E:main2} with equation that are independent of $x$ and $t$. In the new equations, the forcing term is replaced with a random constant that depends on $f$ only through quantities as in \eqref{A:fC1bound} and \eqref{A:fC2bound}, at the expense of slightly weakening the coercivity bounds in the gradient variable. The main tool is to use Lemmas \ref{L:intcontrol1} and \ref{L:intcontrol2} to control the stochastic integrals that arise from the representation formulae in Lemmas \ref{L:HJformula} and \ref{L:HJBformula}.

For $q > 1$, define
\[
	q' := \frac{q}{q-1} \quad \text{and} \quad c_q := (q-1) q^{-q/(q-1)},
\]
so that, in particular, for any constant $a > 0$, the convex conjugate of $p \mapsto a |p|^q$ is given by
\begin{equation}\label{convexdual}
	\pars{a |\cdot|^q }^* = c_q a^{-(q'-1)} |\cdot|^{q'}.
\end{equation}

\begin{lemma}\label{L:Dbarrier}
	Let $B$ be as in \eqref{BM} and fix $m > 0$, $K > 0$, $q > 1$, and $\kappa \in (0,1/2)$. Then there exist a random variable $\mcl D: \Omega \to \RR_+$ and $\lambda_0 = \lambda_0(\kappa, m,K,q) > 0$ such that the following hold:
	\begin{enumerate}[(a)]
	\item For any $p \ge 1$, there exists a constant  $C = C(\kappa,K,p,q) > 0$ such that, for all $\lambda \ge \lambda_0$,
	\[
		\mbf P( \mcl D > \lambda) \le \frac{Cm^p}{\lambda^p}.
	\]
	\item Let $f \in C^1(\RR^d,\RR^m)$ satisfy
	\[
		\nor{f}{\oo} \le m \quad \text{and} \quad  \nor{f}{\oo}(1+ \nor{Df}{\oo}) \le K,
	\]
	and assume that $A > 1$, $\eps_1,\eps_2: \Omega \to (0,1)$, and $-1 + \eps_2 \le r_0 \le 0$. Suppose that, for some $R \in (0,\oo]$, $w$ solves 
	\[
		\begin{dcases}
			\del_tw + \frac{1}{A} |Dw|^q - \pars{ \frac{\eps_2}{\eps_1}}^{q'} A \le \pars{ \frac{\eps_2}{\eps_1}}^{q'}  f(\eps_1 x) \cdot \dot B(r_0 + \eps_2 t) & \text{and}\\
			\del_t w+ A|Dw|^q + \pars{ \frac{\eps_2}{\eps_1}}^{q'} A \ge \pars{ \frac{\eps_2}{\eps_1}}^{q'}  f(\eps_1 x) \cdot \dot B(r_0 + \eps_2 t) & \text{in } B_R \times [-1,0],
		\end{dcases}
	\]
	fix an open convex set $\mcl C \subset B_R$, $x_0 \in \mcl C$, and $-1 \le t_1 < t_0 \le 0$. Then
	\[
		w_-(x_0,t_0) - \frac{\eps_2^{q'-1 + \kappa}}{\eps_1^{q'}} A\mcl D
		\le w(x_0,t_0) \le w_+(x_0,t_0) + \frac{\eps_2^{q'-1 + \kappa}}{\eps_1^{q'}} A\mcl D,
	\]
	where
	\[
		\begin{dcases}
			\del_t w_{-} + 2A |Dw_-|^q = 0 & \text{and} \\
			\del_t w_{+} + \frac{1}{2A}|Dw_+|^q = 0 & \text{in } \mcl C \times (t_1,t_0], \text{ and}\\
			w_- = w_+ = w & \text{on } \del^*(\mcl C \times (t_1,t_0)).
		\end{dcases}
	\]
	\end{enumerate}
\end{lemma}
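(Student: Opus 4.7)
I would define $\mcl D$ using the universal stochastic-integral controls of Lemmas \ref{L:intcontrol1} and \ref{L:intcontrol2} in the appendix, which bound $\int f(\eps_1\gamma_r)\cdot d\hat B(r)$ (with $\hat B(r):=B(r_0+\eps_2 r)$) uniformly over admissible Lipschitz paths $\gamma$ with $L^p$ moment estimates scaling as $\nor{f}{\oo}^p\le m^p$. Part (a) is then immediate from Markov's inequality.

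The two inequalities in (b) are symmetric, so I describe only the upper one. After a time reversal, the sub-solution hypothesis on $w$ falls under Lemma \ref{L:HJformula}, with \eqref{convexdual} identifying the Legendre transform of $\tfrac{1}{A}|p|^q$ as $c_q A^{q'-1}|\alpha|^{q'}$. Integrating the remaining deterministic part of the right-hand side along the competitor path yields
\[
w(x_0,t_0) \le \inf_{\gamma}\!\left\{ w(\gamma_{\tau'},\tau') + c_q A^{q'-1}\!\!\int_{\tau'}^{t_0}\!|\dot\gamma_r|^{q'} dr + \pars{\tfrac{\eps_2}{\eps_1}}^{q'}\!A(t_0-\tau') + \pars{\tfrac{\eps_2}{\eps_1}}^{q'}\!\!\int_{\tau'}^{t_0}\! f(\eps_1\gamma_r)\cdot d\hat B(r) \right\}\!,
\]
where $\gamma \in W^{1,\oo}([t_1,t_0],\RR^d)$, $\gamma_{t_0}=x_0$, and $\tau' \in [t_1,t_0]$ is the last time before $t_0$ at which $\gamma$ touches $\del^*(\mcl C\times(t_1,t_0))$. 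Because $w_+$ is the classical solution of a first-order HJ equation with the same data on that parabolic boundary, the Hopf-Lax formula gives
\[
w_+(x_0,t_0) = \inf_{\gamma}\Bigl\{ w(\gamma_{\tau'},\tau') + c_q(2A)^{q'-1}\!\!\int_{\tau'}^{t_0}|\dot\gamma_r|^{q'} dr \Bigr\}.
\]

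The key step is to absorb the last two terms in the $w$-formula into the strictly positive gap $c_q(2A)^{q'-1}-c_q A^{q'-1} = c_q A^{q'-1}(2^{q'-1}-1)$ between the two kinetic-cost coefficients. Since $\eps_2 \le 1$ and $t_0-\tau' \le 1$, the deterministic term is at most $(\eps_2^{q'-1+\kappa}/\eps_1^{q'})A$. For the stochastic term, the appendix lemmas supply, for each $\eta>0$ and uniformly in $\gamma$, $\eps_1,\eps_2,r_0$,
\[
\abs{\pars{\tfrac{\eps_2}{\eps_1}}^{q'}\!\!\int_{\tau'}^{t_0}\! f(\eps_1\gamma_r)\cdot d\hat B(r)} \le \eta \int_{\tau'}^{t_0}|\dot\gamma_r|^{q'} dr + C_\eta \frac{\eps_2^{q'-1+\kappa}}{\eps_1^{q'}}\mcl D.
\]
Choosing $\eta$ so that $c_q A^{q'-1}+\eta \le c_q(2A)^{q'-1}$ and comparing with the $w_+$-representation gives $w(x_0,t_0) \le w_+(x_0,t_0) + (\eps_2^{q'-1+\kappa}/\eps_1^{q'})A\mcl D$, after enlarging $\mcl D$ by a constant depending on $K$, $\kappa$, $q$.

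The lower bound proceeds identically, with $w$ treated as a super-solution; Lemma \ref{L:HJformula} then produces the reverse infimum with kinetic cost $c_q A^{-(q'-1)}|\alpha|^{q'}$, while $w_-$'s Hopf-Lax representation carries the smaller cost $c_q(2A)^{-(q'-1)}|\alpha|^{q'}$. The gap between these is again strictly positive, so the same Young-inequality absorption works in the opposite direction. The principal obstacle is the uniform-in-$\gamma$ stochastic-integral estimate above, with both the correct scaling $\eps_2^{q'-1+\kappa}/\eps_1^{q'}$ and the correct linear-in-$m$ tail. For each individual deterministic $\gamma$, $\int f(\eps_1\gamma_r)\cdot d\hat B(r)$ is a centered Gaussian with variance $\le \eps_2 \nor{f}{\oo}^2$, but producing a single random variable $\mcl D$ dominating the whole infinite family simultaneously requires a chaining/maximal-inequality argument, which is precisely the content of Lemmas \ref{L:intcontrol1} and \ref{L:intcontrol2}; the parameter $\kappa \in (0,1/2)$ records the metric-entropy loss of that passage.
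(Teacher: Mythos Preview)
Your approach matches the paper's: time reversal, the representation of Lemma~\ref{L:HJformula}, the Hopf--Lax formula for $w_\pm$, and absorption of the stochastic integral into the gap $c_q(2A)^{q'-1}-c_qA^{q'-1}$ (resp.\ $c_qA^{-(q'-1)}-c_q(2A)^{-(q'-1)}$) between the kinetic costs via Lemma~\ref{L:intcontrol1}.

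One point is asserted rather than justified and deserves care. The displayed bound on the stochastic term with the scaling $\eps_2^{q'-1+\kappa}/\eps_1^{q'}$ does \emph{not} follow by applying Lemma~\ref{L:intcontrol1} directly to the integral against $\hat B(r)=B(r_0+\eps_2 r)$: that lemma is stated for the fixed Brownian motion on $[-1,0]$, and a random variable $\mcl D$ built from $\hat B$ would depend on the (random) parameters $\eps_2,r_0$, which would defeat the purpose. The paper instead changes variables $s=r_0+\eps_2 r$, rewriting the integral as one against the fixed $\tilde B$ over a subinterval of $[0,1]$ with the rescaled path $s\mapsto\eps_1\gamma((s-r_0)/\eps_2)$, whose speed is $(\eps_1/\eps_2)\dot\gamma$; applying Lemma~\ref{L:intcontrol1} to this path and undoing the change of variables is exactly what produces the factor $\eps_2^{q'-1+\kappa}/\eps_1^{q'}$ (this is the computation \eqref{intcontroluse}). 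Also, only Lemma~\ref{L:intcontrol1} is needed here; Lemma~\ref{L:intcontrol2} is reserved for the second-order analogue, Lemma~\ref{L:Ebarrier}.
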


\begin{proof}
{\it Step 1.} For $(x,t) \in \oline{B_R} \times [0,1]$, define $\tilde w(x,t) := w(x,-t)$ and $\tilde B(t) := B(0) - B(-t)$. Then $\tilde B: [0,1] \times \Omega \to \RR^m$ is a Brownian motion, and $\tilde w$ solves
	\[
		\begin{dcases}
			-\del_t\tilde w 
			+ \frac{1}{A} |D\tilde w|^q - \pars{ \frac{\eps_2}{\eps_1}}^{q'}  A \le \pars{ \frac{\eps_2}{\eps_1}}^{q'} f(\eps_1 x) \cdot \dot {\tilde B}(-r_0 + \eps_2 t) & \text{and}\\
			-\del_t\tilde w
			+ A|D\tilde w|^q + \pars{ \frac{\eps_2}{\eps_1}}^{q'} A \ge \pars{ \frac{\eps_2}{\eps_1}}^{q'} f(\eps_1 x) \cdot \dot {\tilde B}(-r_0 + \eps_2 t) & \text{in } B_R \times [0,1].
		\end{dcases}
	\]
	We also define $\tilde w_+(x,t) = w_+(x,-t)$ and $\tilde w_-(x,t) = w_-(x,-t)$, which solve
	\[
		\begin{dcases}
			-\del_t\tilde w_{-} + 2A |Dw_-|^q = 0 & \text{and} \\
			- \del_t\tilde w_{+} + \frac{1}{2A}|Dw_+|^q = 0 & \text{in } \mcl C \times [-t_0,-t_1), \text{ and}\\
			\tilde w_- = \tilde w_+ = \tilde w & \text{on } (\mcl C \times \{-t_1\}) \cup (\del \mcl C \times [-t_0,-t_1]).
		\end{dcases}
	\]
	The classical Hopf-Lax formula and \eqref{convexdual} then give, for $(x,t) \in \oline{\mcl C} \times [-t_0,-t_1]$,
	\begin{align*}
		\tilde w_+(x,t) &= \inf_{(y,s) \in (\mcl C \times \{-t_1\}) \cup (\del \mcl C \times [-t_0,-t_1])} \left\{ \tilde w(y,s) + c_q(2A)^{q'-1} \frac{|x-y|^{q'}}{|t-s|^{q'-1}} \right\} \text{ and}\\
		\tilde w_-(x,t) &= \inf_{(y,s) \in (\mcl C \times \{-t_1\}) \cup (\del \mcl C \times [-t_0,-t_1])} \left\{ \tilde w(y,s) + c_q(2A)^{-(q'-1)} \frac{|x-y|^{q'}}{|t-s|^{q'-1}} \right\}.
	\end{align*}
	
	{\it Step 2.} Let $\kappa \in (0,1/2)$ and $\mcl D$ be as in Lemma \ref{L:intcontrol1}. Then, by that lemma, for any $0 < \delta < 1$, $\gamma \in W^{1,\oo}([-t_0,-t_1],\RR^d)$, and $\tau \in [-t_0,-t_1]$,
	\begin{equation}\label{intcontroluse}
	\begin{split}
		\pars{ \frac{\eps_2}{\eps_1}}^{q'}& \abs{ \int_{-t_0}^{\tau} f(\eps_1 \gamma_r) \cdot \dot{\tilde B}(-r_0 + \eps_2 r)dr}\\
		&= \frac{\eps_2^{q'-1}}{\eps_1^{q'}} \abs{ \int_{-r_0 - \eps_2 t_0}^{-r_0 + \eps_2 \tau} f \pars{ \eps_1 \gamma \pars{ \frac{r +r_0}{\eps_2} } } \cdot \dot{\tilde B}(r)dr }\\
		&\le \frac{\eps_2^{q'-1}}{\eps_1^{q'}} \delta^{q'} \int_{-r_0 - \eps_2 t_0}^{-r_0 + \eps_2 \tau} \abs{ \frac{\eps_1}{\eps_2} \dot \gamma \pars{ \frac{r +r_0}{\eps_2}} }^{q'} dr + \frac{\eps_2^{q'-1 + \kappa}}{\eps_1^{q'}} \frac{\mcl D}{\delta^q} (\tau + t_0)^\kappa \\
		&= \delta^{q'} \int_{-t_0}^\tau |\dot \gamma_r|^{q'}dr + \frac{\eps_2^{q'-1 + \kappa}}{\eps_1^{q'}} \frac{\mcl D}{\delta^q} (\tau + t_0)^\kappa.
	\end{split}
	\end{equation}
	
	{\it Step 3.} We prove the upper bound first. By Lemma \ref{L:HJformula} and the equality \eqref{convexdual}, we have, with probability one,
	\begin{align*}
		\tilde w(x_0,-t_0) &\le \inf\left\{ \tilde w(\gamma_\tau,\tau) + c_q A^{q'-1} \int_{-t_0}^{\tau} |\dot \gamma_r|^{q'}dr + \pars{ \frac{\eps_2}{\eps_1}}^{q'} A(\tau + t_0) \right. \\
		&+ \left. \pars{ \frac{\eps_2}{\eps_1}}^{q'} \int_{-t_0}^\tau f(\eps_1 \gamma_r) \cdot \dot{\tilde B}(-r_0 + \eps_2 r)dr : \gamma \in W^{1,\oo}([-t_0,-t_1],\RR^d) \right\},
	\end{align*}
	where, as in \eqref{stoppingtime1}, we define
	\[
		\tau = \tau^\gamma := \inf \left\{ t \in (-t_0,-t_1] : \gamma_\tau \in \del \mcl C \right\}.
	\]
	We then set
	\[
		\delta = 1 \wedge \left[ (2^{q'-1} - 1)^{1/q'} c_q^{1/q'} A^{1/q} \right],
	\]
	which, in particular, implies that $\delta^{q'} \le c_q (2^{q'-1} - 1) A^{q'-1}$. Then, in view of \eqref{intcontroluse}, for some constant $C_q > 0$,
	\begin{equation}\label{tildewupper}
	\begin{split}
		\tilde w(x_0,-t_0) &\le \inf \left\{ \tilde w(\gamma_\tau,\tau) + c_q (2A)^{q'-1} \int_{-t_0}^{\tau} |\dot \gamma_r|^{q'} dr 
		 : \gamma \in W^{1,\oo}([-t_0,-t_1],\RR^d) \right\} \\
		 &\quad +  A \pars{ \frac{\eps_2}{\eps_1}}^{q'} \left[ 1 +  \frac{1}{\delta^q} \eps_2^{-(1-\kappa)}\mcl D \right] \\
		&\le \tilde w_+(x_0,-t_0) +   A \frac{\eps_2^{q' - 1 + \kappa}}{\eps_1^{q'}} (1 +  C_q\mcl D ).
	\end{split}
	\end{equation}
	
	{\it Step 4.} We next consider the lower bound. We again use \eqref{convexdual} and Lemma \ref{L:HJformula} to obtain
	\begin{align*}
		\tilde w(x_0,-t_0) &\ge \inf\left\{ \tilde w(\gamma_\tau,\tau) + c_q A^{-(q'-1)} \int_{-t_0}^\tau |\dot \gamma_r|^{q'}dr - \pars{ \frac{\eps_2}{\eps_1}}^{q'} A (\tau + t_0) \right. \\
		&+ \left.\pars{ \frac{\eps_2}{\eps_1}}^{q'}\int_{-t_0}^{\tau} f(\eps_1 \dot \gamma_r) \cdot \dot{\tilde B}(-r_0 + \eps_2 r)dr : \gamma \in W^{1,\oo}([-t_0,-t_1],\RR^d) \right\}.
	\end{align*}
	Choosing
	\[
		\delta := 1 \wedge c_q^{1/q'}(1 - 2^{-(q'-1)})^{1/q'} A^{-1/q}
	\]
	yields $\delta^{q'} \le c_q (1 - 2^{-(q'-1)}) A^{-(q'-1)}$. As a consequence, Jensen's inequality and \eqref{intcontroluse} yield, for some $C_q' > 0$,
	\begin{equation}\label{tildewlower}
	\begin{split}
		\tilde w(x_0,-t_0)
		&\ge \inf \left\{ \tilde w(\gamma_\tau,\tau) + c_q (2A)^{-(q'-1)} \int_{-t_0}^{\tau} |\dot \gamma_r|^{q'}dr : \gamma \in W^{1,\oo}([-t_0,-t_1]) \right\} \\
		& - \frac{\eps_2^{q'-1 + \kappa}}{\eps_1^{q'}} A( 1 + C_q'\mcl D )\\
		&\ge w_-(x_0,-t_0) - \frac{\eps_2^{q'-1 + \kappa}}{\eps_1^{q'}} A( 1 + C_q'\mcl D ).
	\end{split}
	\end{equation}
	
	{\it Step 5.} We set $\tilde{\mcl D} := 1 + (C_q \vee C_q')\mcl D$, so that, after performing a time change, \eqref{tildewupper} and \eqref{tildewlower} lead to
	\[
		w_-(x_0,t_0) - \frac{\eps_2^{q'-1 + \kappa}}{\eps_1^{q'}} A \tilde{\mcl D}
		\le w(x_0,t_0)
		\le w_+(x_0,t_0) +   A \frac{\eps_2^{q' - 1 + \kappa}}{\eps_1^{q'}} \tilde{\mcl D}.
	\]
	Let $\lambda_0$ be as in Lemma \ref{L:intcontrol1}. Then, for all
	\[
		\lambda \ge \widetilde{\lambda_0} := (1 + (C_q \vee C_q') \lambda_0) \vee 2,
	\]
	we have, for $C = C(\kappa, m,K,p,q) > 0$ as in Lemma \ref{L:intcontrol1},
	\[
		\mbf P(\tilde{\mcl D} > \lambda) = \mbf P \pars{ \mcl D > \frac{\lambda-1}{C_q \vee C_q'} }
		\le \frac{C (C_q \vee C_q')^p}{(\lambda-1)^p} \le \frac{2^p C (C_q \vee C_q')^p}{\lambda^p}.
	\]
\end{proof}

\begin{lemma}\label{L:Ebarrier}
	Let $B$ be as in \eqref{BM}, and fix $m > 0$, $K > 0$, $q > 1$, $\nu > 0$, and $\kappa \in (0,1/2)$. Then there exist a random variable $\mcl E: \Omega \to \RR_+$ and $\lambda_0 = \lambda_0(\kappa, m,K,q) > 0$ such that the following hold:
	\begin{enumerate}[(a)]
	\item For any $p \ge 1$, there exists a constant  $C = C(\kappa,K,p,q) > 0$ such that, for all $\lambda \ge \lambda_0$,
	\[
		\mbf P( \mcl E > \lambda) \le \frac{Cm^p}{\lambda^p}.
	\]
	\item Let $f \in C^2(\RR^d,\RR^m)$ satisfy
	\[
		\nor{f}{\oo} \le m \quad \text{and} \quad  \nor{f}{\oo} \pars{1+ \nor{Df}{\oo} + \nu \nor{D^2 f}{\oo}} \le K,
	\]
	and assume that $A > 1$,  $r_0\in (-1,0]$, $\eps_1,\eps_2: \Omega \to (0,1)$ and $-1 + \eps_2 \le r_0 \le 0$. Suppose that, for some $R \in (0,\oo]$, $w$ solves 
	\[
		\begin{dcases}
			\del_tw - \frac{\eps_2}{\eps_1^2} \nu m_+(D^2 w)
			+ \frac{1}{A} |Dw|^q - \pars{ \frac{\eps_2}{\eps_1}}^{q'} A \le \pars{ \frac{\eps_2}{\eps_1}}^{q'} f(\eps_1 x) \cdot \dot B(r_0 + \eps_2 t) & \text{and}\\
			\del_tw - \frac{\eps_2}{\eps_1^2} \nu m_-(D^2 w)
			+ A|Dw|^q + \pars{ \frac{\eps_2}{\eps_1}}^{q'}A \ge \pars{ \frac{\eps_2}{\eps_1}}^{q'} f(\eps_1 x) \cdot \dot B(r_0 + \eps_2 t) & \text{in } B_R \times [-1,0],
		\end{dcases}
	\]
	fix a  convex open set $\mcl C \subset B_R$, $x_0 \in \mcl C$, and $-1 \le t_1 < t_0 \le 0$. Then
	\[
		w_-(x_0,t_0) - \frac{\eps_2^{q'-1+\kappa}}{\eps_1^{q'}} A\mcl E
		\le w(x_0,t_0) \le w_+(x_0,t_0) + \ \frac{\eps_2^{q'-1+\kappa}}{\eps_1^{q'}} A\mcl E,
	\]
	where
	\[
		\begin{dcases}
			\del_tw_{-} - \frac{\eps_2}{\eps_1^2} \nu m_-(D^2 w_-) + 2A |Dw_-|^q = 0 & \text{and} \\
			\del_t w_{+} - \frac{\eps_2}{\eps_1^2} \nu m_+(D^2 w_+) + \frac{1}{2A} |Dw_+|^q = 0 & \text{in } \mcl C \times (t_1,t_0), \text{ and}\\
			w_- = w_+ = w & \text{on } \del^*( \mcl C \times [t_1,t_0]).
		\end{dcases}
	\]
	\end{enumerate}
\end{lemma}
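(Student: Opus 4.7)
I will follow the five-step structure of the proof of Lemma \ref{L:Dbarrier}, with the deterministic Hopf--Lax formula replaced by the stochastic control and differential games representations of Lemma \ref{L:HJBformula}, and with Lemma \ref{L:intcontrol1} replaced by its SDE-trajectory analogue Lemma \ref{L:intcontrol2}.

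\textbf{Setup.} I would begin, as in Step 1 of the proof of Lemma \ref{L:Dbarrier}, by setting $\tilde w(x,t) := w(x,-t)$, $\tilde w_\pm(x,t) := w_\pm(x,-t)$, and $\tilde B(t) := B(0) - B(-t)$, so that after time reversal, $\tilde w$ is a sub-solution of the $m_+$-equation and a super-solution of the $m_-$-equation in the form required by Lemma \ref{L:HJBformula}, while $\tilde w_+$ (resp.\ $\tilde w_-$) is the unique solution of the Isaacs-type (resp.\ HJB-type) equation with effective diffusion bound $\sqrt{2\nu\eps_2/\eps_1^2}$ and Hamiltonian $\frac{1}{2A}|\cdot|^q$ (resp.\ $2A|\cdot|^q$).

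\textbf{Upper bound.} For the upper bound I would apply Lemma \ref{L:HJBformula}(b) to the sub-solution $\tilde w$ with $H(p) = A^{-1}|p|^q$; using $(A^{-1}|\cdot|^q)^* = c_q A^{q'-1}|\cdot|^{q'}$ yields
\[
\tilde w(x_0,-t_0) \le \inf_\alpha \sup_\beta \EE\!\left[\tilde w(X_\tau,\tau) + c_q A^{q'-1}\!\!\int_{-t_0}^\tau\! |\alpha_r|^{q'} dr + A\Big(\tfrac{\eps_2}{\eps_1}\Big)^{q'}(\tau+t_0) + \Big(\tfrac{\eps_2}{\eps_1}\Big)^{q'}\!\!\int_{-t_0}^\tau\! f(\eps_1 X_r)\cdot d\tilde B(-r_0+\eps_2 r)\right],
\]
where $dX_r = \alpha_r dr + \sigma_r dW_r$, $\sigma = \beta(\alpha)$ has $|\sigma|\le\sqrt{2\nu\eps_2/\eps_1^2}$, and $\tau$ is the exit time from $\mcl C$. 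After the change of variables $r\mapsto -r_0+\eps_2 r$ in the stochastic integral, invoking Lemma \ref{L:intcontrol2} would produce a random variable $\mcl E$ depending only on $\tilde B$ and $f$ such that, for every $\delta\in(0,1)$,
\[
\Big(\tfrac{\eps_2}{\eps_1}\Big)^{q'}\EE\!\left[\left|\int_{-t_0}^\tau\! f(\eps_1 X_r)\cdot d\tilde B(-r_0+\eps_2 r)\right|\right] \le \delta^{q'}\EE\!\!\int_{-t_0}^\tau\! |\alpha_r|^{q'} dr + \frac{\eps_2^{q'-1+\kappa}}{\eps_1^{q'}\delta^q}\mcl E,
\]
uniformly in the admissible control $(\alpha,\sigma)$. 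Choosing $\delta^{q'} = 1\wedge c_q(2^{q'-1}-1)A^{q'-1}$ would absorb the kinetic term and upgrade the cost coefficient from $c_q A^{q'-1}$ to $c_q(2A)^{q'-1}$, matching the representation of $\tilde w_+(x_0,-t_0)$ coming from Lemma \ref{L:HJBformula}(b). Passing to the $\inf_\alpha\sup_\beta$ then delivers $\tilde w(x_0,-t_0) \le \tilde w_+(x_0,-t_0) + A(\eps_2^{q'-1+\kappa}/\eps_1^{q'})(1+C_q\mcl E)$, with the extra $1$ absorbing the constant $A(\eps_2/\eps_1)^{q'}(\tau+t_0)$ via $\eps_2 < 1$.

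\textbf{Lower bound and conclusion.} The lower bound would be obtained by applying Lemma \ref{L:HJBformula}(a) to the super-solution with $H(p)=A|p|^q$, dual $c_q A^{-(q'-1)}|\cdot|^{q'}$, and the choice $\delta^{q'} = 1 \wedge c_q(1-2^{-(q'-1)})A^{-(q'-1)}$, yielding $\tilde w(x_0,-t_0) \ge \tilde w_-(x_0,-t_0) - A(\eps_2^{q'-1+\kappa}/\eps_1^{q'})(1+C_q'\mcl E)$. I would then set $\tilde{\mcl E} := 1+(C_q\vee C_q')\mcl E$, undo the time reversal, and transfer the tail bound of $\mcl E$ from Lemma \ref{L:intcontrol2} to produce both (a) and (b). The main technical obstacle I anticipate is verifying that the single random variable $\mcl E$ supplied by Lemma \ref{L:intcontrol2} uniformly controls the expected stochastic integral along \emph{every} admissible SDE trajectory (drift $\alpha$, bounded diffusion $\sigma$), so that one can legitimately take $\inf$ or $\inf$--$\sup$ over controls on both sides; I expect this to be packaged into Lemma \ref{L:intcontrol2} via a maximal-type estimate in the sup over admissible controls.
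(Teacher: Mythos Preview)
Your proposal is correct and follows the paper's proof essentially line for line; the uniformity you worry about at the end is already built into the statement of Lemma \ref{L:intcontrol2}, whose single $\mcl E$ controls the expected integral uniformly over \emph{all} admissible $(\alpha,\sigma,X,f)$ in the class, so the $\inf$/$\sup$ over controls passes through without issue. The one point worth making explicit is that the time-change in the stochastic integral must be accompanied by a rescaling of the controlled process itself---setting $\tilde X_r=\eps_1 X((r+r_0)/\eps_2)$, $\tilde\alpha_r=(\eps_1/\eps_2)\alpha$, $\tilde\sigma_r=(\eps_1/\eps_2^{1/2})\sigma$ with respect to a rescaled Brownian motion $\tilde W$---so that the new diffusion satisfies $|\tilde\sigma|\le\sqrt{2\nu}$ and the hypothesis $\|f\|_\infty(1+\|Df\|_\infty+\nu\|D^2f\|_\infty)\le K$ of Lemma \ref{L:intcontrol2} applies with a constant $K$ independent of $\eps_1,\eps_2$.
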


\begin{proof}
	{\it Step 1.} For $(x,t) \in B_R \times [0,1]$, define $\tilde w(x,t) := w(x,-t)$, $\tilde w_\pm(x,t) := w_\pm(x,-t)$, and $\tilde B(t) := B(0) - B(-t)$. Then $\tilde B: [0,1] \times \Omega \to \RR^m$ is a Brownian motion, and $\tilde w$, $\tilde w_\pm$ solve
	\[
		\begin{dcases}
			-\del_t\tilde w - \frac{\eps_2}{\eps_1^2} \nu m_+(D^2 \tilde w)
			+ \frac{1}{A} |D\tilde w|^q - \pars{ \frac{\eps_2}{\eps_1}}^{q'} A \le  \pars{ \frac{\eps_2}{\eps_1}}^{q'} f(\eps_1 x) \cdot \dot {\tilde B}(-r_0 + \eps_2 t) & \text{and}\\
			-\del_t\tilde w - \frac{\eps_2}{\eps_1^2} \nu m_-(D^2 \tilde w)
			+ A|D\tilde w|^q +  \pars{ \frac{\eps_2}{\eps_1}}^{q'}A \ge  \pars{ \frac{\eps_2}{\eps_1}}^{q'} f(\eps_1 x) \cdot \dot {\tilde B}(-r_0 + \eps_2 t) & \text{in } B_R \times [0,1]
		\end{dcases}
	\]
	and
	\[
		\begin{dcases}
			- \del_t\tilde w_{-} - \frac{\eps_2}{\eps_1^2} \nu m_-(D^2 \tilde w_-) + 2A |D\tilde w_-|^q = 0 & \text{and} \\
			- \del_t\tilde w_{+} - \frac{\eps_2}{\eps_1^2} \nu m_+(D^2 \tilde w_+) + \frac{1}{2A} |D\tilde w_+|^q = 0 & \text{in } \mcl C \times [-t_0,-t_1), \text{ and}\\
			\tilde w_- = \tilde w_+ = \tilde w & \text{on } (\mcl C \times \{-t_1\}) \cup (\del \mcl C \times [-t_0,-t_1]).
		\end{dcases}
	\]
	
	{\it Step 2.} Let $W: [0,1] \times \mcl A \to \RR$ be a Brownian motion defined on a probability space $(\mcl A, \mcl F, \mbb P)$ independent of $(\Omega,\mbf F, \mbf P)$, fix $(\alpha,\beta) \in \mathscr C \times \mathscr C_{\eps_1^{-1} \sqrt{2 \eps_2 \nu}}$, assume that $X: [-t_0,-t_1] \times \mcl A$ is adapted with respect to $W$ and
	\[
		dX_r = \alpha_r dr + \sigma_r dW_r \quad \text{in } [-t_0,-t_1],
	\]
	and let $\tau \in [-t_0,-t_1]$ be a $W$-stopping time. 
	
	For $r_0 - \eps_2 t_0 \le r \le -r_0 + \eps_2 \tau$, we then set
	\[
		\left\{
		\begin{split}
		\tilde X_r &= \eps_1 X\pars{ \frac{r +  r_0}{\eps_2}},\\
		\tilde \alpha_r &= \frac{\eps_1}{\eps_2} \alpha\pars{ \frac{r + r_0}{\eps_2}},\\
		\tilde \sigma_r &= \frac{\eps_1}{\eps_2^{1/2}} \sigma \pars{ \frac{r + r_0}{\eps_2}}, \quad \text{and}\\
		\tilde W_r &= \eps_2^{1/2} \left[ W \pars{ \frac{r + r_0}{\eps_2}} - W(-t_0) \right],
		\end{split}
		\right.
	\]
	and we let $\tilde {\mathscr C}$ and $\tilde{\mathscr C}_M$ be defined just as $\mathscr C$ and $\mathscr C_M$, but with respect to the filtration of the Brownian motion $\tilde W$. Then $(\tilde \alpha, \tilde \sigma) \in \tilde{\mathscr C} \times \tilde{\mathscr C}_{\sqrt{2\nu}}$, $\tilde X$ is adapted with respect to $\tilde W$, $-r_0 + \eps_2 \tau$ is a $\tilde W$-stopping time, and
	\[
		d\tilde X_r = \tilde \alpha_r dr +\tilde \sigma_r d\tilde W_r \quad \text{for } - r_0 - \eps_2 t_0 \le r \le -r_0 + \eps_2 \tau.
	\]
	It now follows from Lemma \ref{L:intcontrol2} that, for some $\mcl E$ as in the statement of that lemma, and for all $0 < \delta \le 1$,
	\begin{equation}\label{intcontroluse2}
	\begin{split}
		&\abs{ \mbb E \left[ \pars{ \frac{\eps_2}{\eps_1}}^{q'} \int_{-t_0}^\tau f(\eps_1 X_r) \cdot \dot{\tilde B}(-r_0 +\eps_2 r)dr \right]}\\
		&= \frac{\eps_2^{q'-1}}{\eps_1^{q'}} \abs{ \mbb E \int_{-r_0 - \eps_2 t_0}^{-r_0 + \eps_2 \tau} f(\tilde X_r) \cdot \dot{\tilde B}(r)dr }\\
		&\le \frac{\eps_2^{q'-1}}{\eps_1^{q'}} \delta^{q'}  \mbb E \int_{-r_0 - \eps_2 t_0}^{-r_0 + \eps_2 \tau} \abs{ \tilde \alpha_r}^{q'}dr + \frac{\eps_2^{q'-1 + \kappa}}{\eps_1^{q'}} \frac{\mcl E}{\delta^q} (\tau + t_0)^\kappa \\
		&= \delta^{q'}  \mbb E  \int_{- t_0}^{\tau} \abs{ \alpha_r}^{q'}dr + \frac{\eps_2^{q'-1 + \kappa}}{\eps_1^{q'}} \frac{\mcl E}{\delta^q} (\tau + t_0)^\kappa.
	\end{split}
	\end{equation}
	
	{\it Step 3.} We now proceed with the proof of the lower bound. By Lemma \ref{L:HJBformula}(a), we have
	\begin{equation}\label{tildewinhomogge}
	\begin{split}
		\tilde w(x_0,-t_0) \ge \inf_{(\alpha, \sigma) \in \mathscr C \times \mathscr C_{\eps_1^{-1} \sqrt{2\eps_2 \nu}} } &\mbb E \left[ \tilde w(X_\tau,\tau) + c_q A^{-(q'-1)} \int_{-t_0}^\tau |\alpha_r|^{q'}dr - \pars{ \frac{\eps_2}{\eps_1}}^{q'} A(\tau + t_0) \right. \\
		&+ \left. \pars{ \frac{\eps_2}{\eps_1}}^{q'} \int_{-t_0}^\tau f(\eps_1 X_r) \cdot \dot{\tilde B}(-r_0 +\eps_2 r)dr \right],
	\end{split}
	\end{equation}
	where, as in that lemma, for fixed $(\alpha, \sigma) \in \mathscr C \times \mathscr C_{\eps_1^{-1} \sqrt{2\eps_2 \nu}}$, $X = X^{\alpha,\sigma}$ and $\tau = \tau^{\alpha,\sigma}$ satisfy
	\begin{equation}\label{stochdata}
		dX_r = \alpha_r dr + \sigma_r dW_r \quad \text{for } r \in [-t_0,-t_1], \quad X_{-t_0} = x_0, \quad \text{and} \quad
		\tau := \inf\left\{ t \in [-t_0, -t_1] : X_\tau \in \del \mcl C \right\}.
	\end{equation}
	We now set
	\[
		\delta := 1 \wedge c_q^{1/q'} (1 - 2^{-(q'-1)})^{1/q'} A^{-1/q},
	\]
	which implies, in particular, that $\delta^{q'} \le c_q (1 - 2^{-(q'-1)}) A^{-(q'-1)}$. Invoking \eqref{intcontroluse2}, we find that, for some constant $C_q > 0$,
	\begin{align*}
		& \mbb E \left[ \pars{ \frac{\eps_2}{\eps_1}}^{q'}  \int_{-t_0}^\tau f(\eps_1 X_r) \cdot \dot{\tilde B}(-r_0 +\eps_2 r)dr \right]\\
		&\ge -c_q (1- 2^{-(q'-1)} )A^{-(q'-1)}  \mbb E\int_{-t_0}^\tau |\alpha_r|^{q'} dr - C_q A \frac{\eps_2^{q'-1 + \kappa}}{\eps_1^{q'}} \mcl E.
	\end{align*}
	The inequality \eqref{tildewinhomogge} now becomes
	\begin{align*}
		\tilde w(x_0,-t_0) &\ge \inf_{(\alpha, \sigma) \in \mathscr C \times \mathscr C_{\eps_1^{-1} \sqrt{2\eps_2 \nu}} } \mbb E \left[ \tilde w(X_\tau,\tau) + c_q (2A)^{-(q'-1)} \int_{-t_0}^\tau |\alpha_r|^{q'}dr \right] \\
		& \qquad- \pars{ \frac{\eps_2}{\eps_1}}^{q'}  A \left[ 1 +  C_q \eps_2^{-(1-\kappa)}\mcl E \right]\\
		&\ge \tilde w_-(x_0,-t_0) -\frac{ \eps_2^{q'-1 + \kappa}}{\eps_1^{q'}} A (1 + C_q \mcl E).
	\end{align*}
	
	{\it Step 4.} We next obtain the upper bound. Lemma \ref{L:HJBformula}(b) gives
	\begin{equation}\label{tildewinhomogle}
	\begin{split}
		\tilde w(x_0,-t_0) \le \inf_{\alpha \in \mathscr C} \sup_{\beta \in \mathscr S_{\eps_1^{-1} \sqrt{2\eps_2 \nu}} } &\mbb E \left[ \tilde w(X_\tau,\tau) + c_q A^{q'-1} \int_{-t_0}^\tau |\alpha_r|^{q'}dr + \pars{ \frac{\eps_2}{\eps_1}}^{q'} A(\tau + t_0) \right. \\
		&+ \left. \pars{ \frac{\eps_2}{\eps_1}}^{q'} \int_{-t_0}^\tau f(\eps_1 X_r) \cdot \dot{\tilde B}(-r_0 +\eps_2 r)dr \right],
	\end{split}
	\end{equation}
	where, as in that lemma, for fixed $\alpha \in \mathscr C$ and $\beta \in \mathscr S_{\eps_1^{-1} \sqrt{2\eps_2 \nu}}$ with $\sigma = \beta(\alpha)$, $X = X^{\alpha,\sigma}$ and $\tau = \tau^{\alpha,\sigma}$ are as in \eqref{stochdata}. The inequality \eqref{intcontroluse2} then implies that, for all $\delta \in (0,1)$,
	\begin{align*}
		\tilde w(x_0,-t_0) \le \inf_{\alpha \in \mathscr C} \sup_{\beta \in \mathscr S_{\eps_1^{-1} \sqrt{2\eps_2 \nu}} } \mbb E \left[ \tilde w(X_\tau,\tau) + (c_q A^{q'-1} + \delta^{q'})  \int_{-t_0}^\tau |\alpha_r|^{q'}dr \right] + \pars{ \frac{\eps_2}{\eps_1}}^{q'} A  + \frac{\eps_2^{q'-1+\kappa}}{\eps_1^{q'}} \frac{\mcl E}{\delta^q}.
	\end{align*}
	We then set
	\[
		\delta = 1 \wedge (2^{q'-1} - 1)^{1/q'} c_q^{1/q'} A^{1/q},
	\]
	which, in particular, implies that $\delta^{q'} \le c_q (2^{q'-1} - 1) A^{q'-1}$, and so, for some $C_q' > 0$,
	\begin{align*}
		\tilde w(x_0,-t_0) &\le \inf_{\alpha \in \mathscr C} \sup_{\beta \in \mathscr S_{\eps_1^{-1} \sqrt{2\eps_2 \nu}} } \mbb E \left[ \tilde w(X_\tau,\tau) + c_q (2A)^{q'-1}\int_{-t_0}^\tau |\alpha_r|^{q'}dr \right] + \frac{\eps_2^{q'-1+\kappa}}{\eps_1^{q'}} A (1 + C_q' \mcl E)\\
		&= \tilde w_+(x_0,-t_0) + \frac{\eps_2^{q'-1+\kappa}}{\eps_1^{q'}} A (1 + C_q' \mcl E).
	\end{align*}
		The claimed upper bound for $w$ now follows from another time reversal. 
\end{proof}

We now introduce some smooth sub- and super-solutions of the homogenous second order equations that arise in the previous result, which will be used in Section \ref{S:secondorder}. The following lemma is proved in \cite{CS}, in particular, as Lemmas 4.2 and 4.6 and Corollary 4.3.

\begin{lemma}\label{L:CSbarriers}
	Let $q > 2$ and $A > 1$. Then there exist $C = C(q,A,d) > 0$ (which can be chosen arbitrarily large), $\nu_0 = \nu_0(q,A,d) > 0$ (which can be chosen arbitrarily small), and $\theta_0 = \theta_0(q,A,d) > 0$ such that the following hold:
	\begin{enumerate}[(a)]
		\item If $\eta > 0$,
		\[
			U(x,t) := C \frac{ (|x|^2 + \eta t)^{q'/2}}{t^{q'-1}} \quad \text{for } (x,t) \in \RR^d \times (0,\oo),
		\]
		and $0 < \nu < \eta \nu_0$, then
		\[
			\del_tU - \nu m_+(D^2 U) + \frac{1}{2A} |DU|^q \ge 0 \quad \text{in } \RR^d \times (0,\oo).
		\]
		
		\item Let $R > 0$, and assume that $b: \RR \to \RR$ is smooth and nonincreasing, $b(\tau) =1$ for $\tau < 3/4$, and $b(\tau) = 0$ for $\tau > 1$. If $0 < \theta < \theta_0R^{q'}$ and
		\[
			V(x,t) := 3\theta b \pars{ \frac{|x|}{R} + \frac{t}{4} } - \frac{C\nu \theta}{R^2} t \quad \text{for } (x,t) \in \RR^d \times (0,1),
		\]
		then
		\[
			\del_tV - \nu m_-(D^2 V) + 2A |DV|^q \le 0 \quad \text{in } \RR^d \times (0,1).
		\]
	\end{enumerate}
\end{lemma}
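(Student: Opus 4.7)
The plan is to verify each inequality by direct computation, exploiting the radial structure in $x$: for a radial function $f(|x|)$ the Hessian has radial eigenvalue $f''(r)$ and $(d-1)$ repeated transverse eigenvalues $f'(r)/r$, so each statement reduces to a scalar algebraic inequality in one variable.

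For part (a), setting $\phi := |x|^2 + \eta t$ the chain rule gives
\[
\partial_t U = C\phi^{(q'-2)/2}t^{-q'}\Bigl[\tfrac{q'\eta t}{2} - (q'-1)\phi\Bigr], \qquad |DU|^q = C^q(q')^q \phi^{(q'-q)/2} t^{-q'}|x|^q,
\]
with radial Hessian eigenvalue $Cq'\phi^{(q'-2)/2}t^{-(q'-1)}\cdot\tfrac{(q'-1)|x|^2+\eta t}{\phi}$ and transverse eigenvalue $Cq'\phi^{(q'-2)/2}t^{-(q'-1)}$. Since $q' < 2$ the transverse one is the larger and is positive, so it equals $m_+(D^2 U)$. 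Introducing $\rho := |x|/\sqrt\phi \in [0,1]$ (so $t = (1-\rho^2)\phi/\eta$) and dividing through by the positive factor $C\phi^{q'/2}t^{-q'}$ turns the inequality into $g(\rho) \ge 0$ with
\[
g(\rho) = \Bigl[\tfrac{2-q'}{2} - \tfrac{\nu q'}{\eta}\Bigr] - \tfrac{q'}{2}\Bigl(1-\tfrac{2\nu}{\eta}\Bigr)\rho^2 + \tfrac{C^{q-1}(q')^q}{2A}\rho^q.
\]
The endpoint $\rho = 0$ forces $\nu \le \eta\nu_0$ with, e.g., $\nu_0 = (2-q')/(4q')$ (which can be chosen even smaller), yielding $g(0) \ge (2-q')/4 > 0$. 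The unique interior critical point is $\rho_\ast = [2\alpha/(q\beta)]^{1/(q-2)}$, where $\alpha,\beta$ are the coefficients of $\rho^2$ and $\rho^q$, and one computes $g(\rho_\ast) = g(0) - \alpha(q-2)\rho_\ast^2/q$; since $\rho_\ast \to 0$ as $C \to \infty$, choosing $C$ sufficiently large absorbs the interior dip into $g(0)$, and $g(1) = -(q'-1) + \beta$ is automatically nonnegative once $C$ is large, giving $g \ge 0$ on $[0,1]$.

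For part (b), the key observation is that $b'$ is supported in $\{\tau \in [3/4,1]\}$ with $\tau := |x|/R + t/4$, which forces $|x| \ge R/2$ whenever $b'(\tau) \ne 0$ and $t \in [0,1]$. A short computation gives $\partial_t V = \tfrac{3\theta}{4} b'(\tau) - C\nu\theta/R^2$, $|DV| = \tfrac{3\theta}{R}|b'(\tau)|$, and Hessian eigenvalues $\tfrac{3\theta}{R^2}b''(\tau)$ (radial) and $\tfrac{3\theta}{R|x|}b'(\tau) \le 0$ (transverse). Using $|x| \ge R/2$ on the support of $b'$ one finds $|m_-(D^2 V)| \le K_b\,\theta/R^2$ for a constant $K_b$ depending on $\nor{b'}{\oo}$ and $\nor{b''}{\oo}$, so the drift $-C\nu\theta/R^2$ built into $V$ absorbs $-\nu m_-(D^2 V)$ once $C \ge K_b$. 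What remains is the scalar inequality $\tfrac{3\theta}{4}b'(\tau) + 2A(3\theta/R)^q|b'(\tau)|^q \le 0$, which factors as $|b'(\tau)|\bigl[2A\cdot 3^q \theta^{q-1}|b'(\tau)|^{q-1}R^{-q} - \tfrac{3}{4}\bigr] \le 0$ and therefore reduces to $\theta^{q-1} \le c(q,A) R^q / \nor{b'}{\oo}^{q-1}$; since $q/(q-1) = q'$, this is precisely the smallness $\theta \le \theta_0 R^{q'}$ for an appropriate $\theta_0 = \theta_0(q,A,d) > 0$.

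The only subtle point is the positivity analysis of $g$ in part (a): the bad $-\alpha\rho^2$ term is largest in the middle of $[0,1]$, where the helpful gradient term $\beta\rho^q$ has not yet taken over, so endpoint checks or a convexity argument alone do not suffice. The critical-point calculation above converts the qualitative choice ``$C$ large'' into the quantitative bound $\rho_\ast \to 0$, which is what actually rules out an interior dip and closes the argument. Part (b) is then essentially bookkeeping once the $|x| \ge R/2$ support property of $b'$ is noted, since it lets the degenerate-elliptic term be absorbed into the drift uniformly in $\nu$.
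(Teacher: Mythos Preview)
The paper does not prove this lemma; it merely cites \cite{CS} (specifically Lemmas~4.2 and~4.6 and Corollary~4.3 there). Your direct computation is correct and is the standard verification: reduce to radial variables, identify the relevant Hessian eigenvalue, and analyze the resulting one-variable inequality. One small remark: your constants $C$ and $\theta_0$ in part~(b) depend on $\nor{b'}{\oo}$ and $\nor{b''}{\oo}$, so strictly speaking they depend on the choice of cutoff $b$ and not only on $(q,A,d)$; this is a harmless imprecision already present in the lemma's statement, since in the application $b$ is a fixed auxiliary function.
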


\subsection{Improvement of oscillation}

The main tool used in this paper is to establish an improvement of oscillation of solutions on all small scales. The next result explains how this leads to H\"older regularity estimates.

\begin{lemma}\label{L:ioo}
	Let $R, \tau, c > 0$, assume that $u: B_R \times [-\tau, 0]$ satisfies
	\[
		0 \le u \le c \quad \text{on } B_R \times [-\tau,0],
	\]
	fix $\alpha \in (0,1)$, $\beta > 0$, $0 < \mu < 1$, and $0 < a < R$ and $0 < b < \tau$. Assume that, whenever $(x_0,t_0) \in B_{R-a} \times [-\tau+b,0]$, the function
	\[
		v(x,t) := \frac{u(x_0 + ax, t_0 + bt)}{c} \quad \text{for } (x,t) \in B_1 \times [-1,0]
	\]
	satisfies
	\[
		\text{if} \quad 0 < r \le 1 \quad \text{and} \quad \osc_{B_r \times [-r^\beta,0]} v \le r^\alpha, \quad \text{then} \quad  \osc_{B_{\mu r} \times [-(\mu r)^\beta,0]} \le (\mu r)^\alpha.
	\]
	Then
	\[
		 \sup_{(x,t),(\tilde x, \tilde t) \in B_{R-a} \times [-\tau+b,0]}  \frac{|u(x,t) - u(\tilde x, \tilde t)|}{|x-\tilde x|^\alpha + |t - \tilde t|^{\alpha/\beta}} \le \frac{c}{\mu^\alpha} \pars{ \frac{1}{a^\alpha} \vee \frac{1}{b^{\alpha/\beta}}}.
	\]
\end{lemma}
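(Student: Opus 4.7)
The plan is to iterate the hypothesis around each base point to get a pointwise modulus of continuity, then turn this into a two-point Hölder estimate by choosing the iteration scale appropriately in terms of the spatial and temporal separations.

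First, I would fix an arbitrary base point $(x_0,t_0) \in B_{R-a} \times [-\tau+b,0]$ and define $v$ as in the statement. Since $0 \le u \le c$ implies $0 \le v \le 1$, the base case $\osc_{B_1 \times [-1,0]} v \le 1 = 1^\alpha$ holds. Applying the hypothesis inductively with $r = \mu^n$ yields
\[
	\osc_{B_{\mu^n} \times [-\mu^{n\beta},0]} v \le \mu^{n\alpha} \quad \text{for every } n \ge 0.
\]
Unscaling, this says that whenever $(\tilde x,\tilde t) \in B_R \times [-\tau,0]$ satisfies $|\tilde x - x_0| \le a\mu^n$ and $-b\mu^{n\beta} \le \tilde t - t_0 \le 0$, we have $|u(\tilde x,\tilde t) - u(x_0,t_0)| \le c\mu^{n\alpha}$.

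Next I would turn this one-point estimate into a two-point estimate. Take $(x,t), (\tilde x,\tilde t) \in B_{R-a} \times [-\tau+b,0]$, and, by swapping if necessary, assume $\tilde t \le t$, so that we may use the previous step with base point $(x,t)$. Define
\[
	\rho := \max\left\{ \frac{|\tilde x - x|}{a}, \left(\frac{t - \tilde t}{b}\right)^{1/\beta} \right\}.
\]
If $\rho \le 1$, let $n \ge 0$ be the unique integer with $\mu^{n+1} < \rho \le \mu^n$; then $|\tilde x - x| \le a\mu^n$ and $t - \tilde t \le b\mu^{n\beta}$, so the previous step gives $|u(x,t) - u(\tilde x,\tilde t)| \le c\mu^{n\alpha} \le c\rho^\alpha/\mu^\alpha$. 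Since $\rho^\alpha \le (|\tilde x - x|/a)^\alpha + ((t-\tilde t)/b)^{\alpha/\beta}$, we obtain
\[
	|u(x,t) - u(\tilde x,\tilde t)| \le \frac{c}{\mu^\alpha}\left(\frac{1}{a^\alpha} \vee \frac{1}{b^{\alpha/\beta}}\right)\bigl(|x - \tilde x|^\alpha + |t - \tilde t|^{\alpha/\beta}\bigr).
\]
If instead $\rho > 1$, then either $|x - \tilde x| > a$ or $|t - \tilde t| > b$, so $|x - \tilde x|^\alpha + |t - \tilde t|^{\alpha/\beta} > a^\alpha \wedge b^{\alpha/\beta}$, and the same bound follows from the trivial estimate $|u(x,t) - u(\tilde x,\tilde t)| \le c$ (noting $\mu^\alpha < 1$).

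There is no real obstacle here beyond bookkeeping; the only subtlety is the one-sidedness of the hypothesis in the time variable, which is handled by choosing the base point at the \emph{later} of the two times so that the other point sits in the backward cylinder. The two scales $a$ and $b$ are treated asymmetrically via the $\beta$-exponent, which is exactly how the statement of the lemma normalises them.
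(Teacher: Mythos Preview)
Your proof is correct and follows essentially the same route as the paper: iterate the improvement-of-oscillation hypothesis at dyadic scales $\mu^n$ around a fixed base point, interpolate to arbitrary $r$ via $\mu^{n+1} < r \le \mu^n$, and handle the far-apart case with the trivial bound $|u - u| \le c$. Your explicit remark about choosing the base point at the later time to accommodate the one-sided (backward) cylinders is in fact more careful than the paper, which leaves this implicit.
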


\begin{proof}
	Choose $(x_0,t_0) \in B_{R - a} \times [-\tau + b,0]$ and define $v$ as in the statement of the lemma. Then $\osc_{B_1 \times [-1,0]} v \le 1$, and so an inductive argument implies that
	\[
		\osc_{B_{\mu^k} \times [-\mu^{k\beta},0]} v \le \mu^{k\alpha} \quad \text{for all } k = 0,1,2,\ldots
	\]
	Now choose $r \in (0,1]$ and let $k \in \NN$ be such that $\mu^{k+1} < r \le \mu^k$. Then
	\[
		\osc_{B_r \times [-r^\beta,0]} v \le \mu^{k\alpha} \le \frac{r^\alpha}{\mu^\alpha}.
	\]
	Fix $(y,s) \in B_1 \times [-1,0]$ and set $r := |y| \vee |s|^{1/\beta}$. We then have
	\[
		|v(0,0) - v(y,s)| \le \frac{r^\alpha}{\mu^\alpha} \le \frac{|y|^\alpha \vee |s|^{\alpha/\beta}}{\mu^\alpha}.
	\]
	Rescaling back to $u$, this means that, whenever  $(x,t),(\tilde x,\tilde t) \in B_{R-a} \times [-\tau+b,0]$ satisfy 
	\[
		|x - \tilde x| \le a \quad \text{and} \quad |t - \tilde t| \le b,
	\]
	we have
	\[
		|u(x,s) - u(\tilde x, \tilde t)| \le \frac{c}{\mu^\alpha} \pars{ \frac{1}{a^\alpha} \vee \frac{1}{b^{\alpha/\beta}}} \pars{ |x-\tilde x|^\alpha + |t - \tilde t|^{\alpha/\beta}}.
	\]
	The result now follows easily, because, for $|x - \tilde x| > a$,
	\[
		\frac{|u(x,t) - u(\tilde x, \tilde t)|}{|x-\tilde x|^\alpha + |t - \tilde t|^{\alpha/\beta}} \le \frac{c}{a^\alpha}
	\]
	and if $|t - \tilde t| > b$, then
	\[
		\frac{|u(x,t) - u(\tilde x, \tilde t)|}{|x-\tilde x|^\alpha + |t - \tilde t|^{\alpha/\beta}} \le \frac{c}{b^{\alpha/\beta}}.
	\]
\end{proof}

\section{First order equations}\label{S:firstorder}

In this section, we prove the regularity results for first order equations. We assume that
\begin{equation}\label{A:BM1}
	B: [-1,0] \times \Omega \to \RR^m \text{ is a standard Brownian motion on some probability space } (\Omega, \mbf F, \mbf P),
\end{equation}
and, for fixed
\begin{equation}\label{A:parameters}
	K > 0, \quad A > 1, \quad q > 1, \quad \text{and} \quad \mcl S: \Omega \to [0,\oo),
\end{equation}
we assume that
\begin{equation}\label{A:fauxHolderbound}
	f \in C^1(\RR^d \times \RR^m) \quad \text{and} \quad \nor{f}{\oo} + \nor{f}{\oo} \nor{Df}{\oo} \le K
\end{equation}
and
\begin{equation}\label{E:firstorder}
	\left\{
	\begin{split}
	&du + \left[\frac{1}{A} |Du|^q - A\right] dt \le \sum_{i=1}^m f^i(x) dB^i(t), \\
	&du + \left[ A |Du|^q + A \right] dt \ge \sum_{i=1}^m f^i(x) dB^i(t), \quad \text{and} \\
	&0 \le u \le \mcl S \quad \text{in } B_1 \times [-1,0].
	\end{split}
	\right.
\end{equation}



\begin{theorem}\label{T:firstorder}
	Assume \eqref{A:BM1} - \eqref{E:firstorder}, and let $0 < \kappa < 1/2$ and $M \ge 1$. Then there exists $\alpha = \alpha(\kappa,A,q) \in (0,1)$, $c = c(\kappa,\alpha,q) > 0$, $\lambda_0 = \lambda_0(\kappa,A,K,M,q) > 0$ and, for all $p \ge 1$, $C = C(\kappa,A, K, M, p,q) > 0$ such that, for all $\lambda \ge \lambda_0$,
	\[
		\mbf P \pars{ \sup_{(x,s),(y,t) \in B_{1/2}\times [-1/2,0] } \frac{ |u(x,s) - u(y,t)|}{|x-y|^\alpha + |s-t|^{\alpha/(q-\alpha(q-1))}} > \lambda} \le \mbf P \pars{ (\mcl S - M)_+ > c\lambda^{1 -\alpha/q'} } + \frac{ C \nor{f}{\oo}^p}{ \lambda^{\kappa(q - \alpha(q-1)) p}}
	\]
\end{theorem}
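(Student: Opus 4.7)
The plan is to verify the improvement-of-oscillation hypothesis of Lemma \ref{L:ioo} with exponents $\alpha > 0$ small and $\beta := q - \alpha(q-1)$, from which the H\"older estimate on $B_{1/2} \times [-1/2, 0]$ follows upon taking $a = b = 1/2$ in that lemma. Concretely, the goal is to show that, on a high-probability event, for every $(x_0, t_0) \in B_{1/2} \times [-1/2, 0]$ and every scale $r \in (0, 1]$, the oscillation bound $\osc_{B_r(x_0) \times [t_0 - r^\beta, t_0]} u \le c r^\alpha$ propagates to scale $\mu r$ for some universal $\mu \in (0, 1)$.

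To establish the improvement of oscillation at scale $r$, I zoom in and normalize via
\[
	\tilde u(y, s) := (c r^\alpha)^{-1} \pars{u(x_0 + r y, t_0 + r^\beta s) - m_r},
\]
with $m_r$ the infimum over the cylinder, so that $\tilde u \in [0, 1]$ on $B_1 \times [-1, 0]$. The identities $\beta - \alpha = q(1 - \alpha)$ and $\beta(q' - 1) = q' - \alpha$ (the latter from $(q-1)(q'-1) = 1$) show that $\tilde u$ satisfies the hypotheses of Lemma \ref{L:Dbarrier} with $\eps_1 = r$ and $\eps_2 = r^\beta$. The lemma sandwiches $\tilde u$ on an inner cylinder $\mcl C \times (-1, 0)$ between deterministic sub/super-solutions $\tilde u_\pm$ of the homogeneous equations $\partial_s \tilde u_\pm + c_\pm |D \tilde u_\pm|^q = 0$ sharing the same parabolic boundary trace, up to an additive error of order $r^{\beta \kappa - \alpha} A \mcl D$. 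For the deterministic equations, a dichotomy based on the Hopf--Lax representation (Lemma \ref{L:HJformula}) and the distribution of $\tilde u$ at the bottom time slice produces universal $\mu \in (0, 1)$ and $\eta_0 > 0$ such that $\osc_{B_\mu \times [-\mu^\beta, 0]} \tilde u_\pm \le 1 - \eta_0$. Choosing $\alpha$ small enough that $\mu^\alpha > 1 - \eta_0$ strictly closes the inductive step provided $r^{\beta \kappa - \alpha} A \mcl D \le (\mu^\alpha - (1 - \eta_0))/2$. The constraint $\alpha < \beta \kappa$ makes the error exponent positive and, combined with $r \le 1$, reduces this requirement to the single condition $A \mcl D \le c_1$ for a universal $c_1 > 0$.

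The $\lambda$-dependence in the tail estimate arises from an additional self-similar rescaling of $u$: the substitution $u \mapsto \eta^{-1} u$, $x \mapsto \eta^{-1/q'} x$, $t$ unchanged, respects the scaling of the Hamiltonian $|Du|^q$ and multiplies the H\"older semi-norm with exponent $\alpha$ by the factor $\eta^{\alpha/q' - 1}$. Choosing $\eta$ as an appropriate power of $\lambda$, the conditions that the rescaled problem sit on $B_1 \times [-1, 0]$ with $L^\infty$ norm comparable to $M$ and that $A \mcl D$ satisfy the bound above on the rescaled scale translate, on the original scale, into $(\mcl S - M)_+ \le c \lambda^{1 - \alpha/q'}$ and $\mcl D \le c \lambda^{\kappa \beta}$; the failure event is then contained in the union of the two complementary events, and Lemma \ref{L:Dbarrier}(a) furnishes the tail $C \nor{f}{\oo}^p / \lambda^{\kappa \beta p}$ for the second one. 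The main technical obstacle is the scaling bookkeeping in this last step—balancing $\eta = \eta(\lambda)$ so that the rescaled problem's domain, coercivity constants, and forcing regularity are simultaneously in the ranges required by the iteration—and secondarily the dichotomy argument for the homogeneous equations, which must exploit the Hopf--Lax formula and the superlinearity of $|\cdot|^q$ to transport information between time slices at finite speed.
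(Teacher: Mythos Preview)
Your overall strategy---improvement of oscillation via Lemma~\ref{L:ioo}, comparison with the homogeneous equation through Lemma~\ref{L:Dbarrier}, and a Hopf--Lax dichotomy at each scale---is exactly the paper's. The gap is in the rescaling bookkeeping, and it is a real one.

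The claim that the zoomed-in function $\tilde u(y,s) = (cr^\alpha)^{-1}\bigl(u(x_0+ry,t_0+r^\beta s)-m_r\bigr)$ satisfies the hypotheses of Lemma~\ref{L:Dbarrier} with $\eps_1=r$, $\eps_2=r^\beta$ is only correct when the normalizing constant $c$ equals $1$. A direct computation shows that $\tilde u$ satisfies
\[
\partial_s\tilde u+\frac{c^{q-1}}{A}|D\tilde u|^q-c^{-1}r^{\beta-\alpha}A\le c^{-1}r^{\beta-\alpha}f(x_0+ry)\cdot\dot B(t_0+r^\beta s),
\]
so both the coercivity constants and the prefactor of the forcing carry the extra factor $c^{q-1}$ (resp.\ $c^{-1}$). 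If you take $a=b=1/2$ and $c=\mcl S$ in Lemma~\ref{L:ioo}, the constants $\mu,\theta$ (and hence $\alpha$) in the oscillation step would then depend on the random $\mcl S$, which breaks the iteration. Your proposed remedy, the self-similar rescaling $u\mapsto\eta^{-1}u$, $x\mapsto\eta^{-1/q'}x$, $t$ unchanged, does preserve the Hamiltonian, but taking $\eta\ge\mcl S$ to force the rescaled function into $[0,1]$ shrinks the spatial domain to $B_{\eta^{-1/q'}}$, which is strictly smaller than $B_{1/2}$ whenever $\mcl S>2^{q'}$; you then cannot recover the estimate on all of $B_{1/2}\times[-1/2,0]$.

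The paper fixes this with a single combined space–time zoom-in before the iteration: it sets $\hat{\mcl S}=1\vee\mcl S$, chooses a \emph{random} scale $\rho$ satisfying $\rho\hat{\mcl S}\le 1/2$ and $\rho^{\kappa q}A\mcl D\le\theta$, and defines
\[
v(x,t)=\hat{\mcl S}^{-1}u\bigl(x_0+\rho\hat{\mcl S}\,x,\;t_0+\rho^q\hat{\mcl S}\,t\bigr).
\]
This rescaling simultaneously (i) normalizes $v$ to $[0,1]$, (ii) preserves the coercivity constants exactly (the factor $\hat{\mcl S}$ cancels because $Dv=\rho\,Du$ and $\partial_t v=\rho^q\,\partial_t u$), and (iii) keeps the domain inside $Q_1$. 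The oscillation iteration is then run on $v$ with $\eps_1=\hat{\mcl S}\rho r$, $\eps_2=\hat{\mcl S}\rho^q r^\beta$, and the error from Lemma~\ref{L:Dbarrier} collapses to $\rho^{\kappa q}A\mcl D\le\theta$ uniformly in $r$. The H\"older constant comes out as $C\rho^{-q/\beta}$, and the tail estimate follows by unpacking the definition of $\rho$. Your exponents $1-\alpha/q'$ and $\kappa\beta$ are correct; what is missing is precisely this joint space–time zoom-in that makes the normalization compatible with both the equation and the domain.
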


\begin{proof}
	We first specify the parameters that determine the H\"older exponents, which depend only on $\kappa$, $A$, and $q$. Choose $\mu$ so that
	\begin{equation}\label{mu1}
		0 < \mu < \frac{1}{2} \quad \text{and} \quad 
		 \frac{1}{2} 12^{q'} c_q A^{q'-1} \mu^{q'} < 1,
	\end{equation}
	and then take $\theta$ sufficiently small that
	\begin{equation}\label{theta1}
		0 < \theta < \frac{1}{2}, \quad \frac{1}{2} 12^{q'} c_q A^{q'-1} \mu^{q'}  \le 1 - 4\theta,
		\quad \text{and} \quad
		2\theta \le c_q (2A)^{1-q'} \mu^{q'}.
	\end{equation}
	We now set
	\begin{equation}\label{alpha1}
		\alpha = \min \left(  \frac{ \log(1-\theta)}{\log \mu}, \frac{\kappa q}{\kappa q + 1 - \kappa} \right)
	\end{equation}
	and
	\begin{equation}\label{beta}
		\beta := q - \alpha(q-1).
	\end{equation}
	Note that $\beta - \alpha = q(1-\alpha) > 0$, and \eqref{alpha1} and \eqref{beta} together imply that $\beta\kappa - \alpha > 0$.
	
	We next identify a random scale $\rho$ at which the improvement of oscillation effect is seen. Let $\mcl D$ be the random variable as in Lemma \ref{L:Dbarrier}, set
	\[
		 \hat{\mcl S} := 1 \vee \mcl S,
	\]
and define
	\begin{equation}\label{rho1}
		\rho := \frac{1}{2 \hat{\mcl S}} \wedge \pars{ \frac{\theta}{A\mcl D}}^{\frac{1}{\kappa q}} .
	\end{equation}
	Note then that
	\[
		\rho \le 1, \quad \rho \hat{\mcl S} \le \frac{1}{2}, \quad \text{and} \quad \rho^{\kappa q} A\mcl D \le \theta.
	\]
	
	In what follows, for $(x_0,t_0) \in \RR^d \times \RR$, we define
	\[
		Q_r(x_0,t_0) := B_r(x_0) \times [t_0 - r^\beta, t_0] \quad \text{and} \quad Q_r := Q_r(0,0).
	\]
	
	{\it Step 1: The initial zoom-in.} Fix $(x_0,t_0) \in B_{1/2} \times [-1/2,0]$ and set
	\[
		v(x,t) := \frac{u(x_0 + \rho \hat{\mcl S} x, t_0 + \rho^q  \hat{\mcl S} t)}{ \hat{\mcl S}},
	\]
	which is well-defined for $(x,t) \in B_1 \times [-1,0]$ in view of \eqref{rho1}. Then $v$ satisfies
	\begin{equation}\label{E:vsystem}
		\left\{
		\begin{split}
		&\del_tv + A |Dv|^q + \rho^q A \ge \rho^q f(x_0 + \rho  \hat{\mcl S} x) \cdot \dot B(t_0 + \rho^q  \hat{\mcl S} t),\\
		&\del_tv + \frac{1}{A}|Dv|^q - \rho^q A \le \rho^q f(x_0 + \rho \hat{\mcl S} x) \cdot \dot B(t_0 + \rho^q  \hat{\mcl S} t), \quad \text{and}\\
		&0 \le v \le 1 \text{ in } B_1 \times [-1,0].
		\end{split}
		\right.
	\end{equation}	
	
	{\it Step 2: Induction step.} We next show that
	\begin{equation}\label{oscimprove1}
		\text{if} \quad 0 < r \le 1 \quad \text{and} \quad \osc_{Q_r} v \le r^\alpha, \quad \text{then} \quad \osc_{Q_{\mu r}} v \le (\mu r)^\alpha.
	\end{equation}
	Let $r \in (0,1]$ be such that $\osc_{Q_r} v \le r^\alpha$. We then set
	\[
		w(x,t) := \frac{ v(rx,r^\beta t) - \inf_{Q_r} v }{r^\alpha} \quad \text{for } (x,t) \in Q_1,
	\]
	which satisfies
	\[
	\left\{
	\begin{split}
		&\del_tw + \frac{1}{A} |Dw|^q - \pars{ \frac{\eps_2}{\eps_1}}^{q'} A \le \pars{ \frac{\eps_2}{\eps_1}}^{q'}  f(x_0 + \eps_1 x) \cdot \dot B(t_0 + \eps_2 t),\\
		&\del_tw + A|Dw|^q + \pars{ \frac{\eps_2}{\eps_1}}^{q'}  A \ge \pars{ \frac{\eps_2}{\eps_1}}^{q'}  f(x_0 + \eps_1 x) \cdot \dot B(t_0 + \eps_2 t), \text{ and} \\
		& 0 \le w \le 1 \text{ in } B_1 \times [-1,0],
	\end{split}
	\right.
	\]
	where $\eps_1 :=  \hat{\mcl S} \rho r$ and $\eps_2 := \hat{\mcl S} \rho^q r^\beta$. As a consequence of \eqref{rho1}, the random variables $\eps_1$ and $\eps_2$ take values in $(0,1/2]$, so that the hypotheses in part (b) of Lemma \ref{L:Dbarrier} are satisfied. We also compute, using \eqref{alpha1} and \eqref{beta},
	\[
		\frac{\eps_2^{q'-1+\kappa}}{\eps_1^{q'}} = \frac{ ( \hat{\mcl S} \rho^q r^\beta)^{q' - 1 + \kappa}}{( \hat{\mcl S}\rho r)^{q'}} = \frac{\rho^{\kappa q} r^{\beta \kappa - \alpha}}{ \hat{\mcl S}} \le \rho^{\kappa q}.
	\]

	To prove \eqref{oscimprove1}, we show that either
	\begin{equation}\label{upperosc}
		w(x,t) \le 1 - \theta \quad \text{for all } (x,t) \in B_\mu \times [-\mu^\beta,0]
	\end{equation}
	or
	\begin{equation}\label{lowerosc}
		w(x,t) \ge \theta \quad \text{for all } (x,t) \in B_\mu \times [-\mu^\beta,0].
	\end{equation}
	We consider the two following cases:
	
	{\it Case 1.} Assume first that 
	\begin{equation}\label{case1}
		\inf_{B_{2\mu}} w(\cdot,-1) \le 2 \theta.
	\end{equation}
	Fix $(x,t) \in B_\mu \times [-\mu^\beta,0]$. Then, by Lemma \ref{L:Dbarrier}, we have
	\[
		w(x,t) \le w_+(x,t) + \frac{\eps_2^{q' - 1 + \kappa}}{\eps_1^{q'}} A \mcl D \le w_+(x,t) + \rho^{\kappa q} A \mcl D \le w_+(x,t) + \theta,
	\]
	where
	\[
		w_+(x,t) = \inf_{(y,s) \in \del^*(B_{2\mu} \times [-1,t])} \left\{ w(y,s) + c_q (2A)^{q'-1}  \frac{|x - y|^{q'}}{(t-s)^{q'-1}} \right\}.
	\]
	We have
	\[
		t+1 \ge 1 - \mu^\beta \ge \frac{1}{2} \quad \text{and} \quad |x-y|^{q'} \le 3^{q'} \mu^{q'} \quad \text{for all } y \in B_{2\mu},
	\]
	and so, by \eqref{theta1},
	\begin{align*}
		w_+(x,t) &\le \inf_{y \in B_{2 \mu}} \left\{ w(y,-1) + c_q (2A)^{q'-1}  \frac{|x - y|^{q'}}{(t+1)^{q'-1}} \right\} \\
		&\le 6^{q'} c_q (2A)^{q'-1} \mu^{q'} + \inf_{y \in B_{2\mu}} w(y,-1)\\
		&\le 1 - 4\theta + 2\theta = 1 - 2 \theta.
	\end{align*}
	It follows that $w(x,t) \le 1 - 2\theta + \theta = 1 - \theta$, and so \eqref{upperosc} holds in this case.
	
	{\it Case 2.} Assume now that 
	\begin{equation}\label{case2}
		w(y,-1) \ge 2\theta \quad \text{for all } y \in B_{2 \mu}.
	\end{equation}
	Let $(x,t) \in B_{\mu} \times [-\mu^\beta,0]$. Then, similarly as in Step 1, Lemma \ref{L:Dbarrier} gives
	\begin{align*}
		w(x,t) &\ge \inf_{(y,s) \in \del^*(B_{2\mu} \times [-1,t])} \left\{ w(y,s) +  c_q (2A)^{1-q'} \frac{|x-y|^{q'}}{(t-s)^{q'-1}} \right\} - \theta.
	\end{align*}
	If $y \in B_{2 \mu}$ and $s = -1$, then \eqref{case2} implies that
	\[
		 w(y,s) + c_q (2A)^{1-q'} \frac{|y-x|^{q'}}{(t-s)^{q'-1}} - \theta \ge 2\theta - \theta = \theta,
	\]
	while, if $s \in [-1,t]$ and $y \in \del B_{2\mu}$, then $|y-x| \ge \mu$, and so, using \eqref{theta1} and the fact that $w \ge 0$,
	\[
		w(y,s)  + c_q (2A)^{1-q'} \frac{|y-x|^{q'}}{(t-s)^{q'-1}} - \theta
		\ge - \theta + c_q (2A)^{1-q'} \mu^{q'} \ge \theta.
	\]
	Either way, it is evident that \eqref{lowerosc} holds.
	
	Combining \eqref{upperosc} and \eqref{lowerosc}  with the definition of $\alpha$ in \eqref{alpha1}, we obtain
	\[
		\osc_{Q_\mu} w \le 1 - \theta \le \mu^\alpha,
	\]
	which, after rescaling back to $v$, yields
	\[
		\osc_{Q_{\mu r}} v \le (\mu r)^\alpha.
	\]
	
	{\it Step 3: the H\"older estimate.} We now invoke Lemma \ref{L:ioo} with the values
	\[
		a := \rho \hat{\mcl S}, \quad b := \rho^q \hat{\mcl S}, \quad \text{and} \quad c :=  \hat{\mcl S},
	\]
	and, using \eqref{mu1} and \eqref{rho1}, we get, for some constant $C_1 = C_1(\kappa, A, q) > 0$,
	\begin{align*}
		\sup_{(x,t),(\tilde x,\tilde t) \in B_{1/2} \times [-1/2,0]} \frac{|u(x,t) - u(\tilde x, \tilde t)|}{|x - \tilde x|^\alpha + |t - \tilde t|^{\alpha/\beta} }
		&\le \frac{c}{\mu^\alpha} \pars{ \frac{1}{a^\alpha} \vee \frac{1}{b^{\alpha/\beta}}}
		= \frac{1}{\mu^\alpha} \pars{ \frac{ \hat{\mcl S}^{1-\alpha}}{\rho^\alpha} \vee \frac{ \hat{\mcl S}^{1 - \alpha/\beta}}{\rho^{q\alpha/\beta}} }\\
		&\le \frac{1}{\mu^\alpha} \pars{ \frac{1}{2^{1-\alpha} \rho} \vee \frac{1}{2^{1-\alpha/\beta} \rho^{1+(q-1)\alpha/\beta} } }
		\le C_1 \rho^{-q/\beta}.
	\end{align*}
	In view of \eqref{theta1} and \eqref{rho1}, for some $C_2 = C_2(\kappa, A, q) > 0$,
	\[
		\rho^{-q/\beta} = (2  \hat{\mcl S})^{q/\beta} \vee \pars{ \frac{A \mcl D}{\theta}}^{\frac{1}{\kappa \beta}}
		\le C_2 \pars{  \hat{\mcl S}^{q/\beta} + \mcl D^{\frac{1}{\kappa \beta}} }.
	\]
	Since $M$ is chosen to be larger than $1$, we have $(\hat{\mcl S} - M)_+ = (\mcl S - M)_+$, and so, for some $C_3 = C_3(\kappa, A, q) > 0$,
	\[
		\sup_{(x,t),(\tilde x,\tilde t) \in B_{1/2} \times [-1/2,0]} \frac{|u(x,t) - u(\tilde x, \tilde t)|}{|x - \tilde x|^\alpha + |t - \tilde t|^{\alpha/\beta} }
		\le C_3\pars{ M^{q/\beta} + (\mcl S - M)_+^{q/\beta} + \mcl D^{\frac{1}{\kappa\beta}} }.
	\]
	Therefore, for any $\lambda > 0$,
	\begin{align*}
		\mbf P &\pars{ \sup_{(x,t),(\tilde x,\tilde t) \in B_{1/2} \times [-1/2,0]} \frac{|u(x,t) - u(\tilde x, \tilde t)|}{|x - \tilde x|^\alpha + |t - \tilde t|^{\alpha/\beta} } > \lambda}\\
		&\le \mbf P \pars{ (\mcl S - M)_+^{q/\beta} + \mcl D^{\frac{1}{\kappa\beta}} > \frac{\lambda - C_3 M^{q/\beta}}{C_3}}\\
		&\le \mbf P \pars{ (\mcl S - M)_+^{q/\beta} > \frac{\lambda - C_3 M^{q/\beta}}{2C_3}} + \mbf P \pars{ \mcl D^{\frac{1}{\kappa\beta}} > \frac{\lambda - C_3 M^{q/\beta}}{2C_3}}.
	\end{align*}
	Taking  $\lambda > 2C_3M^{q/\beta}$ yields
	\[
		\frac{\lambda - C_3 M^{q/\beta}}{2C_3} > \frac{\lambda}{4C_3},
	\]
	so that
	\[
		\mbf P \pars{ \sup_{(x,t),(\tilde x,\tilde t) \in B_{1/2} \times [-1/2,0]} \frac{|u(x,t) - u(\tilde x, \tilde t)|}{|x - \tilde x|^\alpha + |t - \tilde t|^{\alpha/\beta} } > \lambda} \le \mbf P \pars{ (\mcl S - M)_+^{q/\beta} > \frac{\lambda}{4C_3}} + \mbf P \pars{ \mcl D^{\frac{1}{\kappa\beta}} > \frac{\lambda}{4C_3}}.
	\]
	Finally, if $\lambda_0$ is as in Lemma \ref{L:Dbarrier}, then further taking $\lambda > 4C_3 \lambda_0^{1/(\kappa\beta)}$ yields the claim in view of the properties of $\mcl D$.
	\end{proof}

\section{Second order equations}\label{S:secondorder}
We now turn to the case of second order equations. We let $B$ be a Brownian motion as in \eqref{A:BM1}, and, for fixed
\begin{equation}\label{A:parameters2}
	\nu > 0, \quad K > 0, \quad A > 1, \quad q > 2, \quad \text{and} \quad \mcl S: \Omega \to [0,\oo),
\end{equation}
we assume that
\begin{equation}\label{A:f2}
	f \in C^2(\RR^d, \RR^m) \quad \text{and} \quad \nu + \nor{f}{\oo} + \nor{f}{\oo} \nor{Df}{\oo} + \nu \nor{f}{\oo} \nor{D^2 f}{\oo} \le K
\end{equation}
and
\begin{equation}\label{A:secondorder}
	\left\{
	\begin{split}
		&du + \left[ - \nu m_+(D^2u) + \frac{1}{A} |Du|^q - A\right] dt \le \sum_{i=1}^m f^i(x) \cdot dB^i(t), \\
		&du + \left[ - \nu m_-(D^2 u) + A |Du|^q + A \right]dt \ge \sum_{i=1}^m f^i(x) \cdot dB^i(t), \quad \text{and} \\
		&0 \le u \le \mcl S \quad \text{in } B_1 \times [-1,0].
	\end{split}
	\right.
\end{equation}

\begin{theorem}\label{T:secondorder}
	Assume \eqref{A:BM1} and \eqref{A:parameters2} - \eqref{A:secondorder}, and let $0 < \kappa < 1/2$ and $M \ge 1$. Then there exists $\alpha = \alpha(\kappa,A,q) \in (0,1)$, $c = c(\kappa,\alpha,q) > 0$, $\lambda_0 = \lambda_0(\kappa,A,K,M,q) > 0$, and, for all $p \ge 1$, $C = C(\kappa,A, K, M, p,q) > 0$ such that, for all $\lambda \ge \lambda_0$,
	\begin{align*}
		\mbf P &\pars{ \sup_{(x,s),(y,t) \in B_{1/2}\times [-1/2,0] } \frac{ |u(x,s) - u(y,t)|}{|x-y|^\alpha + |s-t|^{\alpha/(q-\alpha(q-1))}} > \lambda} \\
		&\le \mbf P \pars{ (\mcl S -M)_+ > c\lambda^{1 - \alpha/q'} } + C \frac{ \nor{f}{\oo}^p}{\lambda^{\kappa (q - \alpha(q-1))p}}.
	\end{align*}
\end{theorem}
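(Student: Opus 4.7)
The plan is to follow the same architecture as Theorem \ref{T:firstorder}, adapted to the second-order setting. The role of Lemma \ref{L:Dbarrier} is played by Lemma \ref{L:Ebarrier} (which gives inhomogeneous-to-homogeneous comparison with explicit control on the stochastic integral), while the role of the explicit Hopf--Lax infimum formula used in Cases~1 and~2 of Theorem \ref{T:firstorder} is played by the smooth barriers $U$ and $V$ constructed in Lemma \ref{L:CSbarriers}. The assumption $q > 2$ is essential: after rescaling to unit cylinders, the effective second-order coefficient carries a factor $\rho^{q-2} r^{\beta-2}/\hat{\mcl S}$ with $\beta = q - \alpha(q-1) > 1$, so the smallness hypothesis $\nu < \eta \nu_0$ required to apply Lemma \ref{L:CSbarriers}(a) can be met uniformly across scales $r \in (0,1]$ by taking $\rho$ small (where necessary also absorbing an extra power of $r$ into the choice of $\rho$).

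The parameter choices proceed in the same order as in the first-order proof. First, use Lemma \ref{L:CSbarriers} to select $\mu \in (0,1/2)$ and $\theta \in (0,1/2)$, each depending only on $A$, $q$, $d$, so that two barriers can be constructed: a translate of $U$ (with a suitable free $\eta$) dominates $w_+$ on $B_{2\mu} \times (-1,0]$ when the boundary data at $t=-1$ is small somewhere, giving $w_+ \le 1 - 2\theta$ on $B_\mu \times [-\mu^\beta,0]$; and a translate of $V$ (with $R = 2\mu$ and the constraint $\theta < \theta_0(2\mu)^{q'}$) lies below $w_-$ there, giving $w_- \ge 2\theta$ on $B_\mu \times [-\mu^\beta,0]$. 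Then set $\alpha := \min\bigl( \log(1-\theta)/\log\mu,\; \kappa q/(\kappa q + 1 - \kappa) \bigr)$ and $\beta := q - \alpha(q-1)$; as in the first-order proof, this guarantees $\beta \kappa - \alpha > 0$.

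Next define the random scale $\rho := \frac{1}{2\hat{\mcl S}} \wedge \bigl(\frac{\theta}{A \mcl E}\bigr)^{1/(\kappa q)}$ with $\hat{\mcl S} = 1 \vee \mcl S$ and $\mcl E$ from Lemma \ref{L:Ebarrier}. Fix $(x_0,t_0) \in B_{1/2} \times [-1/2,0]$ and perform the initial zoom-in $v(x,t) = u(x_0 + \rho\hat{\mcl S} x, t_0 + \rho^q \hat{\mcl S} t)/\hat{\mcl S}$ on $Q_1$. A direct computation (using $q > 2$) shows that $v$ satisfies the hypotheses of Lemma \ref{L:Ebarrier}(b) with $\eps_1 = \rho\hat{\mcl S}$, $\eps_2 = \rho^q \hat{\mcl S}$, and effective diffusion $\nu \rho^{q-2}/\hat{\mcl S}$. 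The core step is then the improvement of oscillation
\[
    0 < r \le 1 \text{ and } \osc_{Q_r} v \le r^\alpha \implies \osc_{Q_{\mu r}} v \le (\mu r)^\alpha,
\]
which I prove by rescaling $v$ on $Q_r$ to $w$ on $Q_1$ with values in $[0,1]$, applying Lemma \ref{L:Ebarrier} to sandwich $w$ between $w_\pm + \rho^{\kappa q} A \mcl E$ (an error absorbed by $\theta$ by the choice of $\rho$), and then splitting into the two cases from the first-order proof: if $\inf_{B_{2\mu}} w(\cdot,-1) \le 2\theta$, the barrier $U$ of Lemma \ref{L:CSbarriers}(a) applied to $w_+$ yields $w \le 1-\theta$ on $B_\mu \times [-\mu^\beta,0]$; if instead $w(\cdot,-1) \ge 2\theta$ on $B_{2\mu}$, the barrier $V$ of Lemma \ref{L:CSbarriers}(b) applied to $w_-$ yields $w \ge \theta$. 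In either case $\osc_{Q_\mu} w \le 1 - \theta \le \mu^\alpha$.

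The main obstacle is the induction step, and specifically the verification that the barrier hypotheses of Lemma \ref{L:CSbarriers} remain valid uniformly in the inductive scale $r$. Concretely, one must choose $\eta$ in the definition of $U$ so that $\nu \rho^{q-2} r^{\beta-2}/\hat{\mcl S} < \eta \nu_0$ while keeping $U$ small enough on the lateral boundary of $B_{2\mu}\times(-1,0]$; this requires a careful tracking of how $\eta$ enters the size of $U$ on $B_\mu \times [-\mu^\beta,0]$. Once this is dealt with (using $q>2$ to make $\rho^{q-2}$ and $r^{\beta-2}$ favorable, or, if $\beta < 2$, by replacing the definition of $\rho$ by $\rho \wedge \text{(a power of $r$)}$ absorbed into $\mcl E$), the rest of the argument mirrors Theorem \ref{T:firstorder}: Lemma \ref{L:ioo} with $a = \rho \hat{\mcl S}$, $b = \rho^q \hat{\mcl S}$, $c = \hat{\mcl S}$ gives a pointwise H\"older seminorm bound of order $\rho^{-q/\beta} \lesssim \hat{\mcl S}^{q/\beta} + \mcl E^{1/(\kappa\beta)}$, and splitting the resulting tail estimate between $(\mcl S - M)_+$ and $\mcl E$ using Lemma \ref{L:Ebarrier}(a) yields the stated probability bound, with the exponent $\kappa(q - \alpha(q-1))p = \kappa\beta p$ appearing exactly as in the first-order statement.
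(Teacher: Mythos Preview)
Your overall architecture is right and matches the paper: replace Lemma \ref{L:Dbarrier} by Lemma \ref{L:Ebarrier}, replace the Hopf--Lax formula by the smooth barriers $U$ and $V$ of Lemma \ref{L:CSbarriers}, and otherwise repeat the iteration scheme of Theorem \ref{T:firstorder}. However, there is a real gap in how you handle uniformity over the inductive scale $r$, and it is exactly the point you flag as ``the main obstacle.''

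After rescaling $v$ on $Q_r$ to $w$ on $Q_1$, the effective diffusion coefficient is $\nu \eps_2/\eps_1^2 = \nu \rho^{q-2} r^{\beta-2}/\hat{\mcl S}$. You correctly note that $\beta > 1$, but what you actually need is $\beta \ge 2$: if $\beta < 2$ then $r^{\beta-2} \to \infty$ as $r \to 0$, and no \emph{fixed} $\rho$ can make the hypothesis $\nu \eps_2/\eps_1^2 < \eta \nu_0$ of Lemma \ref{L:CSbarriers}(a) hold for all $r \in (0,1]$. Your suggested workaround (``replacing the definition of $\rho$ by $\rho \wedge \text{(a power of $r$)}$'') is circular, since $\rho$ is chosen once, at the initial zoom-in, and the improvement-of-oscillation iteration then runs over all $r$ for that fixed $v$. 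The paper's fix is simply to include the additional constraint $\alpha \le (q-2)/(q-1)$ in the definition of $\alpha$; since $q > 2$ this is a positive number, and it forces $\beta = q - \alpha(q-1) \ge 2$, whence $r^{\beta-2} \le 1$ for all $r \in (0,1]$.

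A second, related omission: your $\rho$ carries only the two constraints $\rho \le 1/(2\hat{\mcl S})$ and $\rho^{\kappa q} A \mcl E \le \theta$, but the barrier $U$ from Lemma \ref{L:CSbarriers}(a), applied with $\eta = K\rho^{q-2}/\nu_0$, contributes an extra term of order $C(K/\nu_0)^{q'/2} \rho^{q'(q-2)/2}$ to the bound on $w_+(\hat x,\hat t)$, and the barrier $V$ from Lemma \ref{L:CSbarriers}(b), applied with $R = 2\mu$, contributes a term $C\nu\theta/(4\mu^2)$ proportional to $\rho^{q-2}$. Both must be made $\le \theta$, which requires two further (deterministic) smallness constraints on $\rho$. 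These only add a constant to $\rho^{-q/\beta}$ in Step~3, so the final tail estimate is unaffected, but without them Cases~1 and~2 do not close.
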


\begin{proof}
We set up the various parameters similarly as in the proof of Theorem \ref{T:firstorder}, with a few changes to account for the second order terms.

We first choose $\mu$ such that 
\begin{equation}\label{mu2}
	0 < \mu < \frac{1}{4} \quad \text{and} \quad 
	 \frac{C}{2} 6^{q'} \mu^{q'}<1,
\end{equation}
where $C = C(q,A,d) > 4^{q'}$ is the constant from Lemma \ref{L:CSbarriers}, and we then take $\theta$ sufficiently small that
\begin{equation}\label{theta2}
	0 < \theta < \frac{1}{2}, \quad  \frac{C}{2} 6^{q'} \mu^{q'} \le 1 - 5\theta,
	\quad \text{and} \quad \theta < 4\mu^{q'} \theta_0,
\end{equation}
where $\theta_0 = \theta_0(q,A,d) > 0$ is as in Lemma \ref{L:CSbarriers}.

Set
\begin{equation}\label{alpha2}
	\alpha := \min \left\{ \frac{q-2}{q-1},  \frac{ \log(1-\theta)}{\log \mu}, \frac{\kappa q}{\kappa q + 1 - \kappa} \right\}
\end{equation}
and
\begin{equation}\label{beta2}
	\beta = q - \alpha(q-1).
\end{equation}
Observe that \eqref{alpha2} and \eqref{beta2} together imply that
\[
	1-\theta \le \mu^\alpha, \quad \beta - \alpha = q(1-\alpha), \quad \beta \kappa - \alpha \ge 0, \quad \text{and} \quad \beta \ge 2.
\]
As in the proof of Theorem \ref{T:firstorder}, we define, for $(x_0,t_0) \in \RR^d \times \RR$, 
	\[
		Q_r(x_0,t_0) := B_r(x_0) \times [t_0 - r^\beta, t_0] \quad \text{and} \quad Q_r := Q_r(0,0).
	\]

We now set
\[
	\hat{\mcl S} := \mcl S \vee 1,
\]
and, for $\mcl E$ the random variable from Lemma \ref{L:Ebarrier}, and $C$ and $\nu_0$ the values from Lemma \ref{L:CSbarriers}, the random variable $\rho$  is the largest value such that
\begin{equation}\label{rho2}
	\left\{
	\begin{split}
		(a) \quad &0 < \rho \le \frac{1}{2 \hat{\mcl S}},\\
		(b) \quad & \rho^{\kappa q} A \mcl E \le \theta, \\
		(c) \quad & 2^{q'-1} C K^{q'/2} \nu_0^{-q'/2} \rho^{q'(q-2)/2} \le \theta, \text{ and}\\
		(d) \quad & C \rho^{q-2} \le 4 \mu^2.
	\end{split}
	\right.
\end{equation}

{\it Step 1: The initial zoom-in.} Fix $(x_0,t_0) \in B_{1/2} \times [-1/2,0]$ and set
	\[
		v(x,t) := \frac{u(x_0 + \rho \hat{\mcl S} x, t_0 + \rho^q \hat{\mcl S} t)}{\hat{\mcl S}},
	\]
	which is well-defined for $(x,t) \in B_1 \times [-1,0]$ in view of \eqref{rho2}(a). Then $v$ satisfies
	\begin{equation}\label{E:vsystem2}
		\left\{
		\begin{split}
		&\del_tv - \frac{\nu \rho^{q-2}}{\hat{\mcl S}} m_-(D^2 v) + A |Dv|^q + \rho^q A \ge \rho^q f(x_0 + \rho \hat{\mcl S} x) \cdot \dot B(t_0 + \rho^q\hat{\mcl S}t),\\
		&\del_tv - \frac{\nu \rho^{q-2}}{\hat{\mcl S}} m_+(D^2 v) + \frac{1}{A}|Dv|^q - \rho^q A \le \rho^q f(x_0 + \rho\hat{\mcl S} x) \cdot \dot B(t_0 + \rho^q \hat{\mcl S} t), \quad \text{and}\\
		&0 \le v \le 1 \text{ in } B_1 \times [-1,0].
		\end{split}
		\right.
	\end{equation}

{\it Step 2: Induction step.} We next show that
\begin{equation}\label{oscimprove2}
	\text{if} \quad 0 < r \le 1 \quad \text{and} \quad \osc_{Q_r} v \le r^\alpha, \quad \text{then} \quad \osc_{Q_{\mu r}} v \le (\mu r)^\alpha.
\end{equation}

Let $r \in (0,1]$ be such that $\osc_{Q_r} v \le r^\alpha$. We then set
	\[
		w(x,t) := \frac{ v(rx,r^\beta t) - \inf_{Q_r} v }{r^\alpha} \quad \text{for } (x,t) \in B_1 \times [-1,0],
	\]
	which satisfies
	\[
	\left\{
	\begin{split}
		&\del_tw - \frac{\eps_2}{\eps_1^2} \nu m_+(D^2 w) + \frac{1}{A} |Dw|^q - \pars{ \frac{\eps_2}{\eps_1}}^{q'} A \le \pars{ \frac{\eps_2}{\eps_1}}^{q'}  f(x_0 + \eps_1 x) \cdot \dot B(t_0 + \eps_2 t),\\
		&\del_tw - \frac{\eps_2}{\eps_1^2} \nu m_-(D^2 w) + A|Dw|^q + \pars{ \frac{\eps_2}{\eps_1}}^{q'}  A \ge \pars{ \frac{\eps_2}{\eps_1}}^{q'}  f(x_0 + \eps_1 x) \cdot \dot B(t_0 + \eps_2 t), \text{ and} \\
		& 0 \le w \le 1 \text{ in } B_1 \times [-1,0],
	\end{split}
	\right.
	\]
	where $\eps_1 := \hat{\mcl S}\rho r$ and $\eps_2 := \hat{\mcl S}\rho^q r^\beta$. It is a consequence of \eqref{rho2}(a) that  $\eps_1,\eps_2 \in (0,1/2]$, and, moreover, just as in the proof of Theorem \ref{T:firstorder}, using the fact that $\beta \kappa \ge \alpha$,
	\[
		\frac{\eps_2^{q' -1 + \kappa}}{\eps_1^{q'}} \le \rho^{\kappa q}.
	\]
	
	To prove \eqref{oscimprove2}, we show that either
	\begin{equation}\label{upperosc2}
		w(x,t) \le 1 - \theta \quad \text{for all } (x,t) \in B_\mu \times [-\mu^\beta,0]
	\end{equation}
	or
	\begin{equation}\label{lowerosc2}
		w(x,t) \ge \theta \quad \text{for all } (x,t) \in B_\mu \times [-\mu^\beta,0].
	\end{equation}
	We consider the two following cases:
	
	{\it Case 1.} Assume first that
	\[
		\inf_{y \in B_{2\mu}} w(y,-1) \le 2 \theta.
	\]
	Let $(\hat x,\hat t) \in B_\mu \times [-\mu^\beta,0]$. Then \eqref{rho2}(b) and the upper bound from Lemma \ref{L:Ebarrier} imply that
	\[
		w(\hat x,\hat t) \le w_+(\hat x,\hat t) + \frac{\eps_2^{q' - 1 + \kappa}}{\eps_1^{q'}}A\mcl E
		\le w_+(\hat x, \hat t) + \rho^{\kappa q} A \mcl E \le w_+(\hat x, \hat t) + \theta,
	\]
	where
	\begin{equation}\label{E:w+}
		\begin{dcases}
		\del_t w_{+} - \frac{\eps_2}{\eps_1^2} \nu m_+(D^2 w_+) + \frac{1}{2A}|Dw_+|^q = 0 & \text{in } B_{2\mu} \times (-1,0] \text{ and}\\
		w_+ = w & \text{on } \del^*(B_{2\mu} \times [-1,0]).
		\end{dcases}
	\end{equation}
	  Note that, by the maximum principle, we have $0\leq w_+\leq 1$.
	Let  $C\geq 4^{q'}$ and $\nu_0$ be as in Lemma \ref{L:CSbarriers}, and, for $y \in B_{2\mu}$ and $(x,t) \in B_{2\mu} \times [-1,0]$, set
	\[
		w_y(x,t) := w(y,-1) + \frac{C}{(t+1)^{q'-1}} \pars{ |x-y|^2 + \frac{K \rho^{q-2}}{\nu_0} (t+1)}^{q'/2}.
	\]
	We compute
	\[
		\frac{\eps_2}{\eps_1^2} \nu = \frac{\rho^{q-2} r^{\beta - 2} \nu}{\hat{\mcl S}} \le \nu \rho^{q-2} \le \pars{ \frac{K \rho^{q-2}}{\nu_0} } \nu_0,
	\]
	and therefore, by Lemma \ref{L:CSbarriers}(a), $w_y$ is a super-solution of \eqref{E:w+}.  In addition,
	\[
		w_y(x,-1) =
		\begin{dcases}
			+\oo & \text{if } x \ne y, \\
			w(y,-1) & \text{if } x = y,
		\end{dcases}
	\]
	and, for any $(x,t)\in \partial B_1\times [-1,0]$, 
	\[
		w_y(x,t) \geq C((1-2\mu)^2)^{\frac{q'}{2}} \geq C4^{-q'} \geq 1 \geq w_+(x,t), 
	\]
	in view of the choice of $C\geq 4^{q'}$ and of $\mu<1/4$. So $w_y\geq w_+$ in $B_1\times [-1,0]$ by the comparison principle.  Because $\hat t \in [-\mu^\beta ,0]$, it follows that $1 + \hat t > 1 - \mu^\beta > \frac{1}{2}$, and so,  by \eqref{theta2} and \eqref{rho2}(c),
	\begin{align*}
		w_+(\hat x, \hat t) &\le \inf_{y \in B_{2\mu}} \left\{ w(y,-1) + \frac{C}{(\hat t+1)^{q'-1}} \pars{ |\hat x-y|^2 + \frac{K \rho^{q-2}}{\nu_0} (\hat t+1)}^{q'/2}\right\}\\
		&\le \frac{1}{2} 6^{q'} C \mu^{q'} + 2^{q'-1} CK^{q'/2}\nu_0^{-q'/2} \rho^{q'(q-2)/2} + \inf_{y \in B_{2\mu}} w(y,-1)\\
		&\le 1 - 5\theta + \theta + 2\theta = 1 - 2\theta.
	\end{align*}
	We conclude that $w(\hat x, \hat t) \le 1 - 2\theta + \theta = 1-\theta$, so that \eqref{upperosc2} holds in this case.
	
	{\it Case 2.} We now assume that
	\[
		\inf_{y \in B_{2\mu}} w(y,-1) > 2\theta.
	\]
	Fix $(\hat x, \hat t) \in B_{\mu} \times [-\mu^\beta,0]$. As in Step 1, Lemma \ref{L:Ebarrier} gives
	\[
		w(\hat x, \hat t) \ge w_-(\hat x, \hat t) - \theta,
	\]
	where
	\begin{equation}\label{E:w-}
		\begin{dcases}
		\del_t w_{-} - \frac{\eps_2}{\eps_1^2} \nu m_-(D^2 w_-) + 2A|Dw_-|^q = 0 & \text{in } B_{2\mu} \times (-1,0] \text{ and}\\
		w_- = w & \text{on } \del^*(B_{2\mu} \times [-1,0]).
		\end{dcases}
	\end{equation}
	For $(x,t) \in B_{2\mu} \times [-1,0]$ and for $b$ and $C$ as in Lemma \ref{L:CSbarriers}(b), define
	\[
		V(x,t) = 3\theta b \pars{ \frac{|x|}{2\mu} + \frac{t+1}{4}} - \frac{C\rho^{q-2} \theta}{4\mu^2 }(t+1).
	\]
	Then, by \eqref{theta2} and Lemma \ref{L:CSbarriers}(b), $V$ is a sub-solution of \eqref{E:w-}. In addition,
	\[
		V \le 0 \quad \text{on } \del B_{2\mu} \times [-1,0] \quad \text{and} \quad V \le 2\theta \quad \text{on } B_{2\mu} \times \{-1\},
	\]
	and so $V \le w_-$ on $\del^*(B_{2\mu} \times [-1,0])$. The comparison principle now implies that $V \le w_-$ in all of $B_{2\mu} \times [-1,0]$, and, in particular, using \eqref{rho2}(d) and the fact that $b(3/4) = 1$ and $b$ is nonincreasing,
	\begin{align*}
		w_-(\hat x, \hat t) \ge V(\hat x, \hat t) = 3\theta b \pars{ \frac{|\hat x|}{2\mu} + \frac{\hat t+1}{4}} - \frac{C\rho^{q-2} \theta}{4\mu^2 }(\hat t+1)
		\ge 3\theta b \pars{ \frac{3}{4}} - \theta = 2\theta.
	\end{align*}
	Thus, in this case, \eqref{lowerosc2} holds.
	
	Whether \eqref{upperosc2} or \eqref{lowerosc2} is satisfied, we have
	\[
		\osc_{Q_{\mu r}} v = r^\alpha \osc_{Q_\mu} w \le (1-\theta)r^\alpha \le (\mu r)^\alpha,
	\]
	and so \eqref{oscimprove2} is established.
	
	{\it Step 3: the H\"older estimate.} As in the proof of Theorem \ref{T:firstorder}, we use Lemma \ref{L:ioo} and \eqref{rho2}(a) to conclude that, for some $C_1 = C_1(\kappa,A,q) > 0$,
	\begin{align*}
		\sup_{(x,t),(\tilde x,\tilde t) \in B_{1/2} \times [-1/2,0]} \frac{|u(x,t) - u(\tilde x, \tilde t)|}{|x - \tilde x|^\alpha + |t - \tilde t|^{\alpha/\beta} } \le C_1 \rho^{-q/\beta}.
	\end{align*}
	All of the parts of \eqref{rho2} imply that, for some $C_2 = C_2(\kappa, A,q) > 0$ and $C_3 = C_3(\kappa,A,K,q) > 0$,
	\[
		\rho^{-q/\beta} \le \tilde C_2(\hat{\mcl S}^{q/\beta} + \mcl E^{1/(\kappa\beta)}) + C_3,
	\]
	and the rest of the proof follows as in the proof of Theorem \ref{T:firstorder} and the properties of $\mcl E$ outlined in Lemma \ref{L:Ebarrier}.
\end{proof}

\section{Applications}\label{S:apps}
In this section, we show how Theorems \ref{T:firstorder} and \ref{T:secondorder} can be used to prove a regularizing effect for certain initial value problems. Moreover, the regularity estimates are independent of a certain large-range, long-time scaling, which is useful in the theory of homogenization.

We fix a finite time horizon $T > 0$ and an initial condition
\begin{equation}\label{A:u0}
	u_0 \in BUC(\RR^d).
\end{equation}
The uniform continuity of $u_0$ ensures the well-posedness of the equations below, but we note that the regularizing effects we prove depend only on $\nor{u_0}{\oo}$.

Throughout,
\begin{equation}\label{A:ivpBM}
	B: [0,T] \times \Omega \to \RR^m \quad \text{is a Brownian motion over a probability space } (\Omega,\mbf F, \mbf P).
\end{equation}

We first consider equations of first order, and we assume that, for some $A > 1$ and $q > 1$,
\begin{equation}\label{A:firstorderH}
	\left\{
	\begin{split}
	&H \in C(\RR^d \times \RR^d \times [0,\oo)) \text{ satisfies}\\
	&\frac{1}{A} |p|^q - A  \le H(p,x,t) \le A|p|^q + A \quad \text{for all } (p,x,t) \in \RR^d \times \RR^d \times [0,T],
	\end{split}
	\right.
\end{equation}
and
\begin{equation}\label{A:firstorderf}
	f \in C^1_b(\RR^d,\RR^m).
\end{equation}
For $0 < \eps < 1$, we consider solutions of the scaled, forced equation
\begin{equation}\label{E:scaledfirstorder}
	du^\eps + H\pars{ Du^\eps, \frac{x}{\eps}, \frac{t}{\eps} }dt = \eps^{1/2} \sum_{i=1}^m f^i\pars{ \frac{x}{\eps}} \cdot dB^i(t) \quad \text{in } \RR^d \times (0,T] \quad \text{and} \quad u^\eps(\cdot,0) = u_0 \quad \text{on } \RR^d,
\end{equation}
and we prove the following result:

\begin{theorem}\label{T:firstorderscaling}
	Assume \eqref{A:u0} - \eqref{A:firstorderf}, and, for  $0 < \eps \leq 1$, let $u^\eps$ be the solution of \eqref{E:scaledfirstorder}. Fix  $p\geq 1$, $\tau > 0$ and $R > 0$. Then there exist $C = C(R,\tau,T,A, \nor{f}{C^1},\nor{u}{\oo}, p, q) > 0$, $\alpha = \alpha(A,q) > 0$, and $\sigma = \sigma(A,q) > 0$ such that, for all $\lambda > 0$,
	\[
		\mbf P \pars{ \sup_{(x,t), (\tilde x,\tilde t) \in B_R \times [\tau,T]} \frac{|u^\eps(x,t) - u^\eps(\tilde x, \tilde t)|}{|x - \tilde x|^\alpha + |t - \tilde t|^{\alpha/(q - \alpha(q-1))} } > C + \lambda} \le \frac{C\eps^{p/2}}{\lambda^{\sigma p}}.
	\]
\end{theorem}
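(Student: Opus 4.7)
The plan is to apply the interior estimate of Theorem \ref{T:firstorder} locally after a suitable parabolic zoom-in around each point of $B_R \times [\tau, T]$, then patch the local bounds by a finite covering. The key point is that the hypotheses of Theorem \ref{T:firstorder}---in particular the $C^1$-type bound on the forcing---can be controlled uniformly in $\eps$ thanks to the hyperbolic scaling built into \eqref{E:scaledfirstorder}.

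First I would establish a pathwise bound of the form $\nor{u^\eps}{\oo} \le M_0 + \eps^{1/2}\nor{f}{\oo} \sup_{[0,T]}|B|$, with $M_0 = M_0(\nor{u_0}{\oo}, A, T)$, by applying Lemma \ref{L:Dbarrier} on a sufficiently large ball with $\pm(\nor{u_0}{\oo} + At)$ as constant barriers for the homogeneous comparison problem; the naive deterministic reduction via Definition \ref{D:solutions} fails because the transformed Hamiltonian's $L^\infty$ norm blows up in $\eps^{-q/2}$. Set $\beta := q - \alpha(q-1)$ with $\alpha$ from Theorem \ref{T:firstorder}, chosen small enough that $\alpha \le q/(q+1)$, and fix $r := \tau^{1/\beta} \wedge 1$, so that $Q_r(x_0, t_0) := B_r(x_0) \times [t_0 - r^\beta, t_0] \subset \RR^d \times [0, T]$ for every $(x_0, t_0) \in B_R \times [\tau, T]$. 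For such $(x_0, t_0)$, define on $Q_1$
\[
\tilde u(y, s) := r^{-\alpha}\bigl[u^\eps(x_0 + ry, t_0 + r^\beta s) - \inf_{Q_r(x_0,t_0)} u^\eps\bigr],
\]
together with $\tilde B(s) := r^{-\beta/2}[B(t_0 + r^\beta s) - B(t_0)]$ and $\tilde f(y) := r^{\beta/2-\alpha}\eps^{1/2} f((x_0+ry)/\eps)$. A direct computation using $\beta - q = -\alpha(q-1)$ shows that $\tilde u$ satisfies, with respect to the standard Brownian motion $\tilde B$, the sub-/super-solution pair of Theorem \ref{T:firstorder} with the same $A$ and $q$ (the additive constant $A r^{q(1-\alpha)}$ is $\le A$ since $r \le 1$), and scale invariance of the $K$-bound follows from
\[
\nor{\tilde f}{\oo} + \nor{\tilde f}{\oo} \nor{D\tilde f}{\oo} \le \eps^{1/2}\nor{f}{\oo} + r^{(q+1)(1-\alpha)} \nor{f}{\oo} \nor{Df}{\oo} \le 2\nor{f}{C^1}^2,
\]
using $r, \eps \le 1$, $\alpha \le q/(q+1)$, and $(q+1)(1-\alpha) \ge 0$. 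The solution bound is $0 \le \tilde u \le \mcl S := 2r^{-\alpha}(M_0 + \eps^{1/2}\nor{f}{\oo} \sup|B|)$.

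Applying Theorem \ref{T:firstorder} to $\tilde u$ with a fixed $\kappa \in (0, 1/2)$ and $M := 2r^{-\alpha}M_0 + 1$, and noting that the weighted seminorm $|u(x,t) - u(\tilde x,\tilde t)|/(|x-\tilde x|^\alpha + |t-\tilde t|^{\alpha/\beta})$ is invariant under the transformation $u \mapsto r^{-\alpha}u$, $(x,t) \mapsto (x_0 + ry, t_0 + r^\beta s)$, I obtain, for $\lambda$ above a deterministic threshold,
\[
\mbf P\pars{\sup_{Q_{r/2}(x_0,t_0)} \frac{|u^\eps(x,t) - u^\eps(\tilde x,\tilde t)|}{|x-\tilde x|^\alpha + |t-\tilde t|^{\alpha/\beta}} > \lambda} \le \mbf P\pars{r^{-\alpha}\eps^{1/2}\nor{f}{\oo} \sup|B| > c\lambda^\sigma} + \frac{C\eps^{p/2}\nor{f}{\oo}^p}{\lambda^{\sigma p}}.
\]
Since $r$ is a fixed function of $\tau$ and $\sup|B|$ has polynomial moments $\mbf P(\sup|B| > x) \le C_p x^{-p}$, the first term is also bounded by $C\eps^{p/2}/\lambda^{\sigma p}$. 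A finite covering of $B_R \times [\tau, T]$ by $O(1)$ such cylinders (with cardinality depending only on $R, \tau, T$) together with the trivial estimate $|u^\eps(x,t) - u^\eps(\tilde x,\tilde t)|/(|x-\tilde x|^\alpha + |t-\tilde t|^{\alpha/\beta}) \le 2\nor{u^\eps}{\oo}/(r/4)^\alpha$ for pairs lying in distinct cylinders (whose probability tail is again polynomial in $\eps^{1/2}/\lambda$) yields the claimed global estimate, with the deterministic $C$ absorbing the large-$\lambda$ threshold and the covering multiplicity.

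The principal obstacle is the scale invariance of the forcing: I need to verify that the combined factor $\eps^{1/2} \cdot (r/\eps)$ arising from rescaling and differentiating $f(\cdot/\eps)$ does not degrade the constant $K$ as $\eps \to 0$. This works only because the prefactor $\eps^{1/2}$ in \eqref{E:scaledfirstorder} exactly matches the temporal scaling of the Brownian motion, and the algebraic identity $\beta + 1 - 2\alpha = (q+1)(1-\alpha) \ge 0$ ensures the balance. A secondary obstacle, the $L^\infty$ bound, is circumvented by invoking Lemma \ref{L:Dbarrier} directly on a large ball rather than transforming to a deterministic HJ equation.
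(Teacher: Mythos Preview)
Your argument is correct in spirit and arrives at the same conclusion, but it takes a somewhat more elaborate route than the paper. The paper avoids your local parabolic zoom-in entirely: it first rescales the whole problem by $2\tau$ to reduce to the case $\tau > 1/2$ (so unit cylinders fit inside $\RR^d \times [0,T]$), observes once and for all that the $\eps$-rescaled forcing $f^\eps(x) = \eps^{1/2}f(x/\eps)$ satisfies $\nor{f^\eps}{\oo}\nor{Df^\eps}{\oo} = \nor{f}{\oo}\nor{Df}{\oo}$, and then applies Theorem~\ref{T:firstorder} \emph{directly} on a finite cover by unit cylinders with $m = \nor{f^\eps}{\oo} = \eps^{1/2}\nor{f}{\oo}$. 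Your approach instead keeps $\tau$ arbitrary and absorbs it into an additional local rescaling by $r = \tau^{1/\beta}\wedge 1$, which forces you to verify the $K$-bound at a second scale via the identity $\beta + 1 - 2\alpha = (q+1)(1-\alpha) \ge 0$ and to impose $\alpha \le q/(q+1)$ (legitimately, by taking $\kappa$ small in Theorem~\ref{T:firstorder}). Both arguments exploit the same scale invariance of the product $\nor{f}{\oo}\nor{Df}{\oo}$; the paper's single global rescaling is just cleaner bookkeeping.

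One imprecision worth flagging: the $L^\oo$ bound you state, $\nor{u^\eps}{\oo} \le M_0 + \eps^{1/2}\nor{f}{\oo}\sup_{[0,T]}|B|$, is not what Lemma~\ref{L:Dbarrier} literally delivers. That lemma gives $|u^\eps - w_\pm| \le A\mcl D$ on a unit time interval, with $\mcl D$ an abstract random variable satisfying $\mbf P(\mcl D > \lambda) \le C m^p/\lambda^p$, and the paper iterates over $N = \lceil T\rceil$ unit intervals to obtain $\mcl S = 2\nor{u_0}{\oo} + 2A\sum_{k=1}^N \mcl D_k$. The tail behaviour of your $\sup|B|$ bound is equivalent, so the downstream estimate is unaffected, but the identification $\mcl D \sim m\sup|B|$ is not established anywhere.
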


\begin{proof}
	We first note that we can assume, without loss of generality, that $\tau > 1/2$. Indeed, otherwise, we consider the function
	\[
		\tilde u^\eps(x,t) := \frac{1}{2\tau} u^\eps(2\tau x , 2\tau t) \quad \text{for } (x,t) \in \RR^d \times \left[0, \frac{T}{2\tau} \right],
	\]
	which solves
	\[
		d \tilde u^\eps + \tilde H\pars{ D \tilde u^\eps, \frac{x}{\eps}, \frac{t}{\eps}}dt = \eps^{1/2} \sum_{i=1}^m \tilde f^i\pars{ \frac{x}{\eps}} \cdot d\tilde B^i(t) \quad \text{in } \RR^d \times \left(0, \frac{T}{2\tau} \right) \quad \text{and} \quad \tilde u^\eps(\cdot,0) = \tilde u_0 \quad \text{on } \RR^d,
	\]
	where, for $(p,x,t) \in \RR^d \times \RR^d \times \left[0, \frac{T}{2\tau} \right]$,
	\[
		\tilde H(p,x,t) := H(p,2\tau x, 2 \tau t), \quad \tilde u_0(x) = \frac{1}{2\tau} u_0(2\tau x), \quad \tilde f(x) = \frac{1}{\sqrt{2\tau}} f(2\tau x), \quad \text{and} \quad \tilde B(t) = \frac{1}{\sqrt{2\tau}} B(2\tau t).
	\]
	Then $\tilde H$ satisfies \eqref{A:firstorderH} with $A$ and $q$ unchanged, and $\tilde B$ is a Brownian motion on $[0,2\tau T]$. As a consequence, $\alpha = \alpha(A,q) > 0$ remains unchanged, and the $\tau$-dependence can be absorbed into $R$, $T$, $\nor{f}{C^1}$, and $\nor{u_0}{\oo}$.
	
	Crucially, if $f^\eps(x) := \eps^{1/2} f(x/\eps)$, then
	\[
		\nor{f^\eps}{\oo} = \eps^{1/2} \nor{f}{\oo} \quad \text{and} \quad \nor{f^\eps}{\oo} \nor{Df^\eps}{\oo} = \nor{f}{\oo} \nor{Df}{\oo}.
	\]
	As a consequence, we may choose a fixed constant $K > 0$ such that the conclusions of Lemma \ref{L:Dbarrier} and Theorem \ref{T:firstorder} hold with the function $f^\eps$, for all $\eps \in( 0,1]$.
	
	In what follows, we fix $0 < \kappa < \frac{1}{2}$.
	
	{\it Step 1: $u$ is bounded.} We first use Lemma \ref{L:Dbarrier} to describe the $L^\oo$-bound for $u$ on $\RR^d \times [0,T]$. In view of \eqref{A:firstorderH}, Lemma \ref{L:Dbarrier} with $\eps_1 = \eps_2 = 1$, $R = +\oo$, and $\mcl C = \RR^d$ gives
	\[
		u^\eps(x,t) \le u_+(x,t) + A \mcl D_1 \quad \text{on } \RR^d \times [0,1],
	\]
	where, for some $\lambda_1 = \lambda_1(\kappa, \nor{f}{C^1}, q) > 0$ and, given $p \ge 1$, some $C = C(\kappa, \nor{f}{C^1}, p,q) > 0$,
	\[
		\mbf P \pars{ D_1 > \lambda} \le \frac{C \eps^{p/2}}{\lambda^p} \quad \text{for all } \lambda \ge \lambda_1
	\]
	and
	\[
		\del_tu_{+} + \frac{1}{2A} |Du_+|^{q} = 0 \quad \text{on } \RR^d \times[0,1], \quad \text{and } u_+(\cdot,0) = u_0 \quad \text{on } \RR^d.
	\]
	The comparison principle yields $u_+(x,t) \le \nor{u_0}{\oo}$. It follows that
	\[
		u^\eps(x,t) \le \nor{u_0}{\oo} + C(1 + \mcl D_1) \quad \text{on } \RR^d \times [0,1]
	\]
	Set $N := \lceil T \rceil$. An inductive argument then gives random variables $\mcl D_2, \mcl D_3, \ldots, \mcl D_N: \Omega \to \RR_+$ and $\lambda_2,\lambda_2, \ldots, \lambda_N$ depending on $\kappa$, $\nor{f}{C^1}$, and $q$ such that
	\[
		u^\eps(x,t) \le \nor{u_0}{\oo} + A\sum_{k=1}^N \mcl D_n \quad \text{on } \RR^d \times [0,T]
	\]
	and, for all $k = 1,2,\ldots, N$, $p \ge 1$, and some $C = C(\kappa, \nor{f}{C^1}, p,q) > 0$,
	\[
		\mbf P\pars{ D_k > \lambda} \le \frac{C\eps^{p/2}}{\lambda^p} \quad \text{for all }\lambda \ge \lambda_k.
	\]
	A similar argument, using the lower bound of Lemma \ref{L:Dbarrier}, gives
	\[
		u^\eps(x,t) \ge -\nor{u_0}{\oo} - A\sum_{k=1}^N \mcl D_n \quad \text{on } \RR^d \times [0,T].
	\]
	Adding a random constant to $u^\eps$, which does not affect the equation solved by $u^\eps$, we may then write
	\[
		0 \le u^\eps \le \mcl S \quad \text{on } \RR^d \times [0,T],
	\]
	where
	\[
		\mcl S := 2 \nor{u^\eps}{\oo} + 2A \sum_{k=1}^N \mcl D_k.
	\]
	Setting $M := 1 \vee (2\nor{u_0}{\oo})$, we then have, for all $p \ge 1$, $\lambda \ge \lambda_1 \vee \lambda_2 \vee \cdots \vee \lambda_N$, and some constant $C = C(\kappa,\nor{f}{C^1}, A, p,q,T) > 0$,
	\begin{equation}\label{Stails}
		\mbf P \pars{ (\mcl S-M)_+ > \lambda} \le \mbf P \pars{ \sum_{k=1}^N \mcl D_k > \frac{\lambda}{2A}}
		\le \frac{C \eps^{p/2}}{\lambda^p}.
	\end{equation}
	
	{\it Step 2: the H\"older estimate.} Because $\tau > 1/2$, we can cover $B_R \times [\tau,T]$ with cylinders on which, by Theorem \ref{T:firstorder}, $u$ is H\"older continuous. More precisely, there exists $\alpha$, $\lambda_0$ and $C$ as in the statement of the current theorem, and $c = c(\kappa, \alpha,q) > 0$, such that, for all $p \ge 1$ and $\lambda \ge \lambda_0$,
	\[
		\mbf P \pars{ \sup_{(x,t), (\tilde x, \tilde t) \in B_{R} \times [\tau,T]} \frac{ |u^\eps(x,t) - u^\eps(\tilde x, \tilde t)|}{|x - \tilde x|^\alpha + |t - \tilde t|^{\alpha/(q - \alpha(q-1))}} > \lambda} \le \mbf P((\mcl S-M)_+ > c\lambda^{1-\alpha/q'}) + \frac{C\eps^{p/2}}{\lambda^{\kappa (q - \alpha(q-1))p}}.
	\]
	Making $\lambda_0$ larger if necessary, depending on $\kappa$, $\nor{f}{C^1}$, and $q$, we invoke \eqref{Stails} and obtain the result with
	\[
		\sigma = \pars{ 1 - \frac{\alpha}{q'}} \wedge \pars{ \kappa (q - \alpha(q-1))} = (q - \alpha(q-1)) \pars{ \frac{1}{q} \wedge \kappa}.
	\]
\end{proof}

The next result is for the second-order case. Assume that, for some $A > 1$, $\nu > 0$, and $q > 2$,
\begin{equation}\label{A:secondorderF}
	\left\{
	\begin{split}
	&F \in C(\mbb S^d \times \RR^d \times \RR^d \times [0,\oo)) \text{ satisfies}\\
	&-\nu m_+(X) + \frac{1}{A} |p|^q - A  \le F(X,p,x,t) \le -\nu m_-(X) + A|p|^q + A\\
	&\quad \text{for all } (X,p,x,t) \in \mbb S^d \times \RR^d \times \RR^d \times [0,T],
	\end{split}
	\right.
\end{equation}
and
\begin{equation}\label{A:secondorderf}
	f \in C^2_b(\RR^d,\RR^m).
\end{equation}
For $0 < \eps < 1$, the scaled equation we consider is
\begin{equation}\label{E:scaledsecondorder}
	du^\eps + F\pars{\eps D^2u^\eps, Du^\eps, \frac{x}{\eps}, \frac{t}{\eps} }dt = \eps^{1/2} \sum_{i=1}^m f^i\pars{ \frac{x}{\eps}} \cdot dB^i(t) \quad \text{in } \RR^d \times (0,T] \quad \text{and} \quad u^\eps(\cdot,0) = u_0 \quad \text{on } \RR^d,
\end{equation}
and we prove the following result:

\begin{theorem}\label{T:secondorderscaling}
	Assume \eqref{A:u0}, \eqref{A:ivpBM}, \eqref{A:secondorderF}, and \eqref{A:secondorderf}, and, for  $0 < \eps \leq 1$, let $u^\eps$ be the solution of \eqref{E:scaledsecondorder}. Fix  $p\geq 1$, $\tau > 0$ and $R > 0$. Then there exists a constant $C = C(R,\tau,T,A, \nor{f}{C^2},\nor{u_0}{\oo}, p, q) > 0$, $\alpha = \alpha(A,q) > 0$, and $\sigma = \sigma(A,q) > 0$ such that
	\[
		\mbf P \pars{ \sup_{(x,t), (\tilde x,\tilde t) \in B_R \times [\tau,T]} \frac{|u^\eps(x,t) - u^\eps(\tilde x, \tilde t)|}{|x - \tilde x|^\alpha + |t - \tilde t|^{\alpha/(q - \alpha(q-1))} } > C + \lambda} \le \frac{C\eps^{p/2}}{\lambda^{ \sigma p}}.
	\]
\end{theorem}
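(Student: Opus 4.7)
The plan is to mirror the proof of Theorem~\ref{T:firstorderscaling}, with Lemma~\ref{L:Ebarrier} and Theorem~\ref{T:secondorder} replacing Lemma~\ref{L:Dbarrier} and Theorem~\ref{T:firstorder}, respectively. The crucial scale-invariance observation is that, upon rewriting \eqref{E:scaledsecondorder} as
\[
	du^\eps + F^\eps(D^2 u^\eps, Du^\eps, x, t) \, dt = f^\eps(x) \cdot dB, \quad F^\eps(X,p,x,t) := F(\eps X, p, x/\eps, t/\eps), \quad f^\eps(x) := \eps^{1/2} f(x/\eps),
\]
the function $F^\eps$ satisfies \eqref{A:Fsuperquadratic} with effective viscosity $\nu^\eps := \eps \nu$, and the pair $(\nu^\eps, f^\eps)$ satisfies \eqref{A:fC2bound} with a constant $K$ independent of $\eps$, because the blow-ups of $\nor{Df^\eps}{\oo}$ and $\nor{D^2 f^\eps}{\oo}$ are exactly cancelled by $\nor{f^\eps}{\oo} \sim \eps^{1/2}$ and $\nu^\eps \sim \eps$. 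This is what allows the estimates from Theorem~\ref{T:secondorder} and Lemma~\ref{L:Ebarrier} to be used with $\eps$-uniform constants.

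First I reduce to the case $\tau > 1/2$ via the same parabolic rescaling $\tilde u^\eps(x,t) := (2\tau)^{-1} u^\eps(2\tau x, 2\tau t)$ as in the proof of Theorem~\ref{T:firstorderscaling}. A chain-rule calculation shows $\tilde u^\eps$ satisfies an equation of the same form as \eqref{E:scaledsecondorder} with $F$ replaced by $\tilde F(X,p,x,t) := F(X/(2\tau), p, 2\tau x, 2\tau t)$ (hence with $\nu$ effectively $\nu/(2\tau)$), with $f$ suitably rescaled, and with $T$ replaced by $T/(2\tau)$. All new constants depend only on $\tau$, $\nor{f}{C^2}$, $\nu$, and $\nor{u_0}{\oo}$, and can be absorbed into $C$.

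Second, I produce a global random $L^\infty$-bound for $u^\eps$ on $\RR^d \times [0,T]$ with tails of order $\eps^{p/2}$. Applying Lemma~\ref{L:Ebarrier} with $\eps_1 = \eps_2 = 1$, $R = \oo$, $\mcl C = \RR^d$, and the pair $(F^\eps, f^\eps, \nu^\eps)$ from the opening paragraph gives, on each unit time slab $\RR^d \times [k, k+1]$ with $k = 0, 1, \ldots, \lceil T \rceil - 1$,
\[
	u_-^\eps - A \mcl E_k \le u^\eps \le u_+^\eps + A \mcl E_k,
\]
where $u_\pm^\eps$ solve homogeneous Hamilton--Jacobi--Bellman equations on $\RR^d$ with endpoint data inherited from the preceding slab, so the standard comparison principle keeps their $L^\infty$-norms controlled by that data. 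The tail $\mbf P(\mcl E_k > \lambda) \le C \eps^{p/2}/\lambda^p$ from Lemma~\ref{L:Ebarrier}(a), applied with $m = \nor{f^\eps}{\oo} = \eps^{1/2}\nor{f}{\oo}$, compounds over the $\lceil T \rceil$ slabs into a global bound $|u^\eps| \le \nor{u_0}{\oo} + A \sum_k \mcl E_k$ on $\RR^d \times [0,T]$ with tails of the same order. Translating $u^\eps$ by a random constant (which affects neither the equation nor pairwise differences) yields $0 \le u^\eps \le \mcl S$ with $\mcl S := 2 \nor{u_0}{\oo} + 2A \sum_k \mcl E_k$; setting $M := 1 \vee (2 \nor{u_0}{\oo})$ gives $\mbf P((\mcl S - M)_+ > \lambda) \le C \eps^{p/2}/\lambda^p$ for all sufficiently large $\lambda$.

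Third, cover the compact set $B_R \times [\tau, T]$ by finitely many parabolic cylinders of radius $1$ (possible because $\tau > 1/2$) and apply Theorem~\ref{T:secondorder} to $u^\eps$ on each, using $\mcl S$ as the random upper bound. Combining the resulting H\"older estimates with the tail bound from Step~2 gives the claimed inequality with exponent $\sigma := (q - \alpha(q-1)) \pars{ \frac{1}{q} \wedge \kappa }$, arising as the minimum of $\kappa(q - \alpha(q-1))$ from the forcing contribution $\nor{f^\eps}{\oo}^p = \eps^{p/2} \nor{f}{\oo}^p$ and $1 - \alpha/q'$ from the $\mcl S$-tail; the baseline $L^\infty$-scale deterministic H\"older bound is absorbed into the constant $C$ in the statement ``$> C + \lambda$''. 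The main technical hurdle is the bookkeeping in Step~2 to confirm that the $L^\infty$-iteration preserves the $\eps^{p/2}$ tail decay; everything else is essentially identical to the first-order case, since Lemma~\ref{L:Ebarrier} was tailored precisely to make the second-order analogue go through in the same way.
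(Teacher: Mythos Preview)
Your proposal is correct and follows essentially the same approach as the paper: reduce to $\tau>1/2$ by parabolic rescaling, observe that $(F^\eps,f^\eps)$ satisfies \eqref{A:Fsuperquadratic}--\eqref{A:fC2bound} with effective viscosity $\eps\nu$ and an $\eps$-independent constant $K$, then run the same $L^\infty$-iteration and covering argument as in Theorem~\ref{T:firstorderscaling} with Lemma~\ref{L:Ebarrier} and Theorem~\ref{T:secondorder} in place of Lemma~\ref{L:Dbarrier} and Theorem~\ref{T:firstorder}. The paper's own proof is in fact terser than yours, simply noting the scale-invariance of $K$ and then deferring entirely to the first-order argument.
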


\begin{proof}
	Arguing as in the proof of Theorem \ref{T:firstorderscaling}, we may assume without loss of generality that $\tau > 1/2$. Notice also that 
	\[
		F^\eps(X,p,x,t) := F \pars{ \eps X, p, \frac{x}{\eps}, \frac{t}{\eps}} \quad \text{for } (X,p,x,t) \in \mbb S^d \times \RR^d \times \RR^d \times [0,T]
	\]
	satisfies \eqref{A:firstorderH} with $\eps\nu$ replacing $\nu$, and, therefore, if we define $f^\eps(x) := \eps^{1/2} f(x/\eps)$, we have $\nor{f^\eps}{\oo} = \eps^{1/2} \nor{f}{\oo}$ and
	\[
		\eps \nu + \nor{f^\eps}{\oo} \nor{Df^\eps}{\oo} + \eps \nu \nor{f^\eps}{\oo} \nor{D^2 f^\eps}{\oo}  \le \nu + \nor{f}{\oo} \nor{Df}{\oo} + \nor{f}{\oo} \nor{D^2 f}{\oo}.
	\]
	As a consequence, we may choose a constant $K > 0$ independently of $\eps > 0$ for which the conclusions of Lemma \ref{L:Ebarrier} and Theorem \ref{T:secondorder} hold with the function $f^\eps$. The rest of the proof then follows exactly as in the proof of Theorem \ref{T:firstorderscaling}.
\end{proof}

\appendix

\section{Controlling stochastic integrals}\label{S:intcontrol}

Throughout the paper, we use the following results that give uniform control over certain stochastic integrals. Assume below that
\begin{equation}\label{appBM}
	B: [-1,0] \times \Omega \to \RR^m \quad \text{is a standard Brownian motion over the probability space } (\Omega, \mbf F, \mbf P).
\end{equation}

\begin{lemma}\label{L:intcontrol1}
Let $m > 0$, $K > 0$, $q > 1$, and $\kappa \in (0,1/2)$. Then there exists a random variable $\mcl D : \Omega \to \RR_+$ and $\lambda_0 =  \lambda_0(\kappa, K,q) > 0$ such that
	\begin{enumerate}[(a)]
	\item for any $p \ge 1$ and some constant $ C = C(\kappa,K,p,q) > 0$,
	\[
		\mbf P( \mcl D > \lambda) \le \frac{Cm^p}{\lambda^p} \quad \text{for all } \lambda \ge \lambda_0,
	\]
	and
	\item for all $\gamma \in W^{1,\oo}([-1,0],\RR^d)$, $\delta \in (0,1]$, $-1 \le s \le t \le 0$, and $f$ satisfying
	\[
		\nor{f}{\oo} \le m \quad \text{and} \quad  \nor{f}{\oo}(1+ \nor{Df}{\oo} )\le K,
	\]
	we have
	\[
		\abs{ \int_s^t f(\gamma_r) \cdot dB_r } \le \delta^{q'}\int_{s}^{t} |\dot \gamma_r|^{q'}dr + \frac{\mcl D}{\delta^{q} }(t-s)^\kappa.
	\]
	\end{enumerate}
\end{lemma}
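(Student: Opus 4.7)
The starting point is that, since $\gamma \in W^{1,\oo}([-1,0],\RR^d)$ is deterministic, the map $r \mapsto f(\gamma_r)$ is a deterministic absolutely continuous function, so the stochastic integral is in fact a Wiener integral, and I may apply the deterministic integration-by-parts formula of Lemma~\ref{L:intzeta1}:
\[
\int_s^t f(\gamma_r) \cdot dB_r = f(\gamma_t) \cdot (B_t - B_s) - \int_s^t Df(\gamma_r) \dot \gamma_r \cdot (B_r - B_s)\,dr.
\]
This trades the singular forcing $dB_r$ for deterministic integration against the increment process $r \mapsto B_r - B_s$, which can be controlled pathwise through Hölder regularity.

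Fix $\kappa_0 \in (\kappa, 1/2)$ (for instance $\kappa_0 := (1/2 + \kappa)/2$), and let
\[
\mathcal{N} := \sup_{-1 \le s < t \le 0} \frac{|B_t - B_s|}{(t-s)^{\kappa_0}}
\]
denote the $\kappa_0$-Hölder seminorm of $B$ on $[-1,0]$. A standard Kolmogorov--Garsia--Rodemich--Rumsey argument shows that $\mathcal{N}$ is a.s.\ finite with Gaussian tails, so $\mathbf{E}[\mathcal{N}^p] < \oo$ for every $p\ge 1$. The boundary term is then dominated using $\|f\|_\infty \le m$, $\kappa_0\ge \kappa$, and $t-s\le 1$ by $m \mathcal{N} (t-s)^\kappa$. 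For the integral term, I apply Young's inequality pointwise in $r$ with the conjugate exponents $(q',q)$:
\[
|\dot\gamma_r|\cdot |Df(\gamma_r)(B_r - B_s)| \le \frac{\delta^{q'} |\dot\gamma_r|^{q'}}{q'} + \frac{|Df(\gamma_r)(B_r - B_s)|^{q}}{q\delta^{q}},
\]
and use $|Df(\gamma_r)(B_r - B_s)| \le \|Df\|_\infty \mathcal{N}(r-s)^{\kappa_0}$ together with $(t-s)^{\kappa_0 q + 1-\kappa}\le 1$ to produce a residual term of the form $\|Df\|_\infty^q \mathcal{N}^q (t-s)^\kappa /(q(\kappa_0 q+1)\delta^q)$.

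The main obstacle is that $\|Df\|_\infty$ is not controlled by $m$ and $K$ individually: the assumption $\|f\|_\infty (1+\|Df\|_\infty) \le K$ only bounds the product $\|f\|_\infty \|Df\|_\infty$, so $\|Df\|_\infty$ can be as large as $K/\|f\|_\infty$ when $\|f\|_\infty$ is small. However, in exactly that regime the integral itself is small, since a direct application of the IBP identity above gives the crude bound $|\int_s^t f(\gamma_r)\cdot dB_r| \le C\|f\|_\infty (\mathcal{N} + \|B\|_\infty)(t-s)^{\kappa_0}$ (with no appearance of $\|Df\|_\infty$, obtained by also integrating by parts on the $\int Df\,\dot\gamma (B-B_s)$ term to re-express it as a Wiener integral of $f(\gamma_r) - f(\gamma_s)$). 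Interpolating between the sophisticated Young-type estimate and this crude $\|f\|_\infty$-scaled estimate through a case analysis on the size of $\|f\|_\infty$ (or, equivalently, of $\|Df\|_\infty$ relative to $K/m$) yields the desired bound with
\[
\mathcal D := C_{K,q,\kappa}\bigl( m \mathcal{N} + m\mathcal{N}^{q} \bigr),
\]
up to the precise form dictated by the case analysis. The tail estimate $\mathbf{P}(\mathcal D > \lambda) \le C m^p/\lambda^p$ for $\lambda \ge \lambda_0(\kappa,K,q)$ then follows from Gaussian tails of $\mathcal{N}$ together with Chebyshev's inequality, since $\mathcal D$ is proportional to $m$ with a fixed polynomial in $\mathcal{N}$. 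The hard part is executing the case analysis so that the $\|Df\|_\infty^q$ contribution is absorbed without damaging the $m^p$-scaling of the tail bound.
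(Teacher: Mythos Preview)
Your integration-by-parts reduction and the bound on the boundary term are fine, but the treatment of the integral term has a genuine gap: neither branch of your dichotomy can handle the regime where $\|Df\|_\infty$ is large. Take $\gamma_r=re_1$, $s=0$, $t=1$, and $f_\epsilon(x)=\epsilon\sin(x_1/\epsilon^2)$, so that $\|f_\epsilon\|_\infty=\epsilon$, $\|Df_\epsilon\|_\infty=1/\epsilon$, and $\|f_\epsilon\|_\infty(1+\|Df_\epsilon\|_\infty)=1+\epsilon\le K$ once $K\ge 1+m$; all of these are admissible. Your Young residual is $\|Df_\epsilon\|_\infty^q\mathcal N^q/\delta^q=\epsilon^{-q}\mathcal N^q/\delta^q$, which blows up as $\epsilon\to 0$, so no single $\mathcal D$ can absorb it. Your ``crude bound'' would require $\bigl|\int_0^1\sin(r/\epsilon^2)\,dB_r\bigr|\le CG(\omega)$ uniformly in $\epsilon$; but writing $L=1/\epsilon^2$ and noting that $\{\sqrt2\int_0^1\sin(k\pi r)\,dB_r\}_{k\ge 1}$ are i.i.d.\ standard normals shows $\sup_{L>0}\bigl|\int_0^1\sin(Lr)\,dB_r\bigr|=+\infty$ a.s. The ``second integration by parts'' you invoke is circular: integrating $\int_s^t\tfrac{d}{dr}[f(\gamma_r)]\,(B_r-B_s)\,dr$ by parts simply returns the identity you started from. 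In short, the pathwise H\"older seminorm $\mathcal N$ is too coarse an object to capture the cancellation in $\int f(\gamma)\,dB$ when $f\circ\gamma$ oscillates rapidly.

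The proof in \cite{Shomog} (whose second-order analogue is written out here for Lemma~\ref{L:intcontrol2}) does not attempt a direct pathwise estimate. For \emph{fixed} $s<t$ one partitions $[s,t]$ into subintervals of length $h$, applies integration by parts on each piece, and controls the moments of the resulting sums using the \emph{independence} of Brownian increments across the partition (Lemma~\ref{L:sums}). The step size is chosen as $h\sim(\|f\|_\infty/\|Df\|_\infty)(t-s)^{1/q'}$, which is precisely what makes the final bounds depend only on the product $\|f\|_\infty\|Df\|_\infty\le K$ rather than on $\|Df\|_\infty$ alone. This produces a uniform moment estimate $\mathbf E\bigl[\sup_\mu(Z_\mu(s,t)/(t-s)^{1/2}-M_1)_+^p\bigr]\le M_2\, m^p$ for the subadditive quantity $Z_\mu(s,t):=\bigl(\delta^q\bigl|\int_s^t f(\gamma)\,dB\bigr|-\delta^{q+q'}\int_s^t|\dot\gamma|^{q'}\bigr)_+$, after which the parameter-dependent Kolmogorov criterion (Lemma~\ref{L:Kolmogorov}) upgrades it to the uniform $(t-s)^\kappa$ bound that defines $\mathcal D$.
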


Assume now that
\[
	W: [-1,0] \times \mcl A \to \RR \quad \text{is a Brownian motion defined over a probability space } (\mcl A, \mcl F, \mbb P).
\]
The probability space $\mcl A$ is independent of $\Omega$. Below, we prove a statement that is true for $\mbf P$-almost every sample path $B$ of the Brownian motion from \eqref{appBM}, which involves taking the expectation with respect to the Brownian motion $W$. Effectively, $B$ and $W$ are independent Brownian motions, and $\mbb E$ can be interpreted as the expectation conditioned with respect to $B$.

\begin{lemma}\label{L:intcontrol2}
	Let $q > 1$, $0 < \kappa < \frac{1}{2}$, $m > 0$, and $K > 0$. Then there exist a random variable $\mcl E: \Omega \to \RR_+$ and $\lambda_0 := \lambda_0(\kappa,K,q) > 0$ such that
	\begin{enumerate}[(a)]
	\item for any $p \ge 1$ and some constant $C = C(\kappa,K,p,q) > 0$, 
	\[
		\mbf P( \mcl E > \lambda) \le \frac{C m^p}{\lambda^p} \quad \text{for all } \lambda \ge \lambda_0,
	\]
	and
	\item for all $0 < \delta \le 1$; processes $(\alpha, \sigma,X) : [-1,0] \times \mcl A \to \RR^d \times \RR^d \times \RR^d$ that are $W$-adapted such that
	\begin{equation}\label{processes}
		\alpha, \sigma \in L^\oo([-1,0] \times \mcl A) \quad \text{and} \quad dX_r = \alpha_r dr + \sigma_r dW_r \quad \text{for } r \in [-1,0];
	\end{equation}
	$W$-stopping times $-1 \le s \le t \le 0$; and $f \in C^1(\RR^d,\RR^m)$ satisfying
	\begin{equation}\label{fs}
		\nor{f}{\oo} \le m \quad \text{and} \quad \nor{f}{\oo} \pars{1+ \nor{Df}{\oo} + \nor{\sigma \sigma^t}{\oo} \nor{D^2 f}{\oo} } \le K;
	\end{equation}
	we have
	\[
		\abs{ \mbb E \left[ \int_{s}^{t} f(X_r) \cdot dB_r  \right]} \le \delta^{q'} \mbb E \int_{s}^{t} |\alpha_r|^{q'}dr + \frac{\mcl E}{\delta^{q} }(t-s)^\kappa.
	\]
	\end{enumerate}
\end{lemma}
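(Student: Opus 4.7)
The proof mimics that of Lemma~\ref{L:intcontrol1}, exploiting the independence $B \perp W$ (hence $B$ independent of $X$) to reduce the stochastic integral against $dB$ to pathwise quantities controlled by the H\"older regularity of $B$. The key tools are It\^o's product formula---which applies in the simple form $d(fB) = f\,dB + B\,df$ since the covariation $\langle f(X),B\rangle$ vanishes---and a Young-type inequality to balance the $|\alpha_r|^{q'}$ energy against the H\"older noise from $B$. I would first introduce the $\kappa$-H\"older seminorm $\mcl G_\kappa := \sup_{-1 \le u < v \le 0} |B_v - B_u|/(v-u)^\kappa$, which has moments of every order since $\kappa < 1/2$, by Garsia--Rodemich--Rumsey applied to the Gaussian process $B$.

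Integration by parts for $f(X_r) B_r$, combined with the It\^o expansion $df(X_r) = \bigl[Df(X_r)\alpha_r + \tfrac12 \tr(D^2 f(X_r)\sigma_r\sigma_r^T)\bigr] dr + Df(X_r)\sigma_r\,dW_r$ and the substitution $B_r = B_s + (B_r - B_s)$ to cancel the $B_s$ boundary contribution, yields the pathwise identity
\begin{align*}
\int_s^t f(X_r) \cdot dB_r &= f(X_t)(B_t - B_s) - \int_s^t (B_r - B_s)\bigl[Df(X_r)\alpha_r + \tfrac12 \tr(D^2 f(X_r)\sigma_r\sigma_r^T)\bigr] dr \\
&\quad - \int_s^t (B_r - B_s) Df(X_r)\sigma_r\,dW_r.
\end{align*}
Taking $\mbb E$ on $\mcl A$, the $dW$-integral vanishes by optional stopping: for each fixed realization of $B$, the integrand is $W$-adapted since $s$ is a $W$-stopping time and $B_s$ is therefore $\mcl F_s^W$-measurable. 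The boundary term is then bounded by $\|f\|_\infty \mcl G_\kappa (t-s)^\kappa \le m\mcl G_\kappa(t-s)^\kappa$. The drift term is handled by Young's inequality $ab \le \delta^{q'} a^{q'}/q' + b^q/(q\delta^q)$ applied pointwise with $a = |\alpha_r|$ and $b = \|Df\|_\infty \mcl G_\kappa (r-s)^\kappa$, yielding the desired $\delta^{q'} \mbb E\int |\alpha_r|^{q'}\,dr$ plus a residual of order $\|Df\|_\infty^q\mcl G_\kappa^q (t-s)^{q\kappa+1}/\delta^q$; the common factor $(t-s)^\kappa$ is extracted using $(t-s) \le 1$ and $(q-1)\kappa + 1 \ge 0$. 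The It\^o-correction term is controlled via $\mbb E \int_s^t \|\sigma_r\|^2\,dr \le \|\sigma\sigma^T\|_\infty (t-s)$ together with the combined bound $\|f\|_\infty \|\sigma\sigma^T\|_\infty \|D^2 f\|_\infty \le K$ from \eqref{fs}.

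Finally, one packages everything into $\mcl E$ as a polynomial in $\mcl G_\kappa$ with coefficients depending on $K, \kappa, q$, and verifies the tail bound $\mbf P(\mcl E > \lambda) \le Cm^p/\lambda^p$ via Markov's inequality and the Gaussian moment bounds for $\mcl G_\kappa$. The delicate point---and the principal obstacle---is organizing the $\|Df\|_\infty^q$ and $\|D^2 f\|_\infty\|\sigma\sigma^T\|_\infty$ factors arising from Young's inequality so that the resulting $\mcl E$ indeed scales linearly in $m$: this requires combining the individual product constraints in \eqref{fs} with the factor of $\|f\|_\infty \le m$ coming from the boundary term, exactly in the spirit of the bookkeeping already performed for Lemma~\ref{L:intcontrol1}. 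Once that is set up, the only new ingredient is the It\^o correction $\tfrac12\tr(D^2 f\sigma\sigma^T)$, which is absorbed using the analogous constraint from \eqref{fs} and contributes a strictly lower-order power of $(t-s)$.
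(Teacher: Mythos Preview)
Your outline has a real gap at precisely the point you flag as ``the principal obstacle,'' and it is not a matter of bookkeeping. After Young's inequality, the drift residual is of order $\|Df\|_\infty^q \mcl G_\kappa^q (t-s)^{q\kappa+1}/\delta^q$, and the It\^o correction contributes a term of order $\|\sigma\sigma^T\|_\infty \|D^2 f\|_\infty \mcl G_\kappa (t-s)^{1+\kappa}$. Neither $\|Df\|_\infty^q$ nor $\|\sigma\sigma^T\|_\infty \|D^2 f\|_\infty$ is bounded under~\eqref{fs}: only the \emph{products} $\|f\|_\infty\|Df\|_\infty$ and $\|f\|_\infty\|\sigma\sigma^T\|_\infty\|D^2 f\|_\infty$ are controlled by $K$, so $\|Df\|_\infty$ and the second-order analogue can be arbitrarily large when $\|f\|_\infty$ is small. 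Once you have applied the global bound $|B_r-B_s|\le \mcl G_\kappa(t-s)^\kappa$ on the whole interval, there is no remaining factor of $\|f\|_\infty$ available to pair with $\|Df\|_\infty^q$, and hence no way to assemble a single random variable $\mcl E$---depending only on $B$---that dominates the right-hand side uniformly over all admissible $(f,\sigma)$.

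The paper's proof resolves this with a substantively different decomposition. It subdivides $[s,t]$ into $N$ pieces of length $h$ chosen proportional to $(\|f\|_\infty/\|Df\|_\infty)(t-s)^{1/q'}$ and to $\|f\|_\infty/(\|\sigma\sigma^T\|_\infty\|D^2 f\|_\infty)$, applies the integration-by-parts identity on each piece, and then uses the independence of the Brownian oscillations $\Delta_k:=\max_{[\tau_{k-1},\tau_k]}|B_u-B_v|$ across subintervals (Lemma~\ref{L:sums}) to obtain square-root cancellation in the fluctuating part $\sum_k(\Delta_k-\mbf E\Delta_k)$. The scale-adapted choice of $h$ is exactly what converts the uncontrolled $\|Df\|_\infty^q$ into the bounded combination $(\|f\|_\infty\|Df\|_\infty)^{q/2}\le K^{q/2}$, and similarly absorbs the second-order factor. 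The resulting uniform moment bound is then fed into the parameter-dependent Kolmogorov criterion of Lemma~\ref{L:Kolmogorov}. This is the same mechanism behind Lemma~\ref{L:intcontrol1} in~\cite{Shomog}; it is not recoverable from a single global H\"older-seminorm estimate on $B$.
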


We note that the integrals against $dB$ appearing in Lemmas \ref{L:intcontrol1} and \ref{L:intcontrol2} are interpreted as in Section \ref{S:prelim}, and, in particular, subsection \ref{SS:formulae}.

The proof of Lemma \ref{L:intcontrol1} can be found in \cite{Shomog}. The arguments for Lemma \ref{L:intcontrol2} are similar, but some further details are needed to account for the use of It\^o's formula and the interaction between $B$ and $W$.

We first give a parameter-dependent variant of Kolmogorov's continuity criterion. Its statement and proof are very similar to that in \cite{Shomog}.

\begin{lemma}\label{L:Kolmogorov}
	Define $\triangle := \left\{ (s,t) \in [-1,0], \; s \le t \right\}$ and fix a parameter set $\mcl M$. Let $(M_\mu)_{\mu \in \mcl M}: \Omega \to \RR_+$ and $(Z_\mu)_{\mu \in \mcl M}: \triangle \times \Omega \to \RR_+$ be such that
	\begin{equation}\label{subadditive}
		Z_\mu(s,u) \le Z_\mu(s,t) + Z_\mu(t,u) \quad \text{for all } \mu \in \mcl M \text{ and } -1 \le s \le t \le u \le 0,
	\end{equation}
	and, for some constants $a > 0$, $\beta \in (0,1)$, $p \ge 1$,
	\[
		\sup_{(s,t) \in \triangle} \mbf E \left[\sup_{\mu \in \mcl M}\pars{  \frac{ Z_\mu(s,t)}{(t-s)^{\beta + 1/p}} - M_\mu}_+^p \right] \le a.
	\]
	Then, for all $0 < \kappa < \beta$, there exist $C_1 = C_1(\kappa) > 0$ and $C_2 = C_2(p,\kappa,\beta) > 0$ such that, for all $\lambda \ge 1$,
	\[
		\mbf P\pars{ \sup_{\mu \in \mcl M} \sup_{(s,t) \in \triangle} \pars{ \frac{Z_\mu(s,t)}{(t-s)^\kappa}  - C_1 M_\mu} > \lambda } \le \frac{C_2 a}{\lambda^p}.
	\]
\end{lemma}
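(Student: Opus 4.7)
The plan is to adapt the classical dyadic chaining proof of Kolmogorov's continuity criterion, carrying the parameter $\mu$ through uniformly. Set $D_n := \{-1 + k 2^{-n} : 0 \le k \le 2^n\}$, $D := \bigcup_{n \ge 0} D_n$, and $t_k^n := -1 + k 2^{-n}$. Define the level-$n$ maximum
\[
Y_n := \sup_{\mu \in \mcl M} \max_{0 \le k < 2^n} \pars{ \frac{Z_\mu(t_k^n, t_{k+1}^n)}{2^{-n(\beta + 1/p)}} - M_\mu }_+,
\]
so that a union bound over the $2^n$ intervals, together with the standing hypothesis applied at each pair $(t_k^n, t_{k+1}^n)$, gives $\mbf E[Y_n^p] \le 2^n a$.

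Because $\kappa < \beta$ and $p \ge 1$, the interval $(1/p,\, \beta + 1/p - \kappa)$ is nonempty, and I pick $\theta$ strictly inside it, say $\theta := 1/p + (\beta - \kappa)/2$. Set $K := \sup_{n \ge 0} 2^{-n\theta} Y_n$. A geometric-series computation yields
\[
\mbf E[K^p] \le \sum_{n \ge 0} 2^{-np\theta}\, \mbf E[Y_n^p] \le a \sum_{n \ge 0} 2^{n(1 - p\theta)} \le C_2(p,\kappa,\beta)\, a,
\]
the sum being finite thanks to $p\theta > 1$.

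The heart of the proof is the pointwise estimate: for every $\mu$ and every $s < t$ in $D$,
\[
Z_\mu(s,t) \le C_1(\kappa)\, (M_\mu + K)\,(t-s)^\kappa.
\]
To establish it, let $N$ be the unique integer with $2^{-(N+1)} < t-s \le 2^{-N}$, and invoke the standard dyadic decomposition: $[s,t]$ partitions into consecutive dyadic sub-intervals with at most two sub-intervals at each level $n \ge N$. Iterated subadditivity together with the bound $Z_\mu(t_k^n, t_{k+1}^n) \le (Y_n + M_\mu)\, 2^{-n(\beta+1/p)} \le (K 2^{n\theta} + M_\mu)\, 2^{-n(\beta+1/p)}$ gives
\[
Z_\mu(s,t) \le 2 \sum_{n \ge N} \pars{ K 2^{n\theta} + M_\mu }\, 2^{-n(\beta + 1/p)} \le C\, K\, 2^{-N(\beta+1/p-\theta)} + C\, M_\mu\, 2^{-N(\beta+1/p)}.
\]
The choice of $\theta$ gives $\beta + 1/p - \theta = (\beta+\kappa)/2 > \kappa$, and trivially $\beta + 1/p > \kappa$; combined with $2^{-N} \le 2(t-s)$ and $t-s \le 1$, both terms are dominated by a constant multiple of $(t-s)^\kappa$.

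Combining the pointwise estimate with Markov's inequality applied to $K$ then produces the claimed tail bound on pairs $(s,t) \in D \times D$; extension to all of $\triangle$ is immediate because in the intended applications $Z_\mu(\cdot,\cdot)$ is a.s. continuous and $D$ is dense (one can also simply take the supremum over $D$ in the conclusion, which is the only form used). The main technical obstacle is the exponent bookkeeping: one has to guarantee that $\theta$ admits a choice in $(1/p,\, \beta + 1/p - \kappa)$ so that the defining series for $\mbf E[K^p]$ converges while the pointwise geometric series simultaneously recovers the $(t-s)^\kappa$ decay. This is precisely why the hypothesis inserts the extra $1/p$ in the denominator — it provides the $2^{-n/p}$ gain needed to overcome the $2^n$ factor from the level-$n$ union bound.
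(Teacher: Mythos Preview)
Your argument is correct and is precisely the standard dyadic-chaining proof of Kolmogorov's continuity criterion, adapted to carry the parameter $\mu$ and the offset $M_\mu$ through uniformly; the paper itself does not give a proof but refers to \cite{Shomog}, where the same approach is used. Two small remarks: first, to obtain $C_1 = C_1(\kappa)$ as stated (rather than $C_1(\kappa,\beta,p)$), bound each geometric tail by $\sum_{n \ge N} 2^{-n\kappa}$ using $\beta+1/p-\theta > \kappa$ and $\beta+1/p > \kappa$ before summing, instead of summing with the sharper exponents; second, your remark about extending from dyadic pairs to all of $\triangle$ via continuity is exactly how the lemma is applied in the paper (the process $Z_\mu$ there is built from integrals and is a.s.\ continuous), so the caveat is appropriate.
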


The next result gives an estimate for moments of sums of certain centered and independent random variables.

\begin{lemma}\label{L:sums}
	Let $(Y_k)_{k=1}^n: \Omega \to \RR$ be a sequence of centered and independent random variables such that, for all $p \ge 1$ and for some $\mu > 0$ and $C = C(p) > 0$,
	\[
		\mbf E |Y_1|^p \le C \mu^p.
	\]
	Then there exists a constant $\tilde C = \tilde C(p) > 0$ such that
	\[
		\mbf E \abs{ \sum_{k=1}^n Y_k}^p \le \tilde C n^{p/2} \mu^p.
	\]
\end{lemma}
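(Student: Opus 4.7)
The plan is to split into the two regimes $1 \le p \le 2$ and $p \ge 2$ and reduce each to a standard moment inequality for sums of centered independent random variables.

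For $p \in [1,2]$, I would first handle the case $p=2$ directly. By independence and the centering assumption, the cross terms vanish, so
\[
\mbf E \abs{\sum_{k=1}^n Y_k}^2 = \sum_{k=1}^n \mbf E Y_k^2 \le C(2) n \mu^2.
\]
Then Jensen's inequality applied to the concave map $x \mapsto x^{p/2}$ gives
\[
\mbf E \abs{\sum_{k=1}^n Y_k}^p \le \pars{\mbf E \abs{\sum_{k=1}^n Y_k}^2}^{p/2} \le (C(2))^{p/2} n^{p/2} \mu^p,
\]
which is the claim in this range.

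For $p \ge 2$, I would invoke the Marcinkiewicz-Zygmund inequality (equivalently, the Burkholder-Davis-Gundy inequality applied to the discrete-time martingale $S_k := \sum_{j \le k} Y_j$, whose quadratic variation is $\sum_{j\le k} Y_j^2$): there exists $C_p^{MZ} > 0$, depending only on $p$, such that
\[
\mbf E \abs{\sum_{k=1}^n Y_k}^p \le C_p^{MZ}\, \mbf E \pars{\sum_{k=1}^n Y_k^2}^{p/2}.
\]
Since $p/2 \ge 1$, Jensen's inequality (or equivalently, the power-mean inequality) yields
\[
\pars{\sum_{k=1}^n Y_k^2}^{p/2} = n^{p/2}\pars{\frac{1}{n}\sum_{k=1}^n Y_k^2}^{p/2} \le n^{p/2-1} \sum_{k=1}^n |Y_k|^p.
\]
Taking expectations and using the hypothesis $\mbf E |Y_k|^p \le C(p) \mu^p$ then gives
\[
\mbf E \abs{\sum_{k=1}^n Y_k}^p \le C_p^{MZ} C(p)\, n^{p/2} \mu^p,
\]
completing the proof upon setting $\tilde C(p) := C_p^{MZ} C(p) \vee (C(2))^{p/2}$.

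There is no real obstacle here: the content is carried entirely by Marcinkiewicz-Zygmund, which is a classical result, and the rest is routine bookkeeping. The only subtlety worth flagging is that the hypothesis provides moment bounds only for the individual $Y_k$, so after applying Marcinkiewicz-Zygmund one must move from $\mbf E (\sum Y_k^2)^{p/2}$ to $\sum \mbf E |Y_k|^p$; this is exactly where the factor $n^{p/2-1}$ from Jensen combines with the $n$ terms in the sum to yield the sharp exponent $n^{p/2}$.
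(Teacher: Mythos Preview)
Your proof is correct, but the paper takes a different, more self-contained route. Rather than invoking Marcinkiewicz--Zygmund as a black box, the paper first symmetrizes the $Y_k$ by comparison with $\sum \eps_k Y_k$ for independent Rademacher signs $\eps_k$, then reduces by H\"older to the case $p = 2m$ with $m \in \NN$, expands $\bigl(\sum Y_k\bigr)^{2m}$ multinomially, and uses the symmetry to kill all monomials in which some $Y_{k_i}$ appears to an odd power; the surviving terms are counted combinatorially (by $\binom{m+n-1}{n-1} \le C_m n^m$) and each is bounded via H\"older by $C\mu^{2m}$. Your argument is shorter and perfectly standard, with Marcinkiewicz--Zygmund doing the heavy lifting and the Jensen step $n^{p/2-1}\sum |Y_k|^p$ extracting the correct exponent; the paper's argument is more elementary in that it requires no external moment inequality beyond H\"older, at the cost of the symmetrization and combinatorial bookkeeping.
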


\begin{proof}
	Let $(\eps_k)_{k=1}^n$ be a sequence of independent Rademacher random variables, that is,
	\[
		\mbf P(\eps _k = 1) = \mbf P(\eps_k = -1) = \frac{1}{2} \quad \text{for all } k = 1,2,\ldots,n,
	\]
	such that $(\eps_k)_{k=1}^n$ is independent of the sequence $(Y_k)_{k=1}^n$. It then follows (see Kahane \cite{K}) that
	\[
		\mbf E \abs{ \sum_{k=1}^n Y_k}^p \le 2^p \mbf E \abs{ \sum_{k=1}^n \eps_k Y_k}^p.
	\]
	Therefore, upon replacing $Y_k$ with $\eps_k Y_k$, we may assume without loss of generality that each $Y_k$ is symmetric, that is, $Y_k$ and $-Y_k$ are identically distributed.
	
	Observe next that if the result holds for some $p \ge 1$, then, for any $q <p$, by H\"older's inequality,
	\[
		\mbf E \abs{\sum_{k=1}^n Y_k}^q \le \pars{\mbf E \abs{\sum_{k=1}^n Y_k}^p}^{q/p} \le \pars{\tilde C n^{p/2} \mu^p}^{q/p} \le \tilde C^{q/p} n^{q/2} \mu^q.
	\]
	Therefore, it suffices to prove the result for $p = 2m$ with $m \in \NN$. 
	
	We compute
	\[
		\abs{ \sum_{k=1}^n Y_k}^{2m} = \sum Y_{k_1}^{j_1} Y_{k_2}^{j_2} \cdots Y_{k_\ell}^{j_\ell},
	\]
	where the sum is taken over $1 \le k_1 < k_2 < \cdots < k_\ell \le n$ and $j_1 + j_2 + \cdots + j_\ell = 2m$. In view of the symmetry and independence of the $Y_k$, all summands for which one or more of the $j_i$ values is odd have zero expectation. Thus,
	\[
		\mbf E \abs{ \sum_{k=1}^n Y_k}^{2m} = \sum \mbf EY_{k_1}^{2i_1} Y_{k_2}^{2i_2} \cdots Y_{k_\ell}^{2i_\ell},
	\]
	where the sum is taken over $1 \le k_1 < k_2 < \cdots < k_\ell \le n$ and $i_1 + i_2 + \cdots + i_\ell = m$. A combinatorial argument implies that the cardinality of such terms is equal to $\binom{m + n-1}{n-1}$, while H\"older's inequality gives
	\[
		\mbf EY_{k_1}^{2i_1} Y_{k_2}^{2i_2} \cdots Y_{k_\ell}^{2i_\ell} 
		\le \pars{ \mbf E Y_{k_1}^{2m}}^{i_1/m} \pars{ \mbf E Y_{k_2}^{2m}}^{i_2/m} \cdots \pars{ \mbf E Y_{k_\ell}^{2m}}^{i_\ell/m} \le C \mu^{2m},
	\]
	and, therefore,
	\[
		\mbf E \abs{ \sum_{k=1}^n Y_k}^{2m} \le C\binom{m + n-1}{n-1} \mu^{2m} \le C n^m \mu^{2m}.
	\]	
\end{proof}

Finally, we turn to the proof of Lemma \ref{L:intcontrol2}.
\begin{proof}[Proof of Lemma \ref{L:intcontrol2}]
	Let $\mathscr C_{m,K}$ be the space consisting of $(\alpha,\sigma,X,f)$ satisfying \eqref{processes} and \eqref{fs}, define the parameter set
	\[
		\mcl M := (0,1) \times \mathscr C_{m,K},
	\]
	and, for each $\mu = (\delta,\alpha,\sigma,X,f) \in \mcl M$ and $(s,t) \in \triangle$, the stochastic process
	\begin{equation}\label{subadditiveguy}
		Z_\mu(s,t) := \pars{ \abs{  \mbb E \left[ \delta^{q} \int_{s}^t f(X_r)\cdot dB_r \right] } -\delta^{q+q'}  \mbb E  \int_{s}^t |\alpha_r|^{q'}dr }_+,
 	\end{equation}
	which can easily be seen to satisfy \eqref{subadditive}.
	
	We first show that there exist constants  $M_1 = M_1(K,q) > 0$ and $M_2 = M_2(K,p,q) > 0$ such that
	\begin{equation}\label{pointwisemoment}
		\sup_{-1 \le s \le t \le 0} \mbf E \left[ \sup_{\mu \in \mcl M} \pars{  \frac{ Z_\mu(s,t)}{ (t - s)^{1/2} } - M_1   }^p_+\right] \le M_2 m^p.
	\end{equation}
	
	Fix $s,t \in [-1,0]$ with $s \le t$. We split into two cases, depending on the size of the interval $[s,t]$.
		
	{\it Case 1.} Assume first that
\begin{equation}\label{Deltacase1}
	t-s \le \frac{\nor{f}{\oo}^q}{\nor{Df}{\oo}^q} \wedge \frac{\nor{f}{\oo}}{ \nor{\sigma \sigma^t}{\oo,[-1,0]} \nor{D^2 f}{\oo} }.
\end{equation}

By Lemma \ref{L:intzeta2},
\begin{align*}
	\mbb E \left[ \int_{s}^{t} f(X_r) \cdot dB_r \right]
	&= \mbb E \left[ f(X_t)\cdot (B_t - B_s) \right]\\
	&- \mbb E \left[ \int_{s}^{t} \pars{ Df(\gamma_r) \cdot  \alpha_r + \frac{1}{2} \tr(\sigma_r \sigma_r^t D^2 f(X_r)) } \cdot (B_r - B_{s})dr \right].
\end{align*}
Setting
\[
	\Delta := \max_{r_1,r_2 \in [s,t]} \abs{ B_{r_1} - B_{r_2}}
\]
and invoking \eqref{Deltacase1} and the Young and H\"older inequalities then gives, for some constant $C = C(K, q) > 0$,
\begin{align*}
	&\abs{ \mbb E \left[  \int_s^t f(X_r) \cdot dB_r  \right] } \\
	&\le \nor{f}{\oo} \Delta  + \nor{Df}{\oo} \Delta  \mbb E  \int_s^t |\alpha_r|dr + \frac{1}{2}\nor{\sigma \sigma^t}{\oo} \nor{D^2 f}{\oo} \Delta (t-s)\\
	&\le \nor{f}{\oo} \pars{\frac{3}{2} \Delta + \Delta\pars{ \mbb E \int_{s}^{t} |\alpha_r|^{q'}dr}^{1/q'}}\\
	&\le \nor{f}{\oo} \pars{ \frac{3}{2} \Delta + \frac{C\Delta^q }{\delta^q} } + \delta^{q'}  \mbb E \int_{s}^{t} |\alpha_r|^{q'}dr,
\end{align*}
and so
\[
	\sup_{\mu \in \mcl M} Z_\mu(s,t)
	\le m \pars{ \frac{3}{2} \Delta + C\Delta^q}.
\]
Raising both sides to the power $p$, taking the expectation $\mbf E$ over $\Omega$, and invoking the scaling properties of Brownian motion yield, for some constant  $C = C(K,p,q) > 0$ that changes from line to line,
\begin{align*}
	\mbf E \left[ \sup_{\mu \in \mcl M}  Z_\mu(s,t)^p \right]
	&\le C m^p \pars{ \mbf E \Delta^p + \mbf E \Delta^{pq} }\\
	&\le C m^p (t-s)^{p/2},
\end{align*}
and \eqref{pointwisemoment} then follows in this case.

{\it Case 2.} Assume now that
\begin{equation}\label{Deltacase2}
	t - s > \frac{\nor{f}{\oo}^q}{\nor{Df}{\oo}^q} \wedge \frac{\nor{f}{\oo}}{ \nor{\sigma \sigma^t}{\oo,[-1,0]} \nor{D^2 f}{\oo}}. 
\end{equation}
Set
\begin{equation}\label{h}
	h := \left[ \frac{ \nor{f}{\oo}}{\nor{Df}{\oo}} (t-s)^{1/q'}  \right] \wedge \frac{\nor{f}{\oo}}{ \nor{\sigma \sigma^t}{\oo} \nor{D^2 f}{\oo} }
\end{equation}
and let $N \in \NN$ be such that
\[
	\frac{t-s}{h} \le N < \frac{t-s}{h} + 1.
\]
Note that \eqref{Deltacase2} implies that $h \le t-s$, and so
\begin{equation}\label{Nh}
	t-s \le Nh < 2 (t-s)
\end{equation}

For $k = 0,1,2,\ldots, N-1$, set $\tau_k := s + kh$ and $\tau_N = t$, and, for $k = 1,2,\ldots, N$, define
\[
	\Delta_k = \max_{u,v \in [\tau_{k-1},\tau_k]} \abs{ B_u - B_v}.
\]
Using Lemma \ref{L:intzeta2}, we write
\begin{align*}
	\mbb E\left[ \int_{s}^{t} f(X_r)\cdot dB_r\right]
	&= \sum_{k=1}^{N} \mbb E \left[ \int_{\tau_{k-1}}^{\tau_k} f(X_r) \cdot dB_r \right] \\
	&= \mathrm{I} - \mathrm{II} - \mathrm{III},
\end{align*}
where
\[
	\mathrm{I} := \sum_{k=1}^N \mbb E\left[  f(X_{\tau_k}) \cdot (B_{\tau_k} - B_{\tau_{k-1}} )\right],
\]
\begin{align*}
	\mathrm{II} 
	&:= \sum_{k=1}^N \mbb E \left[\int_{\tau_{k-1}}^{\tau_k} Df(X_r)\alpha_r \cdot (B_r - B_{\tau_{k-1}})dr \right],
\end{align*}
and
\begin{align*}
	\mathrm{III}
	&:= \frac{1}{2} \sum_{k=1}^N \mbb E \left[\int_{\tau_{k-1}}^{\tau_k} \tr(\sigma_r \sigma_r^t D^2 f(X_r)) \cdot (B_r - B_{\tau_{k-1}})dr \right].
\end{align*}

We estimate
\[
	|\mathrm{I} | \le \nor{f}{\oo} \sum_{k=1}^N \Delta_k \quad \text{and} \quad |\mathrm{III}| \le \frac{h}{2} \nor{\sigma \sigma^t}{\oo} \nor{D^2 f}{\oo} \sum_{k=1}^{N} \Delta_k,
\]
and, for all $\eps > 0$, Young's inequality yields
\begin{align*}
	|\mathrm{II}| &\le \nor{Df}{\oo} \sum_{k=1}^{N} \Delta_k \mbb E  \int_{\tau_{k-1}}^{\tau_k} |\alpha_r|dr\\
	&\le \nor{Df}{\oo} h^{1/q} \sum_{k=1}^{N} \Delta_k \pars{  \mbb E  \int_{\tau_{k-1}}^{\tau_k} | \alpha_r|^{q'} dr}^{1/q'} \\
	&\le \nor{Df}{\oo} h^{1/q} \pars{ \frac{1}{q\eps^q} \sum_{k=1}^{N} \Delta_k^q +  \frac{\eps^{q'}}{q'}   \mbb E  \int_{s}^{t} |\alpha_r|^{q'}dr }.
\end{align*}
Combining the three estimates gives
\begin{equation}\label{unifst}
	\begin{split}
	\abs{\mbb E \left[ \int_{s}^{t} f(X_r) \cdot dB_r \right]}
	&\le \pars{\nor{f}{\oo} + \frac{h}{2} \nor{\sigma \sigma^t}{\oo} \nor{D^2 f}{\oo} } \sum_{k=1}^N \Delta_k \\
	&+ \nor{Df}{\oo} h^{1/q} \pars{ \frac{1}{q\eps^q} \sum_{k=1}^N \Delta_k^q +  \frac{\eps^{q'}}{q'}  \mbb E  \int_{s}^{t} | \alpha_r|^{q'}dr }.
	\end{split}
\end{equation}

We now set
\[
	\eps := \delta \pars{ \frac{q'}{\nor{Df}{\oo} h^{1/q}}}^{1/q'}.
\]
In particular,
\[
	\eps^{q'} = \frac{q'\delta^{q'}}{\nor{Df}{\oo} h^{1/q}} \quad \text{and} \quad \eps^q = \frac{(q')^{q-1}\delta^q}{\nor{Df}{\oo}^{q-1} h^{1/q'}},
\]
so that \eqref{unifst} becomes, for some $C = C(q) > 0$,
\begin{align*}
	\abs{ \mbb E \left[ \int_{s}^{t} f(X_r) \cdot dB_r  \right] } 
	&\le \pars{\nor{f}{\oo} + \frac{h}{2} \nor{\sigma \sigma^t}{\oo} \nor{D^2 f}{\oo} } \sum_{k=1}^N \Delta_k \\
	&+ \frac{C}{\delta^q} \nor{Df}{\oo}^q h \sum_{k=1}^N \Delta_k^q + \delta^{q'}   \mbb E  \int_{s}^{t} | \alpha_r|^{q'}dr .
\end{align*}
For $k = 1,2,\ldots,N$, the constants
\[
	a_k := \mbf E \Delta_k \quad \text{and} \quad b_k := \mbf E \Delta_k^q
\]
satisfy, for some $a > 0$ and $b = b(q) > 0$,
\[
	a_k \le ah^{1/2} \quad \text{and} \quad b_k \le b h^{q/2}.
\]
Then \eqref{h} and \eqref{Nh} give
\begin{align*}
	\pars{\nor{f}{\oo} + \frac{h}{2} \nor{\sigma \sigma^t}{\oo} \nor{D^2 f}{\oo} } \sum_{k=1}^N a_k
	&\le   \frac{3}{2} \nor{f}{\oo} Nh^{1/2} a \leq 3a \nor{f}{\oo}(t-s)h^{-1/2}\\
	& \le 3a(t-s)\nor{f}{\oo}^{1/2}  \left(\left[\nor{Df}{\oo}^{1/2}(t-s)^{-1/(2q')}\right]\vee \left[   \nor{\sigma \sigma^t}{\oo}^{1/2} \nor{D^2 f}{\oo}^{1/2}\right]\right)\\
	& \leq 3a K^{1/2} (t-s)^{1/2}
\end{align*}
and
\begin{align*}
	\nor{Df}{\oo}^q h \sum_{k=1}^N b_k &\le b \nor{Df}{\oo}^q N h^{1 + q/2}
	\le 2b(t-s) \nor{Df}{\oo}^q h^{q/2} \\
	&\le 2b(t-s) \nor{f}{\oo}^{q/2} \nor{Df}{\oo}^{q/2} (t-s)^{\frac{q}{2q'}} \le 2b K^{\frac{q}{2}} (t-s)^{\frac{q+1}{2}}.
\end{align*}
Therefore, because $0 < \delta \le 1$, we find that, for some constant $M_1 = M_1(K,q,m) > 0$,
\begin{equation}\label{almostpointwisemoment}
	\begin{split}
	&\pars{ Z_\mu(s,t)- M_1 (t-s)^{1/2}  }_+ \\
	&\qquad \le M_1 \pars{ \nor{f}{\oo} \abs{ \sum_{k=1}^N \pars{ \Delta_k - a_k} } +  C\nor{Df}{\oo}^q h\abs{ \sum_{k=1}^N (\Delta_k^q- b_k)} }.
	\end{split}
\end{equation}
The collections $(\Delta_k - a_k)_{k=1}^N$ and $(\Delta_k^q  -b_k)_{k=1}^N$ consist of independent and centered random variables. The scaling properties of Brownian motion yield, for any $k = 1,2,\ldots, N$ and $p_0 > 0$ and constants $A_1 = A_1(p_0) > 0$ and $A_2 = A_2(p_0,q) > 0$,
\[
	\mbf E \abs{ \Delta_k - a_k}^{p_0} \le A_1 h^{p_0/2} \quad \text{and} \quad \mbf E \abs{ \Delta_k^q - b_k}^{p_0} \le A_2 h^{p_0 q/2}.
\]
It is then a consequence of \eqref{Nh} and Lemma \ref{L:sums} that, for some constants $\tilde A_1 = \tilde A_1(p) > 0$ and $\tilde A_2 = \tilde A_2(p,q) > 0$,
\[
	\mbf E \abs{ \sum_{k=1}^N \pars{ \Delta_k - a_k} }^p \le \tilde A_1 N^{p/2} h^{p/2} \le 2^{p/2}\tilde A_1 (t-s)^{p/2}
\]
and
\[
	\mbf E \abs{ \sum_{k=1}^N (\Delta_k^q- b_k)}^p \le \tilde A_2 N^{p/2} h^{pq/2} \le 2^{p/2} \tilde A_2 (t-s)^{p/2}h^{p(q-1)/2}.
\]
The latter estimate and \eqref{h} give
\begin{align*}
	\nor{Df}{\oo}^{pq} h^{p} \mbf E \abs{ \sum_{k=1}^N (\Delta_k^q- b_k)}^p &\le 2^{p/2} \tilde A_2 \nor{Df}{\oo}^{pq} (t-s)^{p/2}h^{p(q+1)/2}\\
	&\le 2^{p/2} \tilde A_2 \nor{f}{\oo}^{p(q+1)/2} \nor{Df}{\oo}^{p(q-1)/2} (t-s)^{p \pars{ \frac{1}{2} + \frac{q+1}{2q'}} }\\
	&\le 2^{p/2} \tilde A_2 K^{p(q-1)/2} \nor{f}{\oo}^p (t-s)^{p/2},
\end{align*}
and so, raising \eqref{almostpointwisemoment} to the power $p$ and taking the expectation gives, for some $M_2 = M_2(m,K,p,q) > 0$,
\begin{align*}
	\mbf E &\left[ \sup_{\mu \in \mcl M} \pars{ Z_\mu(s,t) - M_1 (t-s)^{1/2}  }_+^p \right] \le M_2  (t-s)^{p/2}.
\end{align*}
Dividing by $(t-s)^{p/2}$ leads to \eqref{pointwisemoment}.

We now take $p$ large enough that
\[
	\kappa < \frac{1}{2} - \frac{1}{p}.
\]
Then \eqref{pointwisemoment} and Lemma \ref{L:Kolmogorov} imply that, for some $C = C(\kappa,m,K,p,q) > 0$ and $M = M(\kappa,m,K,q) > 0$, and for all $\lambda \ge 1$ and
\[
	p > \frac{2}{1 - 2\kappa},
\]
we have
\[
	\mbf P \pars{ \sup_{\mu \in \mcl M} \sup_{-1 \le s \le t \le 0} \frac{Z_\mu(s,t)}{(t-s)^\kappa} > M + \lambda} \le \frac{Cm^p}{\lambda^p}.
\]
By changing $C$ in a way that depends only on $m$ and $p$, the same can be accomplished for all $p \ge 1$. The proof is finished upon setting
\[
	\lambda_0 := 2M, \quad \mcl E := \sup_{\mu \in \mcl M} \sup_{-1 \le s \le t \le 0} \frac{Z_\mu(s,t)}{(t-s)^\kappa},
\]
and replacing $C$ with $2^p C$.
\end{proof}

\bibliography{regHJstochforced}{}
\bibliographystyle{acm}

\end{document}